\numberwithin{equation}{section}
\renewcommand{\@biblabel}[1]{#1\hfill \hspace{-0.2cm}}
\newtheorem{theorem}{Theorem}[section]
\newtheorem{lemma}[theorem]{Lemma}
\newtheorem{proposition}[theorem]{Proposition}
\theoremstyle{definition}
\newtheorem{remark}{Remark}
\newcommand{\kabs}[1]{\ensuremath{\vert#1\vert}}
\newcommand{\R}{\ensuremath{\mathbb{R}}}
\renewcommand{\d}{\mathrm{d}}
\renewcommand{\Xi}{X}
\newcommand{\RN}{\setR^N}
\begin{document}

\title[Degenerate systems of double phase type]{Partial regularity for degenerate systems of double phase type}

\author{%
Jihoon Ok, Giovanni Scilla 
   and
    Bianca Stroffolini
}




\begin{abstract} 
We study partial regularity for degenerate elliptic systems of double-phase type, where  the growth function is given by $H(x,t)=t^p+a(x)t^q$ with  $1<p\le q$ and $a(x)$ a nonnegative $C^{0,\alpha}$-continuous function. Our main result proves that if $\frac{q}{p}\le 1+\frac{\alpha}{n}$, the gradient of any weak solution is locally H\"older continuous, except on a set of measure zero.
\end{abstract}

\keywords{ double phase; degenerate system; partial regularity; harmonic approximation.}

\subjclass[2020]{
35J47, 
35J92, 
35B65, 
46E30, 
}

\maketitle


\section{Introduction} 

This article deals with nonlinear degenerate elliptic systems of double phase type: 
\begin{equation}
\quad{\rm div}\,{\bf A}(x,D\bfu)= {\bf 0}
\,\, \mbox{ in $\Omega$,}
\label{system1}
\end{equation}
where $\Omega\subset\R^n$ with $n\ge 2$ is an open set, $\bfu=(u^1,\dots,u^N)$ with $N\ge 1$, and ${\bf A}:\Omega\times\R^{N\times n}\to \R^{N\times n}$. More precisely, let  $H:\Omega\times [0,\infty)\to [0,\infty)$ be defined by \begin{equation}
H(x,t):= t^p + a(x) t^q\,,
\label{eq:H}
\end{equation}
with $a:\Omega\to \R$ and $1<p\le q$   satisfying
\begin{equation}
a\in C^{0,\alpha}(\Omega)\,, 
\quad
0\le a \le L \,,
\quad \text{and} \quad
\frac{q}{p}\leq 1+\frac{\alpha }{n}\,,
\label{eq:pq}
\end{equation}
for some $\alpha \in(0, 1]$ and $L >0$. Then the nonlinearity  
 ${\bf A}(x,\bm\xi)$ satisfies  the following double phase type growth and ellipticity conditions: 
\begin{equation}
|{\bf A}(x,\bm\xi)|+ |\bm\xi||D_\xi {\bf A}(x,\bm\xi)| \leq L H'(x, |\bm\xi|)\,, 
\tag{A1}
\label{eq:1.8ok1}
\end{equation}
\begin{equation}
\langle D_\xi {\bf A}(x,\bm\xi)\bm\lambda \, |\,\bm\lambda\rangle \geq \nu H''(x, |\bm\xi|)|\bm\lambda|^2\,,
\tag{A2}
\label{eq:1.8ok2}
\end{equation}
for every $x\in\Omega$ and $\bm\xi\in\R^{N\times n}$, $\bm\lambda \in \R^{N\times n}$ and for some $0<\nu\leq L$, where $H'(x,t)$ and $H''(x,t)$ denote the first and second derivatives of $t\to H(x,t)$, respectively, and $\langle\cdot \,|\,\cdot\rangle$ denotes the Euclidean inner product in $\R^{N\times n}$.
Note that in the region where $a(x)=0$, we have $H(x,t)=t^p$ so that $H$ has a $p$-phase, while in the region where $a(x)>0$, the function $H$ has a  $(p,q)$-phase.  In particular, if $a(x)\equiv 0$, then $H(x,t)\equiv t^p$ and thus $A(x,\xi)$ satisfies the standard $p$-growth condition.  We also note that \eqref{eq:1.8ok2} implies that for every $x\in\Omega$ and $\bm\xi_1, \bm\xi_2\in \R^{N\times n}$,
\begin{equation}
\langle {\bf A}(x,\bm\xi_1) - {\bf A}(x,\bm\xi_2) \, | \, \bm\xi_1-\bm\xi_2 \rangle \geq \tilde{\nu} H''(x, |\bm\xi_1|+|\bm\xi_2|)|\bm\xi_1-\bm\xi_2|^2
\label{eq:1.9ok}
\end{equation}
for some $\tilde \nu>0$ depending on $p$, $q$ and $\nu$.
We say that a function $\bfu \in W^{1,1}(\Omega;\R^N)$ with $H(\cdot,|D\bfu|)\in L^1(\Omega)$ is a \emph{weak solution} to \eqref{system1} if 
\begin{equation}
\int_\Omega \langle {\bf A}(x,D\bfu)\, |\,  D\bm \psi\rangle\,\mathrm{d}x = 0
\label{eq:system1}
\end{equation}
holds for all $\bm \psi\in W^{1,1}_0(\Omega;\R^N)$ with $H(\cdot,|D\bm \psi|)\in L^1(\Omega)$.

The H\"older continuity of the gradient of weak solutions for degenerate or singular elliptic systems of the form \eqref{system1} has been a long-standing and still active research topic. In the case of a single equation, i.e., $N=1$, if $H(x,t)\equiv H_0(t):=t^p+a_0 t^q$ for some constant $a_0\ge 0$, and $A(x,\xi)$ satisfies conditions \eqref{eq:1.8ok1}, \eqref{eq:1.8ok2} along with a suitable H\"older continuous condition for the $x$ variable,
 then the gradient of the weak solution to \eqref{system1} is locally H\"older continuous. See \cite{Lie}, and also \cite{Ural,Manfredi} for the case  $a_0=0$. (Note that the paper \cite{Lie} considers a more general function than $H_0(t)$.) However, this everywhere regularity result does not extend to the vectorial case, particularly when $N\ge 2$ and $n\ge 3$, even if $A(x,\xi)\equiv A_0(\xi)$ and $H(x,t)=t^2$. See \cite{mingionedark} for more discussion for vectorial problems.   

Neverthelss, if the system satisfies an isotropic structure, the gradient of the weak solution is locally H\"older continuous. Uhlenbeck \cite{Uhlenbeck} proved that  if $A(x,\bm\xi)\equiv \varrho(|\bm\xi|^2)\bm\xi$ and $\varrho(t)\sim |t|^{\frac{p-2}{2}}$ with $p>2$ ( see \cite[(1.3) and (1.4)]{Uhlenbeck} for detailed conditions on $\varrho$ which are known as the Uhlenbeck structure condition), then the gradient of the weak solution is  locally H\"older continuous. The prototype of a system satisfying the Uhlenbeck structure is the $p$-Laplace system
$$
\mathrm{div} \left(|D\bfu|^{p-2} D\bfu\right)= {\bf 0}
\quad \text{in }\ \Omega.
$$
This result was extended to the case $1<p<2$ by Tolksdorf \cite{Tolksdorf}. We remark that the $p$-Laplace system is degenerate when $p>2$, and singular when $1<p<2$, since $|D\bfu|^{p-2}$ tends to $\infty$ or $0$ as $|D\bfu|\to 0$, respectively.  Moreover, by setting $\varrho(t)=\phi'(\sqrt{t})/\sqrt{t}$, where $\phi'$ is the derivative of a given function $\phi$, we have 
$$
\mathrm{div} \left(\frac{\phi'(|D\bfu|)}{|D\bfu|} D\bfu \right)= {\bf 0}
\quad\text{in }\ \Omega,
$$
which is the Euler-Lagrange system of the isotropic energy $\int_\Omega\phi(|D\bfu|)\, \d x$.
For this system, Diening, Stroffolini and Verde \cite{DIESTROVER09}  proved  everywhere $C^{1,\alpha}$-regularity  by assuming a suitable condition on $\phi$, which generalizes the Uhlenbeck condition \cite[(1.3) and (1.4)]{Uhlenbeck}.
 
Although we cannot expect everywhere H\"older continuity of the gradient of weak solutions to general degenerate system \eqref{system1}, partial H\"older continuity, that is, H\"older continuity except on a Lebesgue measure zero set, can still be achieved by assuming a suitable condition on $A(x,\bm\xi)$ additionally. Duzaar and Mingione \cite{DUMIN04b}  first obtained partial H\"older continuity results for the gradient of weak solutions when $A(x,\bm\xi)=A_0(\bm\xi)$ and $H(x,t)\equiv t^p$. The key tool in  their proof is harmonic approximation: specifically, $\mathcal A$-harmonic and $p$-harmonic approximation. The $\mathcal A$-harmonic approximation was first used in partial regularity theory by Duzaar and Grotwoski \cite{DUGRO00}; see also \cite{DUST02}. On the other hand, the $p$-harmonic approximation was first obtained in \cite{DUMIN04}. Note that the harmonic approximation results in \cite{DUGRO00,DUMIN04} are proved by using contradiction argument. See \cite{DUMIN09} for more discussion on harmonic approximation. Later, harmonic approximations in more general settings have been obtained by Diening, Lengeler, Stroffolini and Verde \cite{DIELENSTROVER12,DIESTROVER10},  which will be introduced in Section~\ref{sec:harmonicapproximation},
where the proofs employ Lipschitz truncation argument instead of  the contradiction method.
For further results on partial regularity in degenerate systems or relevant variational problems, we refer to \cite{BeckSt13,Bo12,BoDuMin13,CeladaOk,GoodScSt22,OkScSt24}.

The  energy of the form
\begin{equation}\label{energy}
\int_\Omega|D\bfu|^p + a(x)|D\bfu|^q\,\d x\,,
\end{equation}
known as the double phase energy, and related equations and systems have been intensively studied  over the last decade, especially following sharp and comprehensive regularity results obtained in the papers of Baroni, Colombo and Mingione \cite{ColomboMingione15,ColomboMingione151,BCM15,BCM18}. The double phase energy has been first introduced by Zhikov \cite{Zhikov86,Zhikov95,Zhikov97} in the context of Homogenization and as an example exhibiting Lavrentiev phenomenon. Subsequently, Esposito, Leonetti and Mingione \cite{ELM04}, as well as Fonseca, Mal\'y and Mingione \cite{FMM04}, provided further examples of double phase problems related to Lavrentiev phenomenon, which also establish the sharpness of the condition \eqref{eq:pq} in order to obtain regularity results. In particular, the latter paper also investigates some regularity results namely, higher integrability and fractional differentiability of corresponding solutions. Finally, a sharp and maximal regularity result has been obtained in \cite{ColomboMingione15,ColomboMingione151,BCM18}. Specifically, it was shown that if $\bfu$ is a minimizer of the energy \eqref{energy} with the condition \eqref{eq:pq}, then $D\bfu\in C^{0,\gamma}_{\loc}(\Omega)$ for some $\gamma\in(0,1)$.  For further regularity results related to double phase type problems, we refer to \cite{BBK23,ByunOh17,BOS22,ColomboMingione16,cristianaMin20,cristianaOh19,cristianaPalatucci19,KKM23,OK2017,OKJFA18,OKNLA18,OKNLA20,SciStroffofluids} and the references therein.

Partial regularity for nondegenerate systems  with double phase growth has been studied in \cite{OKJFA18,OKNLA18,SciStroffofluids}, where the term `nondegenerate' for the system \eqref{system1} means  $|D_{\bm \xi}{\bf A}(x,{\bf 0})|\sim 1$. Furthermore, those papers assume the superquadratic condition, namely the smaller exponent $p$ of $H$ is larger that or equal to $2$. However, the general case of degenerate systems with double phase growth has not yet been explored.  In this paper, we consider degenerate systems of the form \eqref{system1} with double phase growth and prove partial H\"older regularity of the gradient of their weak solutions (see Theorem~\ref{thm:1:main-thm}). Notably, we do not assume that the superquadratic condition holds, and  thus a unified approach independent of the  the exponent $p$ is required.

The proof of the main theorem (Theorem~\ref{thm:1:main-thm}) is based on the harmonic approximation approach introduced in \cite{DUMIN04b}, which has been further developed for the Orlicz growth case in \cite{CeladaOk,GoodScSt22}. The double phase growth condition presents different phenomena, as it does not imply uniform ellipticity with respect to the gradient variable. Therefore, we need to develop the approach in our setting.   
Moreover, the excess functional and methodology used in \cite{OKJFA18,OKNLA18} are not working on this paper, since 
we are also dealing with the degenerate case as well as both the superquadratic ($p\ge 2$) and the subquadratic ($1<p<2$) cases at the same time. 
To handle this challenge, we introduce the following new excess functional:
\begin{equation*}
\Phi (x_0,r, {\bf Q}):  = \frac{1}{H^-_{B_r(x_0)}(|{\bf Q}|)}\dashint_{B_{r}(x_0)} |\bfV_{H^-_{B_{r}(x_0)}}(D\bfu)-\bfV_{H^-_{B_{r}(x_0)}}({\bf Q})|^2 \,\mathrm{d} x\,,
\end{equation*} 
where
\begin{equation*}
\bfV_{H^-_{B_r(x_0)}}({\bf Q}) =  \sqrt{\frac{(H^-_{B_r(x_0)})'(|{\bf Q}|)}{|{\bf Q}|}} {\bf Q}\,, \quad {\bf Q}\in\R^{N\times n}\,, \quad \mbox{ and }\,\, \,\, H^-_{B_r(x_0)}(t):= t^p + \inf_{x\in B_r(x_0)}a(x) t^q\,,\,\, t\geq0\,.
\end{equation*}
Here, the double phase function $H$ is frozen at the infimum of the modulating coefficient $a(x)$ on the ball $B_r(x_0)$.

We distinguish between degenerate and nondegenerate regimes using an alternative condition, and prove that weak solutions to \eqref{system1}, or its variations, are almost  $\phi$-harmonic with $\phi(t)=t^p+at^q$ for some $a\ge 0$, or almost $\mathcal A$-harmonic for some $\mathcal A\in R^{N^2\times n^2}$ that satisfies the Legendre-Hadamard ellipticity condition. Note that we establish almost harmonicity through a new approach which is applicable to the $p$-phase and the $(p,q)$-phase at the same time. This potentially 
simplifies known comparison arguments such as those in \cite{ColomboMingione15, ColomboMingione16, BCM18}).  
Finally we apply $\mathcal A$-harmonic and $\phi$-harmonic approximation lemmas to derive excess decay estimates. 
Furthermore, we emphasize that our new excess functional depends on the radius of the ball $B_r(x_0)$. This requires a more delicate analysis in the iteration process.

The remainder of the paper is organized as follows. In Section~\ref{sec:mainresult}, we present our main result, Theorem~\ref{thm:1:main-thm}. Section~\ref{sec:prelim} introduces the necessary notation, elementary inequalities, harmonic functions and harmonic approximation results. In section~\ref{sec:caccio_reverse}, we obtain Caccioppoli type estimates and reverse H\"older type inequalities with higher integrability estimates.  Section~\ref{sec:decayestimate} provides decay estimates for both nondegenerate and degenerate regimes and iterate those decay estimates on shrinking balls.  Finally, in Section~\ref{sec:proofmainthm}, we prove our main result Theorem~\ref{thm:1:main-thm}.

\subsection{Main result}\label{sec:mainresult}

Let us introduce the main result of the paper.
We assume additional conditions on the nonlinearity ${\bf A}(x,\bm\xi)$, in order to prove a partial regularity result. 
For the dependence on the $x$-variable, we assume that 
\begin{equation}
|{\bf A}(x_1,\bm\xi)-{\bf A}(x_2,\bm\xi)| \leq  L |x_1-x_2|^{\beta_0}\left(H'(x_1,|\bm\xi|)+ H'(x_2,|\bm\xi|)\right)  + L |a(x_1)- a(x_2)| |\bm\xi|^{q-1}\,,
\tag{A3}
\label{eq:1.11ok}
\end{equation}
for some $\beta_0\in(0,1)$, for every $x_1,x_2\in\Omega$ and every $\bm\xi\in \R^{N\times n}$. 
For the non-degenerate case, we further require
\begin{equation}
|D_\xi{\bf A}(x,\bm\xi_1)-D_\xi{\bf A}(x,\bm\xi_2 + \bm\xi_1)| \leq L \left(\frac{|\bm\xi_2|}{|\bm\xi_1|}\right)^{\beta_0} H''(x,|\bm\xi_1|)\,, \quad\mbox{whenever }\  |\bm\xi_2| \leq \frac{1}{2}|\bm\xi_1|\,,
\label{eq:1.12ok}
\tag{A4}
\end{equation}
for every $x\in\Omega$ and $\bm\xi_1, \bm\xi_2\in \R^{N\times n}$.
For the degenerate case, instead, we assume the following: for every $\delta>0$, there exists $\kappa=\kappa(\delta)>0$ such that
\begin{equation}
\left |{\bf A}(x,\bm\xi)-H'(x,|\bm\xi|)\frac{\bm\xi}{|\bm\xi|}\right| \leq \delta H'(x,|\bm\xi|)\,,
\quad 
\text{whenever }\ 0<|\bm\xi|\leq \kappa\,,
\label{eq:degenereassump}
\tag{A5}
\end{equation}
for every $x\in\Omega$ and $\bm\xi\in \R^{N\times n}$. 

\begin{theorem}\label{thm:1:main-thm}
Let $H:\Omega\times[0,\infty)\to[0,\infty)$ be defined as in \eqref{eq:H} complying with \eqref{eq:pq}, and  ${\bf A}:\Omega\times \R^{N\times n}\to \R^{N\times n}$ comply with \eqref{eq:1.8ok1}--\eqref{eq:degenereassump}.
If $\bfu \in W^{1,1}(\Omega;\R^N)$ with $H(\cdot,|D\bfu|)\in L^1(\Omega)$ is a weak solution to \eqref{system1}, then there exist $\beta=\beta(n,N,p,q,\nu,L,\alpha, [a]_{C^{0,\alpha}},\beta_0)\in(0,1)$ and an open subset $\Omega_0 \subset \Omega$ such that $\kabs{\Omega \setminus \Omega_0}  = 0$ and
\begin{equation*}
 D\bfu \in C^{0, \beta}_{\rm loc}\left(\Omega_0 ;\R^{N\times n}\right)  \,.
\end{equation*}
Moreover, $\Omega \setminus \Omega_0\subset \Sigma_1\cup\Sigma_2$ where
\begin{equation*}
\begin{split}
&\Sigma_1:=\left\{x_0\in\Omega:\,\, \liminf_{r \to 0^+} \dashint_{B_r(x_0)} \vert \bfV_{H^-_{B_r(x_0)}}(D\bfu) - (\bfV_{H^-_{B_r(x_0)}}(D\bfu))_{x_0,r} \vert^2 \, \mathrm{d}x>0\right\}\,,\\
&\Sigma_2:=\left\{x_0\in\Omega:\,\, \limsup_{r \to 0^+} \dashint_{B_r(x_0)} \vert  \bfV_{H^-_{B_r(x_0)}}(D\bfu) \vert^2 \, \mathrm{d}x  = \infty\right\}\,.
\end{split}
\end{equation*} 
where $H^-_{B_r(x_0)}(t)$ and  $\bfV_{H^-_{B_r(x_0)}}({\bf P})$ are defined as in \eqref{eq:Hpmdef}  and \eqref{eq:defV} with $\varphi(t)=H^-_{B_r(x_0)}(t)$, respectively.
\end{theorem}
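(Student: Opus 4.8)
The plan is to follow the classical harmonic-approximation scheme for partial regularity, but organized around the radius-dependent excess functional $\Phi(x_0,r,\mathbf Q)$ and the frozen function $H^-_{B_r(x_0)}$. First I would establish the basic toolbox on a fixed ball $B_r(x_0)$: a Caccioppoli-type inequality for $\bfu$ in terms of $\Phi$, and a reverse-H\"older/self-improving inequality yielding a small higher-integrability exponent for $\bfV_{H^-}(D\bfu)$. This is where the degenerate double-phase structure bites: the growth $H$ is not uniformly elliptic in $\bm\xi$, so one must carefully track the $p$-phase versus $(p,q)$-phase dichotomy via the bound $q/p\le 1+\alpha/n$, which controls how much $a$ oscillates on $B_r$ relative to $r^\alpha$ and hence keeps $H^-_{B_r}$ comparable to $H$ on the scales that matter. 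I expect this balancing — making the frozen energy a legitimate substitute for the true one while retaining quantitative constants — to be the main recurring technical obstacle.

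Next I would set up the degenerate/nondegenerate alternative. Fix $x_0$ in the complement of $\Sigma_1\cup\Sigma_2$; then along a subsequence of radii the excess is small and the averages $|(D\bfu)_{x_0,r}|$ are bounded. One compares $|(D\bfu)_{x_0,r}|$ against a threshold (related to the $\kappa(\delta)$ from \eqref{eq:degenereassump} and the smallness parameters of the harmonic approximation lemmas). In the \textbf{nondegenerate regime}, where $|(D\bfu)_{x_0,r}|$ dominates $\sqrt{\Phi}$, I would linearize: using \eqref{eq:1.12ok} the operator $\mathbf A(x,\cdot)$ near $\bm\xi_1=(D\bfu)_{x_0,r}$ is close to the constant-coefficient bilinear form $\mathcal A=D_\xi\mathbf A(x_0,(D\bfu)_{x_0,r})$, which by \eqref{eq:1.8ok2} satisfies Legendre–Hadamard; then show $\bfu$ is almost $\mathcal A$-harmonic (the $x$-dependence error is handled by \eqref{eq:1.11ok} and Hölder continuity of $a$) and invoke the $\mathcal A$-harmonic approximation lemma from \cite{DIELENSTROVER12,DIESTROVER10}. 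In the \textbf{degenerate regime}, where $|(D\bfu)_{x_0,r}|\lesssim\sqrt{\Phi}$ is small, condition \eqref{eq:degenereassump} says $\mathbf A(x,\bm\xi)$ is close to the $\phi$-Laplace operator with $\phi=H^-_{B_r(x_0)}$ on small $|\bm\xi|$; one shows $\bfu$ is almost $\phi$-harmonic and applies the $\phi$-harmonic approximation lemma. Either way the output is the same: a comparison map $\mathbf h$ (either $\mathcal A$-harmonic or $\phi$-harmonic) with good a priori excess-decay, giving, for a suitable $\theta\in(0,1)$,
\begin{equation*}
\Phi(x_0,\theta r,(D\bfu)_{x_0,\theta r}) \le C\theta^{2\sigma}\Phi(x_0,r,(D\bfu)_{x_0,r}) + C\,(\text{error terms in }r^{\beta_0},\ [a]_{C^{0,\alpha}}r^{\alpha},\ldots)\,.
\end{equation*}

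Then I would run the iteration. Because the excess functional carries the radius inside $H^-_{B_r(x_0)}$, the standard one-step estimate is not directly iterable: passing from scale $r$ to scale $\theta r$ changes the normalizing function from $H^-_{B_r}$ to $H^-_{B_{\theta r}}$. Here the condition $q/p\le 1+\alpha/n$ again enters, ensuring $H^-_{B_{\theta r}}\approx H^-_{B_r}$ up to controlled multiplicative errors absorbed into the perturbative terms, so that a modified excess $\Phi(x_0,r) + (\text{tail})$ decays geometrically. Summing the geometric series over dyadic radii gives a Morrey/Campanato-type decay $\Phi(x_0,r)\lesssim r^{2\beta}$, uniformly for $x_0$ in a neighborhood; by continuity this persists on an open set $\Omega_0$, and the complement is contained in $\Sigma_1\cup\Sigma_2$, which has measure zero by Lebesgue differentiation together with the higher-integrability bound. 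Finally, translating the decay of $\Phi$ back into oscillation decay of $D\bfu$ (via the properties of $\bfV_\varphi$ relating $|\bfV_\varphi(\mathbf P)-\bfV_\varphi(\mathbf Q)|^2$ to $|\mathbf P-\mathbf Q|$-type quantities) yields $D\bfu\in C^{0,\beta}_{\mathrm{loc}}(\Omega_0;\R^{N\times n})$. The hard part will be the bookkeeping in this iteration — simultaneously handling the radius-dependence of the excess, the two regimes possibly alternating between scales, and the subquadratic case $1<p<2$ where the $\bfV$-function estimates are less forgiving — all within a single unified argument.
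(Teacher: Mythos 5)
Your plan follows essentially the same route as the paper: Caccioppoli, higher integrability and reverse H\"older estimates for the shifted frozen function $H^-_{B_r}$, a degenerate/nondegenerate dichotomy measured by the excess against $\dashint|\bfV_{H^-_{B_r}}(D\bfu)|^2$, $\mathcal A$-harmonic approximation (with $\mathcal A$ the normalized linearization $D_\xi\mathbf A$ at the average gradient) in the nondegenerate case and $H^-_{B_r}$-harmonic approximation via \eqref{eq:degenereassump} in the degenerate case, followed by an iteration that uses the comparison estimates for $a(x)-a^-_{x_0,r}$ to pass between the radius-dependent normalizations. The one bookkeeping worry you flag --- the two regimes ``alternating between scales'' --- is resolved exactly as in the paper: once the nondegenerate smallness condition holds at some scale $\tau^{k_0}R_0$ it is propagated to all finer scales by the induction of Lemma~\ref{lemma:5:iteration-nondegenerate}, so the iteration switches regimes at most once.
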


\begin{remark}
We note from \eqref{eq:Hpmdef}, \eqref{eq:linkVphi} and \eqref{eq:equivalencebis} that
$$
\dashint_{B_r(x_0)} \vert  \bfV_{H^-_{B_r(x_0)}}(D\bfu) \vert^2 \, \mathrm{d}x  \le \dashint_{B_r(x_0)} \vert  \bfV_{H}(D\bfu) \vert^2 \, \mathrm{d}x  
$$
and
 $$
 \dashint_{B_r(x_0)} \vert \bfV_{H^-_{B_r(x_0)}}(D\bfu) - (\bfV_{H^-_{B_r(x_0)}}(D\bfu))_{x_0,r} \vert^2 \, \mathrm{d}x \lesssim \dashint_{B_r(x_0)} \vert \bfV_{H}(D\bfu) - (\bfV_{H}(D\bfu))_{x_0,r} \vert^2 \, \mathrm{d}x \,.
 $$ 
 Therefore, since $|\bfV_H(D\bfu)|^2 \sim  H(\cdot,|D\bfu|) \in L^1(\Omega)$, by Lebesgue differentiation theorem, we see that $|\Sigma_1|=|\Sigma_2|=0$.
 \end{remark}

\section{Preliminaries and auxiliary results} \label{sec:prelim}

\subsection{Basic notation}\label{sec:notation}
We denote by $\Omega$ an open bounded domain of $\R^n$. For $x_0\in\R^n$ and $r>0$, $B_r(x_0)$ is the open ball of radius $r$ centred at $x_0$. In the case $x_0=0$, we will often use the shorthand $B_r$ in place of $B_r(x_0)$. If $f\in L^1(B_r(x_0); \R^m)$, we denote the average of $f$ by
\begin{equation*}
(f)_{x_0,r}:= \dashint_{B_r(x_0)} f\,\mathrm{d}x\,.
\end{equation*}
We denote by $\R^{N\times n}$ the set of all $N\times n$ matrices. For ${\bm a}=(a^1,\dots, a^N)\in \R^N $ and $x=(x_1,\dots,x_n)\in\R^n$, we denote their tensor product by ${\bm a}\otimes x:=\{a^ix_j\}_{i,j}\in \R^{N\times n}$. For a function $\bfu \in L^1(\Omega;\R^N)$, we denote by $D\bfu$ its distributional derivative in $\R^{N\times n}$.
If $p>1$, then $p':=\frac{p}{p-1}$ denotes the H\"older conjugate exponent of $p$. If $1<p<n$, the number $p^*:=\frac{np}{n-p}$ stands for the Sobolev conjugate exponent of $p$, whereas $p^*$ is any real number larger than $p$ if $p\geq n$. $\mathbbm{1}_{U}$ is the characteristic function with respect to $U\subset \R^n$, that is, $\mathbbm{1}_{U}(x)=1$ if $x\in U$ and $\mathbbm{1}_{U}(x)=0$ if $x\not\in U$. $f \lesssim g$ means $f\le c g$ for some $c\ge 1$ depending on structure constants, and $f\sim g$ means    $f\lesssim g$ and  $g \lesssim f$.

\subsection{Some basic facts on $N$--functions} \label{sec:basicNfunctions}

We recall here some elementary definitions and basic results about Orlicz functions. The following definitions and results can be found, e.g., in \cite{Kras, Kufn, Bennett, Adams}. 


A real-valued function $\phi:[0,\infty)\to[0,\infty)$ is said to be an \emph{$N$-function} if it is convex and satisfies the following conditions: $\phi(0)=0$, $\phi$ admits the derivative $\phi'$ and this derivative is right continuous, non-decreasing and satisfies $\phi'(0) = 0$, $\phi'(t)>0$ for $t>0$, and $\lim_{t\to \infty} \phi'(t)=\infty$. 
We say that $\phi$ satisfies the \emph{$\Delta_2$-condition} if there exists $c > 0$ such that for all $t \geq 0$ holds $\phi(2t) \leq c\,\phi(t)$. 
We denote the smallest possible such constant by $\Delta_2(\phi)$. 
Since $\phi(t) \leq \phi(2t)$, the $\Delta_2$-condition is equivalent to $\phi(2t) \sim \phi(t)$.

For an $N$-function $\phi$, we assume that
\begin{equation}
p_1\leq \inf_{t>0}\frac{t\phi'(t)}{\phi(t)}\leq \sup_{t>0}\frac{t\phi'(t)}{\phi(t)}\leq p_2\,,
\label{(2.1celok)}
\end{equation}
for some $1<p_1\leq p_2 <\infty$. Furthermore, we can also assume that $\phi\in C^2((0,\infty))$ satisfies 
\begin{equation}
0<p_1-1\leq \inf_{t>0}\frac{t\phi''(t)}{\phi'(t)}\leq \sup_{t>0}\frac{t\phi''(t)}{\phi'(t)}\leq p_2-1\,.
\label{(2.1celokbis)}
\end{equation}
Note that if  $\phi$ satisfies \eqref{(2.1celokbis)}, then \eqref{(2.1celok)} holds hence we have
\begin{equation}
\phi(t) \sim t \phi'(t)
\quad\text{and}\quad
 \phi(t) \sim t^2 \phi''(t)\,, \quad t>0\, .
\label{ineq:phiast_phi_p}
\end{equation}
For instance, $\phi(t):=t^p$, $1<p<\infty$, is an $N$-function satisfying \eqref{(2.1celokbis)} with $p_1=p_2=p$.  Also, for  $H$ defined as in \eqref{eq:H} and each $x\in \Omega$, $\phi(t):=H(x,t)$ is an $N$-function satisfying \eqref{(2.1celokbis)} with $p_1=p$ and $p_2=q$. 

 We denote the Young-Fenchel-Yosida conjugate function of $\phi$ by $\phi^*(t):= \sup_{s \geq 0} (st - \phi(s))$. It is again an $N$-function; it satisfies \eqref{(2.1celok)} with $\frac{p_2}{p_2-1}$ and $\frac{p_1}{p_1-1}$ in place of $p_1$ and $p_2$, respectively. We will denote by $\Delta_2({\phi, \phi^\ast})$ constants depending on $\Delta_2(\phi)$ and $\Delta_2(\phi^*)$. Also, it is easy to check that $(\phi^\ast)^\ast = \phi$.

\begin{proposition}\label{prop:properties}
Let $\phi:[0,\infty)\to[0,\infty)$ be an $N$-function complying with \eqref{(2.1celok)}. Then 
\begin{itemize}
\item[$(a)$] the mappings
\begin{equation*}
t\in(0,\infty)\to \frac{\phi'(t)}{t^{p_1-1}}\,,\,\, \frac{\phi(t)}{t^{p_1}} \mbox{ \,\, and \,\, } t\in(0,\infty)\to \frac{\phi'(t)}{t^{p_2-1}}\,,\,\, \frac{\phi(t)}{t^{p_2}}
\end{equation*}
are nondecreasing and nonincreasing, respectively. In particular,
\begin{equation}
\begin{split}
\phi(at) & \leq a^{p_1}\phi(t)\,,\,\,\,\,\,\, \phi'(at) \leq a^{p_1-1}\phi'(t)\,, \quad 0<a<1\,, \\
\phi(bt) & \leq b^{p_2}\phi(t)\,,\,\,\,\,\,\, \phi'(bt) \leq b^{p_2-1}\phi'(t)\,,\quad b>1\,.
\end{split}
\label{eq:2.2ok}
\end{equation}
Moreover,
$$
\phi^*(at) \leq a^\frac{p_2}{p_2-1}\phi(t)\,,\quad \phi^*(bt) \leq a^\frac{p_1}{p_1-1}\phi(t) \,.
$$
\item[$(b)$] (Young's inequality) for any $\lambda\in(0,1]$ it holds that
\begin{equation}
\begin{split}
st & \leq \lambda^{-p_2+1}\phi(s) + \lambda \phi^*(t)\,, \\
st & \leq \lambda \phi(s) + \lambda^{-\frac{1}{p_1-1}} \phi^*(t)\,.
\end{split}
\label{eq:2.5ok}
\end{equation}
\item[$(c)$] there exists a constant $c=c(p_1,p_2)>1$ such that
\begin{equation}
c^{-1}\phi(t) \leq \phi^*(t^{-1}\phi(t)) \leq c \phi(t)\,.
\label{eq:2.6ok}
\end{equation}
\end{itemize}
\end{proposition}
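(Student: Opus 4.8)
The plan is to establish the three items of Proposition~\ref{prop:properties} in turn; each is a routine consequence of \eqref{(2.1celok)}---together with its pointwise strengthening \eqref{(2.1celokbis)}, which the standing conventions allow us to assume along with $\phi\in C^2((0,\infty))$---and of the Fenchel--Young inequality, so the real content is careful bookkeeping of exponents and $\Delta_2$-constants. For item $(a)$ I would argue by logarithmic differentiation: for $0<t_1<t_2$, using that $\phi$ is locally absolutely continuous on $(0,\infty)$ with $\phi>0$ there,
\[
\log\frac{\phi(t_2)}{\phi(t_1)}=\int_{t_1}^{t_2}\frac{\phi'(s)}{\phi(s)}\, \mathrm{d}s\,,
\]
and \eqref{(2.1celok)} pins the integrand between $p_1/s$ and $p_2/s$; integrating yields $(t_2/t_1)^{p_1}\le\phi(t_2)/\phi(t_1)\le(t_2/t_1)^{p_2}$, which is exactly the monotonicity of $t\mapsto\phi(t)/t^{p_1}$ and $t\mapsto\phi(t)/t^{p_2}$. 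Running the same computation with $\phi''/\phi'$ in place of $\phi'/\phi$, and using \eqref{(2.1celokbis)}, gives the monotonicity of $\phi'(t)/t^{p_1-1}$ and $\phi'(t)/t^{p_2-1}$. The scaling bounds \eqref{eq:2.2ok} are then read off by evaluating the monotone quotient at $at<t$ (for $0<a<1$) and at $bt>t$ (for $b>1$); the estimates for $\phi^*$ follow by applying \eqref{eq:2.2ok} to $\phi^*$, which satisfies \eqref{(2.1celok)} with exponents $\tfrac{p_2}{p_2-1}$ and $\tfrac{p_1}{p_1-1}$ and in particular obeys the $\Delta_2$-condition.

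For item $(b)$ I would start from the Fenchel--Young inequality in the scaled form $st\le\phi(\mu s)+\phi^*(\mu^{-1}t)$, valid for every $\mu>0$, and optimise $\mu$. Taking $\mu=\lambda^{-(p_2-1)/p_2}\ge1$, the scaling of $\phi$ gives $\phi(\mu s)\le\mu^{p_2}\phi(s)=\lambda^{-p_2+1}\phi(s)$, while the scaling of $\phi^*$ at $\mu^{-1}\le1$ gives $\phi^*(\mu^{-1}t)\le\mu^{-p_2/(p_2-1)}\phi^*(t)=\lambda\,\phi^*(t)$, which is the first inequality. For the second, take instead $\mu=\lambda^{1/p_1}\le1$, so that $\phi(\mu s)\le\mu^{p_1}\phi(s)=\lambda\,\phi(s)$ and $\phi^*(\mu^{-1}t)\le\mu^{-p_1/(p_1-1)}\phi^*(t)=\lambda^{-1/(p_1-1)}\phi^*(t)$.

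For item $(c)$ I would use the extremality identity $\phi^*(\phi'(t))=t\phi'(t)-\phi(t)$ (the supremum defining $\phi^*(\phi'(t))$ is attained at $s=t$ by convexity of $\phi$). Then \eqref{(2.1celok)} gives $t\phi'(t)-\phi(t)\in[(p_1-1)\phi(t),(p_2-1)\phi(t)]$, hence $\phi^*(\phi'(t))\sim\phi(t)$, and likewise $\phi'(t)=t^{-1}\,t\phi'(t)\sim t^{-1}\phi(t)$; since $\phi^*$ satisfies $\Delta_2$, comparable arguments have comparable $\phi^*$-values with constant depending only on $p_1,p_2$, so $\phi^*(t^{-1}\phi(t))\sim\phi^*(\phi'(t))\sim\phi(t)$, i.e.\ \eqref{eq:2.6ok}. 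None of the three parts hides a genuine difficulty; the only point that needs attention is the exponent bookkeeping in $(b)$---choosing the power of $\lambda$ in the scaling parameter $\mu$ so that the two error terms emerge with precisely the powers $\lambda^{-p_2+1},\lambda$ and $\lambda,\lambda^{-1/(p_1-1)}$ stated---and, throughout, the routine check that every appeal to $\Delta_2$ for $\phi$ or $\phi^*$ produces a constant depending on $p_1,p_2$ alone.
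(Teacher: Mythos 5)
Your proof is correct. The paper states this proposition without proof, referring to standard references on Orlicz functions, and your argument is exactly the standard one: logarithmic differentiation of $\phi$ and $\phi'$ for the monotone quotients in $(a)$, the scaled Fenchel--Young inequality with an optimized dilation parameter for $(b)$, and the extremality identity $\phi^*(\phi'(t))=t\phi'(t)-\phi(t)$ combined with $\phi'(t)\sim t^{-1}\phi(t)$ for $(c)$. You also correctly flag the one genuinely delicate point, namely that the monotonicity of $\phi'(t)/t^{p_1-1}$ and $\phi'(t)/t^{p_2-1}$ does not follow from \eqref{(2.1celok)} alone but needs the pointwise strengthening \eqref{(2.1celokbis)}, which the paper's standing conventions permit.
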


In particular, it follows from \eqref{eq:2.2ok} that both $\phi$ and $\phi^*$  satisfy the $\Delta_2$-condition with constants $\Delta_2(\phi)$ and $\Delta_2(\phi^*)$ determined by $p_1$ and $p_2$.

We will use the following lemma.
(see \cite[Lemma~20]{DIEETT08})
\begin{lemma}
\label{technisch-mu}
Let $\phi$ be a $N$-function with $\phi, \phi^*\in \Delta_2$. Then for all ${\bf{P}}_0,{\bf{P}}_1\in \mathbb{R}^{N\times n}$ with $|{\bf{P}}_0|+|{\bf{P}}_1|>0$, there holds
\begin{equation*}
 \int_0^1 \frac{\phi'(|(1-\theta){\bf{P}}_0+\theta{\bf{P}}_1|)}{|(1-\theta){\bf{P}}_0+\theta{\bf{P}}_1|}\, \mathrm{d}\theta \sim \frac{\phi'(|{\bf{P}}_0|+|{\bf{P}}_1|)}{|{\bf{P}}_0|+|{\bf{P}}_1|}\,,
\end{equation*}
where  hidden constants depend only on $\Delta_2(\phi, \phi^*).$
\end{lemma}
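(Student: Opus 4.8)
The plan is to abbreviate ${\bf P}_\theta:=(1-\theta){\bf P}_0+\theta{\bf P}_1$ and $M:=|{\bf P}_0|+|{\bf P}_1|>0$, to note that $0\le|{\bf P}_\theta|\le M$ for all $\theta\in[0,1]$, and to prove the upper and lower bounds
\[
\int_0^1 \frac{\phi'(|{\bf P}_\theta|)}{|{\bf P}_\theta|}\,\d\theta\ \lesssim\ \frac{\phi'(M)}{M}
\qquad\text{and}\qquad
\int_0^1 \frac{\phi'(|{\bf P}_\theta|)}{|{\bf P}_\theta|}\,\d\theta\ \gtrsim\ \frac{\phi'(M)}{M}
\]
separately. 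As a preliminary I would record the quantitative fact that $\phi,\phi^\ast\in\Delta_2$ forces \eqref{(2.1celok)} for some $1<p_1\le p_2<\infty$ depending only on $\Delta_2(\phi,\phi^\ast)$: since $t\phi'(t)\le\phi(2t)\le\Delta_2(\phi)\,\phi(t)$ one may take $p_2=\Delta_2(\phi)$, and the dual inequality applied to $\phi^\ast$ produces an admissible $p_1>1$. Thus Proposition~\ref{prop:properties}\,(a) is available with these exponents; in particular $t\mapsto\phi'(t)/t^{p_1-1}$ is nondecreasing, and \eqref{eq:2.2ok} gives $\phi'(\lambda t)\sim\phi'(t)$ whenever $\lambda\in[\tfrac14,1]$. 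All constants below are of this kind, and $|{\bf P}_\theta|=0$ occurs for at most one $\theta$, so the integrands are well defined a.e.

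For the lower bound I would use that one of $|{\bf P}_0|,|{\bf P}_1|$, say $|{\bf P}_1|$, is $\ge M/2$. For $\theta\in[\tfrac78,1]$ the reverse triangle inequality gives $|{\bf P}_\theta|\ge\theta|{\bf P}_1|-(1-\theta)|{\bf P}_0|\ge\tfrac78\cdot\tfrac M2-\tfrac18 M=\tfrac{5M}{16}$; together with $|{\bf P}_\theta|\le M$ this yields $|{\bf P}_\theta|\sim M$, hence $\phi'(|{\bf P}_\theta|)/|{\bf P}_\theta|\sim\phi'(M)/M$, on that subinterval. Integrating over $[\tfrac78,1]$ gives the claim.

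For the upper bound, the monotonicity of $t\mapsto\phi'(t)/t^{p_1-1}$ (equivalently \eqref{eq:2.2ok} with $a=t/M$) gives the pointwise estimate $\phi'(t)/t\le(\phi'(M)/M)\,(M/t)^{2-p_1}$ for $0<t\le M$. If $p_1\ge2$ the last factor is $\le1$ and the bound follows at once. If $1<p_1<2$, set $s:=2-p_1\in(0,1)$ and ${\bf v}:={\bf P}_1-{\bf P}_0$ (so $0\le|{\bf v}|\le M$) and distinguish two cases. If $|{\bf v}|\le M/4$, then with $|{\bf P}_1|\ge M/2$ as above one has $|{\bf P}_\theta|\ge|{\bf P}_1|-|{\bf v}|\ge M/4$ for every $\theta\in[0,1]$, so $|{\bf P}_\theta|\sim M$ uniformly and both sides are $\sim\phi'(M)/M$. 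If $|{\bf v}|>M/4$, then decomposing ${\bf P}_0$ into its components parallel and orthogonal to ${\bf v}$ yields $|{\bf P}_\theta|^2=|{\bf v}|^2(\theta-\theta_\ast)^2+\operatorname{dist}(0,\ell)^2\ge|{\bf v}|^2(\theta-\theta_\ast)^2$ for some $\theta_\ast\in\R$, where $\ell$ is the affine line through ${\bf P}_0$ and ${\bf P}_1$. Hence
\[
\int_0^1\Big(\frac{M}{|{\bf P}_\theta|}\Big)^{s}\d\theta\ \le\ M^{s}|{\bf v}|^{-s}\int_0^1|\theta-\theta_\ast|^{-s}\,\d\theta\ \le\ M^{s}|{\bf v}|^{-s}\,\frac{2^{s}}{1-s}\ \le\ \frac{8^{s}}{p_1-1}\,,
\]
where I used $\sup_{a\in\R}\int_0^1|\theta-a|^{-s}\,\d\theta=\int_{-1/2}^{1/2}|\tau|^{-s}\,\d\tau=\tfrac{2^s}{1-s}$, then $|{\bf v}|>M/4$ and $1-s=p_1-1$. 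Combining with the pointwise estimate gives $\int_0^1\phi'(|{\bf P}_\theta|)/|{\bf P}_\theta|\,\d\theta\le\tfrac{8^s}{p_1-1}\,\phi'(M)/M$.

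I expect the \emph{only real obstacle} to be the subquadratic regime $1<p_1<2$ of the upper bound: there $\phi'(t)/t$ is unbounded as $t\to0$, so a crude pointwise comparison fails and one must exploit the geometry of the segment $\theta\mapsto{\bf P}_\theta$ — namely that its distance to the origin and its speed $|{\bf v}|$ control how long $|{\bf P}_\theta|$ may stay small — through the dichotomy $|{\bf v}|\le M/4$ versus $|{\bf v}|>M/4$. The lower bound, the superquadratic case, and the bookkeeping of the $\Delta_2$-constants are all routine.
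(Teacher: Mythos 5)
Your proof is correct. The paper itself does not prove this lemma---it imports it from \cite[Lemma~20]{DIEETT08}---and your argument (extract indices $1<p_1\le p_2$ from $\Delta_2(\phi,\phi^*)$, get the lower bound from a subinterval of $\theta$'s where $|(1-\theta){\bf P}_0+\theta{\bf P}_1|\sim|{\bf P}_0|+|{\bf P}_1|$, and in the subquadratic case control the upper bound by the integrable singularity $|\theta-\theta_*|^{p_1-2}$ coming from the distance of the segment to the origin) is essentially the standard proof behind that citation; the only cosmetic point is that the clean inequality $\phi'(at)\le a^{p_1-1}\phi'(t)$ strictly uses \eqref{(2.1celokbis)}, while under \eqref{(2.1celok)} alone it holds with the harmless extra factor $p_2/p_1$ (via $\phi'(at)\le p_2\,\phi(at)/(at)\le (p_2/p_1)\,a^{p_1-1}\phi'(t)$), which your integration tolerates.
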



By $L^\phi(\Omega;\R^N)$ and $W^{1,\phi}(\Omega;\R^N)$ we denote the classical Orlicz and Orlicz--Sobolev spaces, i.\,e., $f \in L^\phi(\Omega;\R^N)$ iff $f \in L^{1}(\Omega; \R^N)$ with $\int_\Omega
\phi(|{f}|)\,dx < \infty$ and $f \in W^{1,\phi}(\Omega; \R^N)$ iff $f \in W^{1,1}(\Omega; \R^N)$  with $|f|, |D f| \in L^\phi(\Omega)$. The space $W^{1,\phi}_0(\Omega;\R^N)$ will denote the closure of $C^\infty_0(\Omega;\R^N)$ in $W^{1,\phi}(\Omega;\R^N)$. For simplicity,  we write $\|f\|_{L^\phi(\Omega)}= \||f|\|_{L^\phi(\Omega)}$ for $f\in L^\phi(\Omega;\R^{N\times n})$.

%


Another important toolset is the \emph{shifted} $N$-functions
$\{\phi_a \}_{a \ge 0}$ (see \cite{DIEETT08}). We define for $t\geq0$
\begin{equation}
  \label{eq:phi_shifted}
  \phi_a(t):= \int _0^t \phi_a'(s)\, \mathrm{d}s\qquad\text{with }\quad
  \phi'_a(t):=\phi'(a+t)\frac {t}{a+t}.
\end{equation}
Note that $\phi_a$ satisfy the $\Delta_2$-condition uniformly in $a \ge 0$, and we have the following relations:
\begin{align}
&\phi_a(t) \sim \phi'_a(t)\,t\,;  \label{(2.6a)} \\
&\phi_a(t) \sim \phi''(a+t)t^2\sim\frac{\phi(a+t)}{(a+t)^2}t^2\sim \frac{\phi'(a+t)}{a+t}t^2\,,\label{(2.6b)}\\
& \phi(a+t)\sim [\phi_a(t)+\phi(a)]\,,\label{(2.6c)} 
\end{align}
We recall also that, by virtue of \cite[Lemma~30]{DIEETT08}, uniformly in $\lambda\in[0,1]$ and $a\geq0$ holds
\begin{equation}
\phi_a^*(\lambda \phi'(a)) \sim \lambda^2 \phi(a) \,.
\label{eq:6.23dieett}
\end{equation}
  
%


We define
\begin{equation}\label{eq:defV}
\bfV_{\phi} (\bfP) := \sqrt{\frac{\phi'(|\bfP|)}{|\bfP|}} \bfP, \qquad \bfP\in \R^{N\times n}.
\end{equation}
In particular, we write $\bfV_{p} (\bfP)$ to denote $\bfV_{\phi} (\bfP)$ when  $\phi(t)=t^p$. We have that
\begin{equation}
|\bfV_{\phi} (\bfP) - \bfV_{\phi} (\bfQ)|^2 \sim \phi_{|\bfQ|}(|\bfP-\bfQ|)
\label{eq:linkVphi}
\end{equation}
holds uniformly for every $\bfP, \bfQ\in \R^{N\times n}$ (see \cite[Lemma 7]{DIELENSTROVER12}). We also recall from \cite[Lemma~A.2]{DiKaSch} that for $\bfg\in W^{1,\phi}(B_r(x_0);\R^m)$,
\begin{equation}
\dashint_{B_r(x_0)}|{\bf V}_\varphi(\bfg)-{\bf V}_\varphi((\bfg)_{x_0,r})|^2\,\mathrm{d}x \sim  \dashint_{B_r(x_0)}|{\bf V}_\varphi(\bfg)-({\bf V}_\varphi(\bfg))_{x_0,r}|^2\,\mathrm{d}x\,.
\label{eq:equivalencebis}
\end{equation}

The following version of Sobolev-Poincar\'e inequality for double phase problems has been proved in \cite[Theorem~2.13]{OK2017}.  
\begin{lemma}[Sobolev-Poincar\'e inequality]\label{thm:sob-poincare0}
Let $H$ be defined as in \eqref{eq:H} and \eqref{eq:pq}. 
Then there exists $\theta_0=\theta_0(n,p,q)\in(0,1)$ 
such that for any ${\bf w}\in W^{1,1}(\Omega;\R^N)$ and $B_r=B_r(x_0)\subset\Omega$ with $r\leq1$, we have 
$$
\dashint_{B_r} H\left(x, \frac{|{\bf w}-({\bf w})_{x_0,r}|}{r}\right)\,\mathrm{d}x\leq c(1+[a]_{C^{0,\alpha}}\|D{\bf w}\|^{q-p}_{L^p(B_r)}) \left(\dashint_{B_r} [H\left(x,{|D{\bf w}|}\right)]^{\theta_0}\,\mathrm{d}x\right)^\frac{1}{\theta_0}\,,
$$
for some $c=c(n,p,q)\geq1$. 
\end{lemma}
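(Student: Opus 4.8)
The plan is to split $H(x,t)=t^p+a(x)t^q$ and bound the two summands separately, after fixing the exponent $\theta_0=\theta_0(n,p,q):=\max\big\{\tfrac1p,\tfrac{qn}{p(n+q)}\big\}$. This $\theta_0$ lies in $(0,1)$ \emph{precisely} because \eqref{eq:pq} forces $q/p\le 1+\tfrac1n$ (making $\tfrac{qn}{p(n+q)}<1$), while $p>1$ makes $\tfrac1p<1$. With this choice $p\theta_0\ge 1$ and $q\theta_0\ge p\theta_0\ge 1$, and the Sobolev conjugates satisfy $(p\theta_0)^*\ge q$ and $(q\theta_0)^*\ge q$, with the convention of Subsection~\ref{sec:notation} when the exponent is $\ge n$ (in which case one uses Morrey's embedding instead). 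We may assume the right-hand side is finite, and, when $q>p$, also $\|D{\bf w}\|_{L^p(B_r)}<\infty$, as otherwise the right-hand side is infinite.

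For the $t^p$-summand, Sobolev--Poincar\'e with exponent $p\theta_0$, power-mean monotonicity (using $p\le(p\theta_0)^*$) and $|D{\bf w}|^p\le H(\cdot,|D{\bf w}|)$ yield
\begin{equation*}
\dashint_{B_r}\Big(\tfrac{|{\bf w}-({\bf w})_{x_0,r}|}{r}\Big)^p\,\mathrm{d}x\ \le\ c\Big(\dashint_{B_r}|D{\bf w}|^{p\theta_0}\,\mathrm{d}x\Big)^{1/\theta_0}\ \le\ c\Big(\dashint_{B_r}[H(x,|D{\bf w}|)]^{\theta_0}\,\mathrm{d}x\Big)^{1/\theta_0},\qquad c=c(n,p,q).
\end{equation*}
For the $a(x)t^q$-summand we use $\inf_{B_r}a\le a(x)\le\inf_{B_r}a+[a]_{C^{0,\alpha}}(2r)^\alpha$ on $B_r$ and split into two cases. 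If $[a]_{C^{0,\alpha}}(2r)^\alpha\le\inf_{B_r}a$, then $a(x)\sim\inf_{B_r}a$ on $B_r$; bounding $a(x)\le 2\inf_{B_r}a$, applying Sobolev--Poincar\'e with exponent $q\theta_0$, and re-absorbing $\inf_{B_r}a\le a(x)$ under the integral sign gives
\begin{equation*}
\dashint_{B_r}a(x)\Big(\tfrac{|{\bf w}-({\bf w})_{x_0,r}|}{r}\Big)^q\,\mathrm{d}x\ \le\ c\,\inf_{B_r}a\,\Big(\dashint_{B_r}|D{\bf w}|^{q\theta_0}\,\mathrm{d}x\Big)^{1/\theta_0}\ \le\ c\Big(\dashint_{B_r}[H(x,|D{\bf w}|)]^{\theta_0}\,\mathrm{d}x\Big)^{1/\theta_0}
\end{equation*}
with $c=c(n,p,q)$ independent of $[a]_{C^{0,\alpha}}$ (if $\inf_{B_r}a=0$ in this case, the $q$-term vanishes identically, since then $[a]_{C^{0,\alpha}}=0$).

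In the complementary case $[a]_{C^{0,\alpha}}(2r)^\alpha>\inf_{B_r}a$ one has $a(x)\le 2[a]_{C^{0,\alpha}}(2r)^\alpha$ on $B_r$, so the $q$-summand is at most $c[a]_{C^{0,\alpha}}r^\alpha\dashint_{B_r}(|{\bf w}-({\bf w})_{x_0,r}|/r)^q\,\mathrm{d}x$. Applying Sobolev--Poincar\'e with exponent $p\theta_0$ (here $(p\theta_0)^*\ge q$ is used), splitting $\tfrac{q}{p\theta_0}=\tfrac{q-p}{p\theta_0}+\tfrac1{\theta_0}$, and using $\dashint_{B_r}|D{\bf w}|^{p\theta_0}\le\big(\dashint_{B_r}|D{\bf w}|^p\big)^{\theta_0}$ together with $\dashint_{B_r}|D{\bf w}|^p=(\omega_nr^n)^{-1}\|D{\bf w}\|_{L^p(B_r)}^p$, we get
\begin{equation*}
\dashint_{B_r}\Big(\tfrac{|{\bf w}-({\bf w})_{x_0,r}|}{r}\Big)^q\,\mathrm{d}x\ \le\ c\,(\omega_nr^n)^{-(q-p)/p}\,\|D{\bf w}\|_{L^p(B_r)}^{q-p}\,\Big(\dashint_{B_r}|D{\bf w}|^{p\theta_0}\,\mathrm{d}x\Big)^{1/\theta_0}.
\end{equation*}
Multiplying by $[a]_{C^{0,\alpha}}r^\alpha$ produces the factor $r^{\alpha-n(q-p)/p}$, whose exponent is nonnegative \emph{exactly} because $q/p\le 1+\alpha/n$; since $r\le 1$, this factor is $\le 1$. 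Since $(\dashint_{B_r}|D{\bf w}|^{p\theta_0})^{1/\theta_0}\le(\dashint_{B_r}[H(x,|D{\bf w}|)]^{\theta_0})^{1/\theta_0}$, summing the $t^p$-contribution and the two cases of the $a(x)t^q$-contribution yields the claim, with $c=c(n,p,q)$ and the stated factor $1+[a]_{C^{0,\alpha}}\|D{\bf w}\|_{L^p(B_r)}^{q-p}$.

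The main obstacle is the joint calibration of $\theta_0$: it must be $<1$ for the inequality to be a genuine reverse-H\"older improvement over $\dashint_{B_r}H(\cdot,|D{\bf w}|)\,\mathrm{d}x$, yet large enough that $W^{1,p\theta_0}\hookrightarrow L^q$; this window is nonempty exactly under the sharp condition in \eqref{eq:pq}, and the nonnegativity of the $r$-exponent in the last display is precisely where the growth gap $q-p$ is traded for the H\"older exponent $\alpha$ of $a$. The borderline regime $q\theta_0\ge n$ (where one uses Morrey's embedding) and the degenerate case $a\equiv 0$ on $B_r$ (where only the classical inequality is needed) do not affect this structure.
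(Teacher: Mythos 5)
Your proof is correct. Note that the paper does not prove this lemma itself but imports it from \cite[Theorem~2.13]{OK2017}; your argument reconstructs essentially the standard proof behind that citation (going back to Colombo--Mingione): split $H$ into its $p$- and $q$-phases, compare $a(x)$ with $\inf_{B_r}a$ via the H\"older seminorm, absorb the $\inf_{B_r}a$-part back under the integral after a Sobolev--Poincar\'e step at level $q\theta_0$, and in the complementary regime trade the factor $r^\alpha$ against the dimensional loss $r^{-n(q-p)/p}$ coming from H\"older's inequality, which is nonnegative exactly under $\tfrac qp\le 1+\tfrac\alpha n$. The choice $\theta_0=\max\{\tfrac1p,\tfrac{qn}{p(n+q)}\}$ and the verification that $(p\theta_0)^*\ge q$ are exactly the right calibration, and the reduction to the case where the right-hand side is finite correctly disposes of the integrability issues for $w\in W^{1,1}$.
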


Let $x^-_{x_0,r}, x^+_{x_0,r}\in \overline{B_r(x_0)}$ be such that
\begin{equation*}
a^-_{x_0,r}:=a(x^-_{x_0,r}) = \inf_{x\in B_r(x_0)}a(x)
\quad\text{and}\quad
a^+_{x_0,r}:=a(x^+_{x_0,r}) = \sup_{x\in B_r(x_0)}a(x)\,. 
\end{equation*}
Then we write
\begin{equation}
\label{eq:Hpmdef}
H^-_{B_r(x_0)}(t):=H(x^-_{x_0,r},t)
\quad\text{and}\quad
H^+_{B_r(x_0)}(t):=H(x^+_{x_0,r},t) \,. 
\end{equation}

  In order to obtain a Sobolev-Poincar\'e inequality for the shifted function $H_{|{\bf Q}|}$ we need an \emph{a priori} higher integrability assumption on the gradient. 
%
\begin{lemma}[Sobolev-Poincar\'e inequality]\label{thm:sob-poincare}
Let $H$ be defined as in \eqref{eq:H} and \eqref{eq:pq}, and let ${\bf Q}\in\R^{N\times n}$, with ${\bf Q}\neq {\bf 0}$. 
Then there exist $\theta=\theta(n,p,q)\in(0,1)$ 
such that for any ${\bf w}\in W^{1,1}(\Omega;\R^N)$ with $D {\bf w} \in L^{p(1+s_0)}_{\loc}(\Omega)$ for some $s_0>0$, and $B_r\Subset\Omega$ with $r\leq1$ satisfying $\|D {\bf w}  \|_{L^{p(1+s_0)}(B_r)}\le 1$, we have 
$$
\dashint_{B_r} H_{|{\bf Q}|}\left(x, \frac{|{\bf w}-({\bf w})_{B_r}|}{r}\right)\,\mathrm{d}x\leq c 
\left(\dashint_{B_r} \left[(H^-_{B_r})_{|{\bf Q}|}(|D{\bf w}|)\right]^{\theta}\,\mathrm{d}x\right)^\frac{1}{\theta} +c (r^{\alpha(s_0)} + r^\alpha|{\bf Q}|^{q-p})|\bfQ|^p\,,
$$
for some $c=c(n,p,q,\alpha,[a]_{C^{0,\alpha}})\geq1$, where 
$\alpha(s_0):= \alpha-\frac{(q-p)n}{p(1+s_0)} >0$.
\end{lemma}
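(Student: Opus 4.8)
The plan is to reduce the shifted Sobolev--Poincaré inequality for $H_{|{\bf Q}|}$ to the unshifted one from Lemma~\ref{thm:sob-poincare0} (applied with a suitable frozen double phase function) together with the algebraic relations \eqref{(2.6a)}--\eqref{(2.6c)} for shifted $N$-functions and the Hölder continuity of $a$. First I would split the argument according to whether $r^\alpha|{\bf Q}|^{q-p}$ is small or large, i.e. compare $a^-_{B_r}|{\bf Q}|^{q-p}$ with $|{\bf Q}|^{q-p}\cdot 1$; this dichotomy determines whether the $p$-phase or the $(p,q)$-phase dominates $H^-_{B_r}$ near the scale $|{\bf Q}|$, and the constants produced will be absorbed into the error term $c(r^{\alpha(s_0)}+r^\alpha|{\bf Q}|^{q-p})|{\bf Q}|^p$. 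In both regimes the key is to pass from $H_{|{\bf Q}|}(x,\cdot)$ to $(H^-_{B_r})_{|{\bf Q}|}(\cdot)$ on the right-hand side, which introduces the oscillation $[a]_{C^{0,\alpha}}r^\alpha$ times a $|{\bf Q}|^{q-p}$- or $|D{\bf w}|^{q-p}$-weighted term; the former is exactly the $r^\alpha|{\bf Q}|^{q-p}|{\bf Q}|^p$ contribution, and the latter is handled by the a priori higher integrability $\|D{\bf w}\|_{L^{p(1+s_0)}(B_r)}\le 1$ via Hölder's inequality, which is what produces the exponent $\alpha(s_0)=\alpha-\frac{(q-p)n}{p(1+s_0)}$ (note $\alpha(s_0)>0$ is equivalent to $\frac{q}{p}<1+\frac{\alpha(1+s_0)}{n}$, a strict improvement on \eqref{eq:pq} made possible precisely by the extra integrability $s_0$).

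The concrete steps I would carry out: (i) By \eqref{(2.6c)} applied to $\varphi(t)=H(x,t)$, write $H(x,|{\bf Q}|+t)\sim H_{|{\bf Q}|}(x,t)+H(x,|{\bf Q}|)$, and similarly for $H^-_{B_r}$; this lets me trade shifted functions for ordinary ones at the cost of additive $H(x,|{\bf Q}|)$-terms. (ii) Apply Lemma~\ref{thm:sob-poincare0} with the frozen function $H^-_{B_r}$ (which satisfies \eqref{eq:H}--\eqref{eq:pq} with the constant coefficient $a^-_{x_0,r}$) to the function ${\bf w}-({\bf w})_{B_r}$, bounding $\dashint_{B_r}H^-_{B_r}(|{\bf w}-({\bf w})_{B_r}|/r)\,\mathrm{d}x$ by a power-$\theta_0$ reverse-type average of $H^-_{B_r}(|D{\bf w}|)$, where the prefactor $1+[a^-_{x_0,r}]_{C^{0,\alpha}}\|D{\bf w}\|^{q-p}_{L^p(B_r)}$ is controlled because $\|D{\bf w}\|_{L^p(B_r)}\le\|D{\bf w}\|_{L^{p(1+s_0)}(B_r)}\le 1$. (iii) Control the difference $H_{|{\bf Q}|}(x,t)-(H^-_{B_r})_{|{\bf Q}|}(t)$ pointwise: using \eqref{(2.6b)} and the definition of $H$, this difference is $\lesssim (a(x)-a^-_{x_0,r})$ times a term comparable to $(|{\bf Q}|+t)^{q-2}t^2$, and $0\le a(x)-a^-_{x_0,r}\le [a]_{C^{0,\alpha}}r^\alpha$ on $B_r$; splitting $(|{\bf Q}|+t)^{q-2}t^2\lesssim |{\bf Q}|^{q-2}t^2+t^q$ and integrating with Hölder against the $L^{p(1+s_0)}$-bound on $D{\bf w}$ (for the $t=|D{\bf w}|$ contribution) converts the second piece into the $r^{\alpha(s_0)}|{\bf Q}|^p$ error after Young's inequality \eqref{eq:2.5ok} to reabsorb $|{\bf Q}|^{q-2}|D{\bf w}|^2$-type cross terms. (iv) Finally, convert the additive $H(x,|{\bf Q}|)$ and $H^-_{B_r}(|{\bf Q}|)$ terms arising in (i): their difference is again bounded by $[a]_{C^{0,\alpha}}r^\alpha|{\bf Q}|^q=r^\alpha|{\bf Q}|^{q-p}\cdot|{\bf Q}|^p$, and the leftover $H^-_{B_r}(|{\bf Q}|)$ gets absorbed into $\dashint_{B_r}(H^-_{B_r})_{|{\bf Q}|}(|D{\bf w}|)\,\mathrm{d}x$ plus $|{\bf Q}|^p$-terms via the relation $(H^-_{B_r})_{|{\bf Q}|}(|D{\bf w}|)+H^-_{B_r}(|{\bf Q}|)\sim H^-_{B_r}(|{\bf Q}|+|D{\bf w}|)$ from \eqref{(2.6c)}, noting $r^{\alpha(s_0)}\ge r^\alpha$ is false in general so one keeps both $r^{\alpha(s_0)}$ and $r^\alpha|{\bf Q}|^{q-p}$ explicitly in the error as stated.

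The main obstacle I anticipate is step (iii)--(iv): the interplay between the shift parameter $|{\bf Q}|$, the frozen coefficient $a^-_{x_0,r}$, and the genuine coefficient $a(x)$ must be tracked carefully so that every error term lands cleanly inside $c(r^{\alpha(s_0)}+r^\alpha|{\bf Q}|^{q-p})|{\bf Q}|^p$ and nothing worse (e.g. no term with $|{\bf Q}|^q$ without a compensating small factor, and no term with $\|D{\bf w}\|_{L^p}$ to a power that the higher-integrability bound cannot absorb). In particular, verifying that the threshold exponent is exactly $\alpha(s_0)=\alpha-\frac{(q-p)n}{p(1+s_0)}$ — and that this is positive under \eqref{eq:pq} with any $s_0>0$ — is the delicate bookkeeping step; it rests on optimizing the Hölder exponent when estimating $\dashint_{B_r}r^\alpha|D{\bf w}|^q\,\mathrm{d}x$ against $\big(\dashint_{B_r}|D{\bf w}|^{p(1+s_0)}\,\mathrm{d}x\big)^{1/(1+s_0)}$, which scales like $r^{-q+p(1+s_0)\cdot\frac{1}{1+s_0}\cdot\frac{q}{p}\cdots}$ — precisely the computation I would do explicitly. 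Everything else is a routine combination of Lemma~\ref{thm:sob-poincare0}, the shifted-function calculus \eqref{(2.6a)}--\eqref{(2.6c)}, \eqref{(2.6b)}, and Young's inequality.
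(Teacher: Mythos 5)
Your pointwise comparison in step (iii) --- bounding $H_{|{\bf Q}|}(x,t)-(H^-_{B_r})_{|{\bf Q}|}(t)$ by $[a]_{C^{0,\alpha}}r^\alpha$ times $(t+|{\bf Q}|)^{q-2}t^2\lesssim t^q+|{\bf Q}|^q$ --- and your H\"older interpolation against $\|D{\bf w}\|_{L^{p(1+s_0)}(B_r)}\le 1$ to produce the exponent $\alpha(s_0)$ are exactly what the paper does, and that bookkeeping is sound. The genuine gap is in steps (i)--(ii)/(iv): you cannot obtain the shifted Sobolev--Poincar\'e inequality
\begin{equation*}
\dashint_{B_r}(H^-_{B_r})_{|{\bf Q}|}\Big(\tfrac{|{\bf w}-({\bf w})_{B_r}|}{r}\Big)\,\mathrm{d}x
\lesssim \Big(\dashint_{B_r}\big[(H^-_{B_r})_{|{\bf Q}|}(|D{\bf w}|)\big]^{\theta}\,\mathrm{d}x\Big)^{1/\theta}
\end{equation*}
from the unshifted Lemma~\ref{thm:sob-poincare0} via the identification $\phi_{|{\bf Q}|}(t)+\phi(|{\bf Q}|)\sim\phi(|{\bf Q}|+t)$ of \eqref{(2.6c)}. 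Unshifting the left-hand side costs nothing, since $\phi_{|{\bf Q}|}(s)\le\phi(|{\bf Q}|+s)$; but after applying the unshifted inequality you must re-shift the right-hand side, and \eqref{(2.6c)} only yields $\phi(|{\bf Q}|+|D{\bf w}|)\gtrsim \phi_{|{\bf Q}|}(|D{\bf w}|)$ at the price of \emph{adding} a term of size $H^-_{B_r}(|{\bf Q}|)\ge|{\bf Q}|^p$, with nothing on the left to cancel it against. Your step (iv) concedes this "leftover $H^-_{B_r}(|{\bf Q}|)$" and proposes to absorb it into $\dashint(H^-_{B_r})_{|{\bf Q}|}(|D{\bf w}|)\,\mathrm{d}x$ plus "$|{\bf Q}|^p$-terms", but a bare $c\,|{\bf Q}|^p$ without the small prefactor $r^{\alpha(s_0)}+r^\alpha|{\bf Q}|^{q-p}$ is precisely what the statement forbids: the error must vanish as $r\to0$, because the lemma feeds the Caccioppoli--Gehring argument of Lemma~\ref{lem:high}, where the extra term has to be a small perturbation of $H^+_{B_{2r}}(|{\bf Q}|)$.

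The fix is to invoke the Sobolev--Poincar\'e inequality for shifted $N$-functions directly (Diening--Ettwein \cite[Theorem~7]{DIEETT08}), applied to the frozen, $x$-independent $N$-function $(H^-_{B_r})_{|{\bf Q}|}$, which satisfies \eqref{(2.1celok)} with constants depending only on $p,q$, uniformly in the shift; Lemma~\ref{thm:sob-poincare0} is not the right tool here. This is what the paper does. With that replacement the rest of your outline goes through: the pointwise splitting, the $r^\alpha t^q$ piece handled by the ordinary Sobolev--Poincar\'e inequality with exponent $q_*<p$ followed by H\"older against the $L^{p(1+s_0)}$ bound, and the final conversion $t^p\lesssim (t^p)_{|{\bf Q}|}+|{\bf Q}|^p$. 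The case distinction on the size of $r^\alpha|{\bf Q}|^{q-p}$ that you propose at the outset is unnecessary.
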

\begin{proof}
Since
$$\begin{aligned}
H_{|\bfQ|}(x,t) & \sim  (t+|\bfQ|)^{p-2}t^2 + a(x)(t+|\bfQ|)^{q-2}t^2 \\
& \lesssim (t+|\bfQ|)^{p-2}t^2 + a^-_{B_r}(t+|\bfQ|)^{q-2}t^2 + r^\alpha(t+|\bfQ|)^{q-2}t^2\\
& \lesssim  (H_{B_r}^-)_{|\bfQ|}(t) + r^\alpha(t^q+|\bfQ|^q),
\end{aligned}$$
we can estimate, by using the Sobolev-Poincarè inequality for shifted $N$-function $(H_{B_r}^-)_{|\bfQ|}(t)$ (see, e.g., \cite[Theorem 7]{DIEETT08}) and for function $\varphi(t)=t^q$, 
$$\begin{aligned}
\dashint_{B_r} H_{|{\bf Q}|}\left(x, \frac{|{\bf w}-({\bf w})_{B_r}|}{r}\right)\,\mathrm{d}x
&\lesssim \dashint_{B_r} (H^-_{B_r})_{|{\bf Q}|}\left(\frac{|{\bf w}-({\bf w})_{B_r}|}{r}\right)\,\mathrm{d}x + r^\alpha \dashint_{B_r} \left(\frac{|{\bf w}-({\bf w})_{B_r}|}{r}\right)^q\,\mathrm{d}x+r^\alpha |{\bf Q}|^q \\
&\lesssim  \left(\dashint_{B_r} \left[(H^-_{B_r})_{|{\bf Q}|}(|D{\bf w}|)\right]^{\theta_1}\,\mathrm{d}x\right)^\frac{1}{\theta_1}+ r^\alpha\left(\dashint_{B_r} |D{\bf w}|^{q_*}\, \mathrm{d}x\right)^{\frac{q}{q_*}} + r^\alpha|{\bf Q}|^q,
\end{aligned}$$
where $\theta_1\in(0,1)$ and $q_*:=\min\{1,\frac{nq}{n+q}\}<p$. Set  $\theta:=\max\{\theta_1,q_*/p\}\in(0,1)$. Note that by H\"older's inequality
$$\begin{aligned}
r^\alpha\left(\dashint_{B_r} |D{\bf w}|^{q_*}\, \mathrm{d}x\right)^{\frac{q}{q_*}}
& \le r^\alpha\left(\dashint_{B_r} |D{\bf w}|^{p\theta}\, \mathrm{d}x\right)^{\frac{1}{\theta}} \left(\dashint_{B_r} |D{\bf w}|^{p(1+s_0)}\, \mathrm{d}x\right)^{\frac{q-p}{p(1+s_0)}}\\
&\le r^{\alpha-\frac{(q-p)n}{p(1+s_0)}} \|D{\bf w}\|_{L^{p(1+s_0)}(B_r)}^{q-p}\left(\dashint_{B_r} |D{\bf w}|^{p\theta}\, \mathrm{d}x\right)^{\frac{1}{\theta}}\,.
\end{aligned}
$$
Using this and the facts that $\|D{\bf w}\|_{L^{p(1+s_0)}(B_r)}\le 1$ and $\phi(t)\lesssim \phi_{|\bfQ|}(t)+\phi(|\bfQ|)$ with $\phi(t)=t^p$, we obtain 
$$\begin{aligned}
\dashint_{B_r} H_{|{\bf Q}|}\left(x, \frac{|{\bf w}-({\bf w})_{B_r}|}{r}\right)\,\mathrm{d}x
&\lesssim  \left(1+ r^{\alpha-\frac{(q-p)n}{p(1+s_0)}}  \right)\left(\dashint_{B_r} \left[(H^-_{B_r})_{|{\bf Q}|}(|D{\bf w}|)\right]^{\theta}\,\mathrm{d}x\right)^\frac{1}{\theta}\\
&\qquad +r^{\alpha-\frac{(q-p)n}{p(1+s_0)}}   |\bfQ|^{p} + r^\alpha|{\bf Q}|^q.
\end{aligned}
$$
This completes the proof.
\end{proof}

{The following lemma is useful to derive a higher integrability result. It is a variant of the results by Gehring~\cite{Gehring} and Giaquinta-Modica~\cite[Theorem~6.6]{giustibook}.
\begin{lemma}\label{lem:gehring}
Let $B_0\subset\R^n$ be a ball, $f\in L^1(B_0)$, and $g\in L^{s_0}(B_0)$ for some $s_0>1$. Assume that for some $\gamma\in(0,1)$, $c_1>0$ and all balls $B$ with $2B\subset B_0$
\begin{equation*}
\dashint_B |f|\,\mathrm{d}x\leq c_1 \left(\dashint_{2B}|f|^\gamma\,\mathrm{d}x\right)^{1/\gamma} + \dashint_{2B}|g|\,\mathrm{d}x\,.
\end{equation*}
Then there exist $s_1>1$ and $c_2>1$ such that $g\in L^{s_1}_{\rm loc}(B)$ and for all $s_2\in[1,s_1]$
\begin{equation*}
\left(\dashint_{B}|f|^{s_2}\,\mathrm{d}x\right)^{1/{s_2}}\leq c_2 \dashint_{2B}|f|\,\mathrm{d}x + c_2 \left(\dashint_{2B}|g|^{s_2}\,\mathrm{d}x\right)^{1/{s_2}}\,.
\end{equation*}
\end{lemma}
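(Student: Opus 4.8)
The plan is to run a standard Gehring-type self-improving argument for reverse H\"older inequalities, adapted so that the ``inhomogeneous'' term $g$ is preserved with the \emph{same} exponent $s_2$ on both sides. First I would reduce to a truncation/level-set analysis. Fix a ball $B_1$ with $2B_1\subset B_0$; by a scaling and translation we may work on a fixed ball, and by replacing $f,g$ with $|f|,|g|$ assume they are nonnegative. The hypothesis is a reverse H\"older inequality with the gain exponent $\gamma<1$ on the right; the key consequence, via a Calder\'on--Zygmund covering argument on super-level sets $\{f>\lambda\}$ (intersected with a slightly smaller ball), is a ``good-$\lambda$''/weak reverse estimate of the form
\begin{equation*}
\int_{\{f>\lambda\}\cap B} f\,\mathrm{d}x \lesssim \lambda^{1-\gamma}\int_{\{f>c\lambda\}\cap 2B} f^{\gamma}\,\mathrm{d}x + \int_{\{g>c\lambda\}\cap 2B} g\,\mathrm{d}x
\end{equation*}
for $\lambda$ above a threshold $\lambda_0$ comparable to the average of $f$ over $B_0$. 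This is exactly the structure in \cite[Theorem~6.6]{giustibook}: the first term carries the self-improving gain (since $\gamma<1$ makes the exponent of $\lambda$ positive), and the second term is linear in $g$, so it will ultimately be absorbed into a term involving $g^{s_2}$ without loss of exponent.

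Next I would multiply the good-$\lambda$ inequality by $\lambda^{s_2-2}$ and integrate in $\lambda$ over $(\lambda_0,\infty)$ (equivalently, use the layer-cake / distribution function identity $\int f^{s_2} = s_2\int \lambda^{s_2-1}|\{f>\lambda\}|\,\mathrm{d}\lambda$ together with Fubini). The first term produces $c\,s_2\int \lambda^{s_2-\gamma-1}\int_{\{f>c\lambda\}}f^\gamma$, and after swapping the order of integration this becomes a constant times $\tfrac{1}{s_2-\gamma}\int f^{s_2}$ — this is where one must choose $s_2$ close enough to $1$ that the resulting constant $C(\gamma,n,c_1)\cdot\tfrac{1}{s_2-\gamma}$ times the geometric loss from passing between $B$ and $2B$ is strictly less than $1$, so the $f^{s_2}$ term can be absorbed into the left-hand side. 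The second term produces a constant times $\int g^{s_2}$ directly. The contribution of the threshold $\lambda_0$ (the part of the integral $\lambda<\lambda_0$, where there is no gain) produces $\lambda_0^{s_2}|B_0| \sim (\dashint_{2B} f)^{s_2}$ type terms; these are handled by a standard iteration on a nested family of balls (Giaquinta's ``hole-filling'' lemma on increasing radii) and ultimately contribute the $c_2\dashint_{2B}f$ term after taking the $1/s_2$ root. Collecting everything and taking the $s_2$-th root gives the claimed inequality, with $s_1>1$ the largest admissible exponent from the absorption condition and $c_2$ depending on $n,\gamma,c_1,s_1$; monotonicity of $L^{s_2}$ norms then extends the conclusion to all $s_2\in[1,s_1]$. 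The fact that $g\in L^{s_0}$ with $s_0>1$ guarantees the right-hand side is finite once $s_1\le s_0$ is imposed, and upgrades $g$ (hence $f$) to $L^{s_1}_{\mathrm{loc}}$.

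The main obstacle I anticipate is bookkeeping the passage between concentric balls of radii $r$ and $2r$ cleanly: the hypothesis couples $B$ and $2B$, so the iteration must be set up on a Vitali-type family or on radii $r_j = R(1+2^{-j})$ so that the constants do not blow up, and one must verify that the absorption constant stays below $1$ uniformly. A secondary technical point is that the covering/stopping-time argument for the good-$\lambda$ inequality requires comparing $f$-level sets with $f^\gamma$-level sets and $g$-level sets simultaneously, so one should split $\{f>c\lambda\}$ according to whether the ``bad mass'' comes from $f$ or from $g$ and route each piece to the correct term — this is routine but is the place where a careless split would lose the exponent on $g$. Everything else is the verbatim Gehring argument; the only deviation from \cite{Gehring} is that the additive term $g$ here is an $L^{s_0}$ function rather than being a power of $f$, which is precisely why the statement asserts $g\in L^{s_1}_{\mathrm{loc}}$ as a conclusion and keeps $g$ with exponent $s_2$ on the right.
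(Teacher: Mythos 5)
The paper offers no proof of this lemma at all --- it is stated as a known variant of Gehring's lemma with a citation to Gehring and to Giaquinta--Modica \cite[Theorem~6.6]{giustibook} --- and your proposal is exactly the standard proof of that cited result (good-$\lambda$ inequality via a Calder\'on--Zygmund stopping-time argument, integration in $\lambda$ with Fubini, absorption for $s_2$ near $1$, and a hole-filling iteration for the sub-threshold part), so the approaches coincide. One correction to your bookkeeping: the smallness that makes the absorption work comes from the prefactor $(s_2-1)$ in the identity $\int_{\{f>\lambda_0\}}f^{s_2}\,\mathrm{d}x=\lambda_0^{s_2-1}h(\lambda_0)+(s_2-1)\int_{\lambda_0}^{\infty}\lambda^{s_2-2}h(\lambda)\,\mathrm{d}\lambda$ with $h(\lambda)=\int_{\{f>\lambda\}}f\,\mathrm{d}x$, yielding an absorbable coefficient of order $(s_2-1)/(s_2-\gamma)\to0$ as $s_2\to1^+$, and not from the factor $1/(s_2-\gamma)$ you display, which only tends to $1/(1-\gamma)>1$; with that fix the absorption step closes as you claim.
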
}

We conclude this section with the following useful lemma about an almost concave condition, see \cite[Lemma~2.2]{OKJFA18}.

\begin{lemma}
Let $\Psi:[0,\infty)\to[0,\infty)$ be non-decreasing and such that $t\to \frac{\Psi(t)}{t}$ be non-increasing. Then there exists a concave function $\widetilde{\Psi}: [0,\infty)\to[0,\infty)$ such that
\begin{equation*}
\frac{1}{2} \widetilde{\Psi}(t) \leq \Psi(t) \leq \widetilde{\Psi}(t) \quad \mbox{ for all $t\geq0$. }
\end{equation*}
\label{lem:lemma2.2ok}
\end{lemma}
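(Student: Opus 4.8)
The plan is to let $\widetilde\Psi$ be essentially the least concave majorant of $\Psi$ and to show that it already lies below $2\Psi$. Concretely, for $t>0$ I would set
\[
\widetilde\Psi(t):=\inf\bigl\{\alpha t+\beta \ :\ \alpha,\beta\ge 0,\ \ \alpha s+\beta\ge\Psi(s)\ \text{ for all }s\ge 0\bigr\},
\]
and put $\widetilde\Psi(0):=\Psi(0)$. The very first thing to check is that the family over which the infimum is taken is non-empty (so that $\widetilde\Psi$ is finite): since $\Psi$ is non-decreasing we have $\Psi(s)\le\Psi(1)$ for $0\le s\le 1$, and since $s\mapsto\Psi(s)/s$ is non-increasing we have $\Psi(s)=\tfrac{\Psi(s)}{s}\,s\le\Psi(1)\,s$ for $s\ge 1$; hence $\ell(s):=\Psi(1)(1+s)$ is an admissible affine majorant of $\Psi$ on $[0,\infty)$.

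Next I would record the structural properties of $\widetilde\Psi$. It is concave: on $(0,\infty)$ it is an infimum of affine functions, and an infimum of affine functions is concave; moreover, the only effect of resetting its value at the endpoint $t=0$ to $\Psi(0)$ is to \emph{decrease} it there (one has $\Psi(0)\le\Psi(0^+)\le\widetilde\Psi(0^+)$), which only helps the concavity inequalities in which $0$ participates. The lower bound $\Psi\le\widetilde\Psi$ is immediate, since every competitor $\alpha s+\beta$ dominates $\Psi$, and at $t=0$ we have equality by construction; in particular $\widetilde\Psi\ge 0$.

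For the upper bound $\widetilde\Psi\le 2\Psi$, fix $t_0>0$ and consider the single affine function
\[
\ell_{t_0}(s):=\Psi(t_0)+\frac{\Psi(t_0)}{t_0}\,s .
\]
If $0\le s\le t_0$ then $\Psi(s)\le\Psi(t_0)\le\ell_{t_0}(s)$, using that $\Psi$ is non-decreasing and that $\ell_{t_0}$ is non-decreasing with $\ell_{t_0}(0)=\Psi(t_0)$; if $s\ge t_0$ then $\Psi(s)=\tfrac{\Psi(s)}{s}\,s\le\tfrac{\Psi(t_0)}{t_0}\,s\le\ell_{t_0}(s)$, using that $s\mapsto\Psi(s)/s$ is non-increasing. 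Thus $\ell_{t_0}$ is admissible in the infimum defining $\widetilde\Psi(t_0)$, whence $\widetilde\Psi(t_0)\le\ell_{t_0}(t_0)=2\Psi(t_0)$; the case $t_0=0$ is trivial since $\widetilde\Psi(0)=\Psi(0)$. Combining, $\tfrac12\widetilde\Psi(t)\le\Psi(t)\le\widetilde\Psi(t)$ for all $t\ge0$.

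The argument is elementary and I do not expect a genuine obstacle; the only points deserving a line of care are the non-emptiness/finiteness of the defining infimum and the behaviour at $t=0$ (if $\Psi$ jumps at the origin, the plain infimum of affine majorants overshoots $\Psi(0)$, which is why $\widetilde\Psi(0)$ is prescribed separately). As an equivalent alternative one could instead define $\widetilde\Psi(t)$ as the concave hull $\sup\{\lambda\Psi(a)+(1-\lambda)\Psi(b):\lambda\in[0,1],\ a,b\ge0,\ \lambda a+(1-\lambda)b=t\}$; reducing to the case $a\le t\le b$ and using $\Psi(a)\le\Psi(t)$, $\Psi(b)\le\tfrac{b}{t}\Psi(t)$ together with $(1-\lambda)b=t-\lambda a\le t$ gives the same bound $\widetilde\Psi(t)\le2\Psi(t)$.
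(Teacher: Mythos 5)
Your proof is correct. The paper does not prove this lemma itself but simply cites \cite[Lemma~2.2]{OKJFA18}, and your argument is the standard one behind that cited result: taking $\widetilde\Psi$ as the infimum of the affine majorants $\ell_{t_0}(s)=\Psi(t_0)+\frac{\Psi(t_0)}{t_0}\,s$ (each admissible by the two monotonicity hypotheses) gives concavity, $\Psi\le\widetilde\Psi$, and $\widetilde\Psi(t_0)\le\ell_{t_0}(t_0)=2\Psi(t_0)$, with the finiteness of the infimum and the value at $t=0$ handled correctly.
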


\subsection{$\mathcal{A}$-harmonic and $\varphi$-harmonic functions}

Let $\mathcal{A}$ be a bilinear form on $\R^{N\times n}$. We say that $\mathcal{A}$ is {\em strongly elliptic in the sense of Legendre-Hadamard} if for all $\bm b \in \R^N$ and  $z \in\R^{n}$, it holds that 
\begin{equation}
  \nu_{\mathcal{A}} \abs{\bm b}^2 \abs{z}^2\leq \langle\mathcal{A}(\bm b \otimes z)\,|\,(\bm b \otimes z)\rangle\leq L_{\mathcal{A}} \abs{\bm b}^2 \abs{z}^2
\label{eq:LegHam}
\end{equation}
for some $L_{\mathcal{A}}\geq \nu_{\mathcal{A}}>0$. 
We say that a Sobolev function $\bfw$ on a ball~$B_R(x_0)$ is
\emph{$\mathcal{A}$-harmonic} on $B_R(x_0)$ if it satisfies $-\divergence (\mathcal{A}D \bfw)=0$ in the sense of distributions; i.e.,
\begin{equation*}
\int_{B_R(x_0)} \langle\mathcal{A}D{\bf w}\, |\, D{\bm \psi}\rangle\,\mathrm{d}x=0\,,\quad \mbox{ for all }{\bm \psi}\in C^\infty_0(B_R(x_0);\R^N)\,.
\end{equation*}
It is well known from the classical theory (see, e.g.~\cite[Chapter 10]{giustibook}) that ${\bf w}$ is smooth in the interior of $B_R(x_0)$, and it satisfies the estimate
\begin{equation}
\sup_{B_{R/2}(x_0)}|D {\bf w}|+R \sup_{B_{R/2}(x_0)}|D^2 {\bf w}| \leq c(n,N,\nu_{\mathcal{A}},L_{\mathcal{A}}) \dashint_{B_R(x_0)}|D{\bf w}| \,\mathrm{d}x\,.
\label{eq:westimate1}
\end{equation}
Moreover,  if  $\bfu\in W^{1,\phi}(B_R;\R^N)$, where $\phi$ is an $N$-function with $\phi, \phi^*\in \Delta_2$, then there exists a unique  $\mathcal{A}$-harmonic mapping $\bfw\in \bfu+W^{1,\phi}_0(B_R;\R^N)$ and we have the following Calder\'on-Zygmund type estimate (see for instance \cite[Theorem 18 and Remark 19]{DIELENSTROVER12}):
\begin{equation}
 \dashint_{B_R(x_0)}\phi(|D{\bf w}|) \leq c(n,N,\nu_{\mathcal A},L_{\mathcal A},\Delta_2(\psi,\psi^*)) \dashint_{B_R(x_0)}\phi(|D{\bf u}|) \,\mathrm{d}x\,.
\label{eq:westimate2}
\end{equation}

Let $\varphi\in C^{1}([0,\infty)) $ be an $N$-function satisfying \eqref{(2.1celok)}.  We say that a map ${\bf w}\in W^{1,\varphi}(B_R(x_0);\R^N)$ is \emph{$\varphi$-harmonic} on $B_\varrho(x_0)$ 
if ${\bf w}$ is a weak solution to the system
\begin{equation}
\, {\rm div}\left( \frac{\phi'(\kabs{D \bfw})}{|D \bfw|} \, D \bfw\right) \, = \, {\bf 0} \quad \text{in }\ B_{R}(x_0)\,,\
\label{system2}
\end{equation}
that is,
\begin{equation*}
\int_{B_R(x_0)} \left\langle\frac{\varphi'(|D{\bf w}|)}{|D{\bf w}|}D{\bf w}\, \bigg| \,D\bm \psi\right\rangle\,\mathrm{d}x=0\,,\quad \mbox{ for all }\bm \psi\in C^\infty_0(B_R(x_0);\R^N)\,.
\end{equation*}
We notice that if $\phi(t)=t^p+at^q$, then $\phi$ satisfies \eqref{(2.1celokbis)}, and $D{\bf w}$ and ${\bf V}_\phi (D{\bf w})$ are locally H\"older continuous due to the following results, see \cite[Proposition 2.4 and Theorem 2.5]{DIESTROVER09}. 
\begin{proposition}\label{lemma:2:G-harmonic-holder} Let 
$$
\phi(t)= t^p+at^q \quad \text{with }\ 1<p\le q \ \text{ and }\ a\ge0\,,
$$
and $\bfV_\varphi$ be defined as in \eqref{eq:defV}.
Then there exist a constant $c>0$ and an exponent $\gamma_0 \in (0,1)$ depending only on $n$, $N$, $p$ and $q$ (independent of $a$) such that if ${\bf w} \in W^{1,\phi}(B_{R}(x_0),\mathbb{R}^{N})$ is a weak solution to \eqref{system2},
 then  for every $\tau\in(0,1]$,  there hold
$$
\sup_{B_{\tau R/2}(x_0)} \phi(\kabs{D {\bf w}})  \leq  c \dashint_{B_{\tau R}(x_0)} \phi(\kabs{D {\bf w}})\,\mathrm{d}x\,, 
$$
and 
\begin{equation}\label{eq:excessdecayw}
\dashint_{B_{\tau R}(x_0)} \kabs{\bfV_\varphi(D{\bf w})-(\bfV_\varphi(D {\bf w}))_{x_0,{\tau R}}}^2 \, \mathrm{d}x  \leq  c \tau^{2 \gamma_0} 	\dashint_{B_R(x_0)} \kabs{\bfV_\varphi(D{\bf w})-(\bfV_\varphi(D {\bf w}))_{x_0,R}}^2 \, \mathrm{d}x\,.
\end{equation}
\end{proposition}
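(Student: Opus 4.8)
The plan is to observe that $\phi(t)=t^p+at^q$ with $1<p\le q$ and $a\ge 0$ is an $N$-function satisfying the Uhlenbeck-type structure hypotheses of \cite{DIESTROVER09} with constants \emph{independent of} $a$, and then to invoke \cite[Proposition~2.4 and Theorem~2.5]{DIESTROVER09} verbatim. The only genuine point to verify is that the structural bounds $p_1,p_2$ controlling $\phi$ — namely \eqref{(2.1celokbis)} — can be taken to be $p_1=p$ and $p_2=q$ uniformly in $a$. This is immediate: a direct computation gives $t\phi''(t)/\phi'(t) = \big((p-1)t^p + a\,q(q-1)t^q\big)\big/\big(p\,t^p + a\,q\,t^q\big)$, which is a convex combination of $p-1$ and $q-1$ (weights $p\,t^p$ and $a\,q\,t^q$), hence lies in $[p-1,q-1]$ for every $t>0$ and every $a\ge 0$. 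Thus $\phi\in C^2((0,\infty))$ complies with \eqref{(2.1celokbis)} with $p_1=p$, $p_2=q$, and in particular $\Delta_2(\phi,\phi^*)$ depends only on $p$ and $q$, not on $a$.

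First I would record that, since \eqref{system2} is exactly the Euler--Lagrange system of the isotropic energy $\int_{B_R(x_0)}\phi(|D{\bf w}|)\,\mathrm{d}x$ and $\phi$ obeys the uniform bounds above, the regularity theory of Diening--Stroffolini--Verde applies with all constants and the decay exponent $\gamma_0$ depending only on $n,N,p,q$. The sup-bound $\sup_{B_{\tau R/2}(x_0)}\phi(|D{\bf w}|)\le c\dashint_{B_{\tau R}(x_0)}\phi(|D{\bf w}|)\,\mathrm{d}x$ is precisely the local boundedness part of \cite[Proposition~2.4]{DIESTROVER09} applied on the ball $B_{\tau R}(x_0)$ (the inequality for every $\tau\in(0,1]$ follows just by running the statement on the concentric ball of radius $\tau R$). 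Likewise, the excess-decay estimate \eqref{eq:excessdecayw} for ${\bf V}_\varphi(D{\bf w})$ is the oscillation/decay estimate of \cite[Theorem~2.5]{DIESTROVER09}: $C^{1,\gamma_0}$-regularity of ${\bf w}$ together with the natural Campanato-type decay of the excess $\dashint|{\bf V}_\varphi(D{\bf w})-({\bf V}_\varphi(D{\bf w}))_{x_0,\varrho}|^2$ in $\varrho$, which upon rescaling to radii $\tau R$ and $R$ yields the factor $\tau^{2\gamma_0}$. One should note that \eqref{eq:equivalencebis} lets us freely interchange $({\bf V}_\varphi(D{\bf w}))_{x_0,\varrho}$ with ${\bf V}_\varphi((D{\bf w})_{x_0,\varrho})$ in these averaged quantities, so the precise form of the centering in \cite{DIESTROVER09} is immaterial.

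The main (and essentially only) obstacle is the uniformity in $a$: one must be careful that the constant $c$ and exponent $\gamma_0$ extracted from \cite{DIESTROVER09} depend on $\phi$ only through $p_1,p_2$ (equivalently through $\Delta_2(\phi,\phi^*)$) and not on any finer feature of $\phi$ such as the value of $a$ or the ratio between the two terms. Since the hypotheses of \cite{DIESTROVER09} are stated exactly in terms of such ellipticity/growth exponents and $\Delta_2$-constants, and we have just shown these are $a$-independent, the conclusion follows; the degenerate limit $a=0$ (pure $p$-Laplace system) and the nondegenerate/large-$a$ regimes are covered by one and the same constant. Everything else — convexity of $\phi$, $\phi(0)=0$, $\phi'(0)=0$, $\lim_{t\to\infty}\phi'(t)=\infty$, and $\phi,\phi^*\in\Delta_2$ — is routine and has already been recorded in Section~\ref{sec:basicNfunctions}.
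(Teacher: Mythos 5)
Your proposal is correct and takes essentially the same route as the paper, which states this proposition without a separate proof, relying precisely on the observation that $\phi(t)=t^p+at^q$ satisfies \eqref{(2.1celokbis)} with $p_1=p$, $p_2=q$ uniformly in $a\ge 0$ and then citing \cite[Proposition~2.4 and Theorem~2.5]{DIESTROVER09}. (Only a typo to fix: the numerator of your displayed quotient should be $p(p-1)t^p+aq(q-1)t^q$, consistent with the weights $p\,t^p$ and $a\,q\,t^q$ you correctly use afterwards.)
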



\subsection{Harmonic type approximation results} \label{sec:harmonicapproximation}

We recall here two different harmonic type approximation results. The first one is the \emph{$\mathcal{A}$-harmonic approximation}, which addresses the problem of finding an $\mathcal{A}$-harmonic function~${\bf w}$ which is close to a given a Sobolev function $\bfu$ on a ball~$B_r$. Such a function is the $\mathcal{A}$-harmonic function with the same boundary values as $\bfu$; i.e., a Sobolev function~${\bf w}$
which satisfies
\begin{equation}
  \label{eq:calA1}
  \begin{cases}
    -\divergence (\mathcal{A} D {\bf w})= {\bf 0} &\qquad\text{on $B_r$}
    \\
    {\bf w}= \bfu & \qquad\text{on $\partial B_r$}
  \end{cases}
\end{equation}
in the sense of distribution.


The following is the version of the $\mathcal{A}$-harmonic approximation stated in \cite[Lemma~2.7]{CeladaOk}, which relies on \cite[Theorem~14]{DIELENSTROVER12}. 
It is obtained, coupling the $\mathcal{A}$-harmonic approximation result proven in  \cite[Theorem~14]{DIELENSTROVER12} with the higher integrability result coming from Caccioppoli and Poincar\'e inequalities. 
\begin{lemma}
  \label{thm:Aappr_psi}
  Let  $\mathcal{A}$ be a strongly elliptic (in the sense of
  Legendre-Hadamard) bilinear form on $\R^{N\times n}$, $\phi$ be an N-function with $\phi, \phi^* \in \Delta_2$, and let $s>1$ and $\mu>0$.  
 Then for every  $\epsilon>0$, there exists $\delta>0$ depending on $n$, $N$,  $\nu_{\mathcal{A}}$, $L_{\mathcal{A}}$, $\Delta_2(\phi,\phi^*)$ and
  $s$ such that the following holds.  If $\bfu \in
  W^{1,\phi}(B_r;\R^N)$ satisfies
  $$
  \dashint_{B_r} \phi(|D\bfu|) \, \d x \le    \left(\dashint_{B_r} \phi(|D\bfu|)^s \, \d x \right)^{\frac{1}{s}} \le \phi(\mu)\,,
  $$
  and is an {\em almost $\mathcal{A}$-harmonic} in $B_r$ in the sense that
  \begin{align*}
    \biggabs{\dashint_{B_r} \langle\mathcal{A}D \bfu \, | \, D \bm \psi\rangle\,\mathrm{d}x}
    \leq \delta \mu
    \norm{D \bm \psi}_{\infty}
  \end{align*}
  for all $\bm \psi \in C^\infty_0(B_r;\R^N)$, then there holds
  \begin{equation*}
 \label{eq:Aappr_est}
    \dashint_{B_r} \phi\bigg(\frac{\abs{\bfu-{\bf w}}}{r}\bigg)\,\mathrm{d}x +
    \dashint_{B_r} \phi(\abs{D\bfu - D {\bf w}})\,\mathrm{d}x \leq \epsilon \phi(\mu)\,,
  \end{equation*}
  where ${\bf w} \in W^{1, \phi}(B_r;\RN)$ is the unique weak solution of~\eqref{eq:calA1}.
  \end{lemma}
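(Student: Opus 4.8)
The plan is to deduce the $\mathcal{A}$-harmonic approximation lemma by combining the contradiction-free, Lipschitz-truncation-based approximation result of \cite[Theorem~14]{DIELENSTROVER12} with a higher integrability improvement so that the $L^s$-bound in the hypothesis can be fed into the quantitative estimate of that theorem. The result of \cite{DIELENSTROVER12} provides, for a function $\bfu$ that is almost $\mathcal{A}$-harmonic with $\dashint_{B_r}\phi(|D\bfu|)\,\d x\le\phi(\mu)$, the existence of the solution ${\bf w}$ of \eqref{eq:calA1} together with the bound $\dashint_{B_r}\phi(|D\bfu-D{\bf w}|)\,\d x\le \epsilon\,\phi(\mu)$ provided the defect $\delta$ is small; the role of the extra assumption $(\dashint_{B_r}\phi(|D\bfu|)^s\,\d x)^{1/s}\le\phi(\mu)$ is exactly to supply the small amount of self-improving integrability that the Lipschitz-truncation step of that theorem requires in order to control the truncation error. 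So the first step is simply to quote \cite[Theorem~14]{DIELENSTROVER12} in the form it is stated there, observing that its hypotheses are met under ours.

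The second step is to recover the Poincar\'e-type half of the conclusion, namely the term $\dashint_{B_r}\phi(|\bfu-{\bf w}|/r)\,\d x$. Here one uses that $\bfu-{\bf w}\in W^{1,\phi}_0(B_r;\R^N)$ by construction (same boundary values), so the Sobolev--Poincar\'e inequality for Orlicz--Sobolev functions with zero trace (applicable since $\phi,\phi^*\in\Delta_2$, e.g.\ via \cite[Theorem~7]{DIEETT08}) gives $\dashint_{B_r}\phi(|\bfu-{\bf w}|/r)\,\d x\lesssim \dashint_{B_r}\phi(|D\bfu-D{\bf w}|)\,\d x$ up to a possible iteration through $\theta$ (using that $\gamma$-th powers on the right side can be absorbed since we already control a full $L^\phi$-norm, not just a sublevel quantity). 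Combining with the gradient estimate from Step~1 yields $\dashint_{B_r}\phi(|\bfu-{\bf w}|/r)\,\d x\le c\,\epsilon\,\phi(\mu)$; after relabelling $\epsilon$ (i.e.\ starting from $\epsilon/(1+c)$ and accordingly shrinking $\delta$), both terms are bounded by $\epsilon\,\phi(\mu)$, which is the claimed estimate.

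The third, more delicate point is the interplay between the higher integrability exponent $s$ in the hypothesis and whatever self-improving exponent the truncation argument internally needs. Concretely, \cite[Theorem~14]{DIELENSTROVER12} is typically stated with a fixed structural gain, so one must check that the Caccioppoli-plus-Poincar\'e higher integrability (Gehring-type, in the spirit of Lemma~\ref{lem:gehring}) indeed upgrades $\dashint_{B_r}\phi(|D\bfu|)\,\d x\le\phi(\mu)$ to an $L^{s'}$-bound with $s'>1$ controlled by $\phi(\mu)$, for some $s'$ that may be taken smaller than the given $s$; if $s$ is already smaller than the needed exponent one simply replaces it by $\min\{s,s'\}$, which only strengthens the hypothesis. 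This is exactly the mechanism described in the sentence preceding the statement ("coupling the $\mathcal{A}$-harmonic approximation result $\ldots$ with the higher integrability result coming from Caccioppoli and Poincar\'e inequalities"), so the argument is: verify that $\bfu$, being almost $\mathcal{A}$-harmonic, satisfies a Caccioppoli inequality with error controlled by $\delta\mu$, apply Poincar\'e to get a reverse-H\"older inequality, and invoke Gehring to obtain the needed exponent.

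The main obstacle I expect is purely bookkeeping rather than conceptual: tracking how $\delta$ must depend on $\epsilon$, $s$, and the structural constants through two composed arguments (the self-improvement step and the quantitative approximation step), and ensuring that no circularity arises — the higher integrability must follow from almost-$\mathcal{A}$-harmonicity with a smallness that is available \emph{before} one fixes $\delta$. Once it is checked that the higher integrability constant depends only on $n$, $N$, $\nu_{\mathcal{A}}$, $L_{\mathcal{A}}$, $\Delta_2(\phi,\phi^*)$ and $s$ (and not on $\bfu$, $r$, or $\mu$, by scaling), the dependencies claimed in the statement follow, and the proof is complete by collecting Steps~1--3.
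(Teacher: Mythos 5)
The paper offers no proof of this lemma: it is quoted from \cite[Lemma~2.7]{CeladaOk}, with the one--line indication that it follows by coupling \cite[Theorem~14]{DIELENSTROVER12} with higher integrability. Your proposal follows the same route, so in substance the approaches agree; but two points deserve comment. First, your Step~3 rests on a misreading of where the higher integrability enters. The bound $\left(\dashint_{B_r}\phi(|D\bfu|)^s\,\d x\right)^{1/s}\le\phi(\mu)$ is a \emph{hypothesis} of the lemma, and in \cite[Theorem~14]{DIELENSTROVER12} the exponent $s>1$ is arbitrary with $\delta$ permitted to depend on $s$; consequently no Caccioppoli--Gehring self-improvement has to be run inside the proof, and the circularity you worry about cannot arise. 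That machinery (Lemma~\ref{lem:gehring}, Lemma~\ref{cor:reverse}) is deployed only when the lemma is \emph{applied} in Section~\ref{sec:decayestimate}, to verify the $L^s$-hypothesis for the particular function at hand.

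Second, the one point that does need an argument --- and that your Step~1 passes over --- is the normalization of the defect. In \cite[Theorem~14]{DIELENSTROVER12} almost-harmonicity is measured against $\dashint_{B_r}|D\bfu|\,\d x$, whereas here it is measured against $\mu$, and since $\dashint_{B_r}|D\bfu|\,\d x\le\mu$ by Jensen's inequality the present hypothesis is the weaker one, so the cited theorem does not apply verbatim. The standard remedy is a dichotomy: if $\dashint_{B_r}|D\bfu|\,\d x\ge\delta^{1/2}\mu$ the theorem applies with $\delta^{1/2}$ in place of $\delta$; in the complementary case, interpolating the $L^s$-bound against the almost-concave minorant of a small power of $\phi$ (Lemma~\ref{lem:lemma2.2ok}) shows that $\dashint_{B_r}\phi(|D\bfu|)\,\d x$ is already a small multiple of $\phi(\mu)$, so the conclusion follows from the Calder\'on--Zygmund estimate \eqref{eq:westimate2} and the zero-trace Poincar\'e inequality with no approximation at all. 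This case analysis is the actual content of the proof in \cite{CeladaOk}; without it the reduction to \cite[Theorem~14]{DIELENSTROVER12} is incomplete. Your Step~2 (recovering the Poincar\'e half of the conclusion) is fine, though in fact the cited theorem already delivers that term.
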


%
Now, moving on to $\varphi$-harmonic mappings, the following \emph{$\varphi$-harmonic approximation} lemma (\cite[Lemma~1.1]{DIESTROVER10}) is the extension to general convex functions of the $p$-harmonic approximation lemma \cite{DUMIN04b}, \cite[Lemma~1]{DUMIN04}, and allows to approximate ``almost $\varphi$-harmonic'' mappings by $\varphi$-harmonic ones. In particular, we present the version introduced in  \cite[Corollary~2.10]{CeladaOk}.

\begin{lemma}\label{lem:phiharmapprox}
Let $\varphi$ be an $N$-function satisfying \eqref{(2.1celok)}, $s>1$, and $c_0>0$. For every $\varepsilon>0$ there exists $\delta>0$ depending only on $\varepsilon$, $n$, $N$, $p$, $q$, $s$ and $c_0$ such that the following holds. 
If ${\bf u}\in W^{1,\varphi}(B_r;\R^N)$ satisfies 
$$
\left(\dashint_{B_r} \phi(|D\bfu|)^s\,\d x\right)^{\frac{1}{s}} \le c_0 \dashint_{B_r} \phi(|D\bfu|)\,\d x\,,
$$
and is \emph{almost $\varphi$-harmonic} in the sense that
$$
\dashint_{B_r} \left\langle\frac{\varphi'(|D{\bf u}|)}{|D{\bf u}|}D{\bf u}\, \biggl|\, D\bm \psi\right\rangle\,\mathrm{d}x \leq \delta\left(\dashint_{B_{2r}}\varphi(|D{\bf u}|)\,\mathrm{d}x+\varphi(\|D\bm \psi\|_{\infty})\right)
$$
for all $\bm \psi\in C^\infty_0(B_r;\R^N)$, then the unique $\varphi$-harmonic ${\bf w}\in \bfu + W^{1,\varphi}_0(B_r;\R^N)$ satisfies
$$
\dashint_{B_r} |{\bf V}_{\phi}(D{\bf u})-{\bf V}_{\phi}(D{\bf w})|^{2}\,\mathrm{d}x\ \leq \varepsilon \dashint_{B_{2r}}\varphi(|D{\bf u}|)\,\mathrm{d}x\,,
$$
where ${\bf V}_\varphi$ is as in \eqref{eq:defV}. 
\end{lemma}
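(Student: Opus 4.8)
\medskip
\noindent\emph{Proof strategy.} The plan is to follow the Lipschitz truncation route of \cite{DIELENSTROVER12,DIESTROVER10} rather than a blow-up/contradiction scheme, since the former gives the quantitative dependence of $\delta$ on the data directly. By the rescaling $\bfu(x)\mapsto\bfu(x_0+rx)$ one reduces to $x_0=0$, $r=1$. Set $\lambda:=\dashint_{B_2}\varphi(|D\bfu|)\,\mathrm{d}x$, so that the reverse-H\"older hypothesis and $\dashint_{B_1}(\ccdot)\le2^n\dashint_{B_2}(\ccdot)$ give $\big(\dashint_{B_1}\varphi(|D\bfu|)^s\,\mathrm{d}x\big)^{1/s}\le c\lambda$. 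Let $\bfw\in\bfu+W^{1,\varphi}_0(B_1;\R^N)$ denote the $\varphi$-harmonic map and $\bm\xi:=\bfu-\bfw\in W^{1,\varphi}_0(B_1;\R^N)$. I would first record the standard facts that $\dashint_{B_1}\varphi(|D\bfw|)\,\mathrm{d}x\lesssim\lambda$ (test the equation for $\bfw$ with $\bfw-\bfu$, then use Young's inequality \eqref{eq:2.5ok} and $\varphi^*(\varphi'(t))\sim\varphi(t)$) and that $\varphi(|D\bfw|)$ obeys a global reverse-H\"older inequality on $B_1$, hence $\big(\dashint_{B_1}\varphi(|D\bfw|)^{s_1}\,\mathrm{d}x\big)^{1/s_1}\lesssim\lambda$ for some $s_1>1$ depending on the data (Caccioppoli for $\bfw$ $+$ Sobolev--Poincar\'e $+$ the Gehring-type Lemma~\ref{lem:gehring}). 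Setting $\sigma:=\min\{s,s_1\}>1$, this yields $\big(\dashint_{B_1}\varphi(|D\bm\xi|)^{\sigma}\,\mathrm{d}x\big)^{1/\sigma}\lesssim\lambda$. Finally, writing $S_\varphi(\bm\eta):=\varphi'(|\bm\eta|)|\bm\eta|^{-1}\bm\eta$, the monotonicity inequality $\langle S_\varphi(\bm\eta_1)-S_\varphi(\bm\eta_2)\,|\,\bm\eta_1-\bm\eta_2\rangle\gtrsim|\bfV_\varphi(\bm\eta_1)-\bfV_\varphi(\bm\eta_2)|^2$ (valid since $\varphi,\varphi^*\in\Delta_2$) together with \eqref{eq:linkVphi} reduces the claim to the estimate $I:=\dashint_{B_1}\langle S_\varphi(D\bfu)-S_\varphi(D\bfw)\,|\,D\bm\xi\rangle\,\mathrm{d}x\le c_\star^{-1}\varepsilon\lambda$, where $c_\star$ is the structural constant implicit in this reduction.

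The obstruction is that $\bm\xi$ is not admissible in the almost-$\varphi$-harmonicity of $\bfu$, which is quantified only against Lipschitz maps. I would bypass this by inserting a Lipschitz truncation $\bm\xi_\mu\in W^{1,\infty}_0(B_1;\R^N)\subset W^{1,\varphi}_0(B_1;\R^N)$ at level $\mu>0$, in the Orlicz--Sobolev form of \cite{DIELENSTROVER12}: $\|D\bm\xi_\mu\|_\infty\le c\mu$, $\bm\xi_\mu=\bm\xi$ outside a bad set $E_\mu\subset B_1$ with $|E_\mu|\le c\,\varphi(\mu)^{-1}\!\int_{\{M(\varphi(|D\bm\xi|))>c\varphi(\mu)\}}\!\varphi(|D\bm\xi|)\,\mathrm{d}x$, and the stability bound $\int_{B_1}\varphi(|D\bm\xi-D\bm\xi_\mu|)\,\mathrm{d}x\le c\!\int_{\{M(\varphi(|D\bm\xi|))>c\varphi(\mu)\}}\!\varphi(|D\bm\xi|)\,\mathrm{d}x$, with $M$ the Hardy--Littlewood maximal operator (restricted to $B_1$). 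I would then fix the level $\mu_0$ by $\varphi(\mu_0):=K\lambda$ for a large parameter $K\ge1$ to be chosen, so that $\varphi(\|D\bm\xi_{\mu_0}\|_\infty)\le c\varphi(\mu_0)=cK\lambda$ by $\Delta_2(\varphi)$.

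Next I would split $I=I_1+I_2$ with $I_1:=\dashint_{B_1}\langle S_\varphi(D\bfu)-S_\varphi(D\bfw)\,|\,D\bm\xi_{\mu_0}\rangle\,\mathrm{d}x$ and $I_2:=\dashint_{B_1}\langle S_\varphi(D\bfu)-S_\varphi(D\bfw)\,|\,D\bm\xi-D\bm\xi_{\mu_0}\rangle\,\mathrm{d}x$. In $I_1$, testing the equation for $\bfw$ with $\bm\xi_{\mu_0}$ annihilates the $S_\varphi(D\bfw)$-term, and the almost-$\varphi$-harmonicity of $\bfu$ (extended to $W^{1,\infty}_0$ test maps by density) gives $|I_1|\le\delta\big(\dashint_{B_2}\varphi(|D\bfu|)\,\mathrm{d}x+\varphi(\|D\bm\xi_{\mu_0}\|_\infty)\big)\le c\,\delta K\lambda$. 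In $I_2$, the integrand is supported in $E_{\mu_0}$, where $|D\bm\xi_{\mu_0}|\le c\mu_0$; using $|S_\varphi(D\bfu)-S_\varphi(D\bfw)|\le\varphi'(|D\bfu|)+\varphi'(|D\bfw|)$, Young's inequality, $\varphi^*(\varphi'(t))\sim\varphi(t)$ and $\Delta_2(\varphi)$, I would arrive at
\[
|I_2|\ \lesssim\ \dashint_{E_{\mu_0}}\!\big[\varphi(|D\bfu|)+\varphi(|D\bfw|)\big]\,\mathrm{d}x\ +\ \dashint_{B_1}\varphi(|D\bm\xi-D\bm\xi_{\mu_0}|)\,\mathrm{d}x\ +\ \varphi(\mu_0)\,\tfrac{|E_{\mu_0}|}{|B_1|}\,.
\]
Each term on the right is a super-level-set integral of one of $\varphi(|D\bm\xi|),\varphi(|D\bfu|),\varphi(|D\bfw|)$ relative to the level $\varphi(\mu_0)=K\lambda$ (through $E_{\mu_0}$ and the stability bound). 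Applying the elementary inequality $\int_{\{Mg>t\}}g\,\mathrm{d}x\le\|g\|_{L^\sigma(B_1)}|\{Mg>t\}|^{1/\sigma'}\lesssim t^{1-\sigma}\|g\|_{L^\sigma(B_1)}^{\sigma}$ (boundedness of $M$ on $L^\sigma$, $\sigma>1$), together with the reverse-H\"older bounds $\|\varphi(|D\bfu|)\|_{L^\sigma(B_1)},\|\varphi(|D\bfw|)\|_{L^\sigma(B_1)},\|\varphi(|D\bm\xi|)\|_{L^\sigma(B_1)}\lesssim\lambda$ and $t\sim K\lambda$, one obtains $|I_2|\lesssim K^{1-\sigma}\lambda=K^{-(\sigma-1)}\lambda$.

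Collecting, $I\le c_1\big(\delta K+K^{-(\sigma-1)}\big)\lambda$ with $c_1=c_1(n,N,p,q,s,c_0)$. Given $\varepsilon>0$, I would first choose $K\ge1$, depending only on $\varepsilon$ and the data, so large that $c_1c_\star K^{-(\sigma-1)}\le\varepsilon/2$, and then $\delta>0$, depending only on $\varepsilon$ and the data, so small that $c_1c_\star\delta K\le\varepsilon/2$; this gives $I\le c_\star^{-1}\varepsilon\lambda$, hence $\dashint_{B_1}|\bfV_\varphi(D\bfu)-\bfV_\varphi(D\bfw)|^2\,\mathrm{d}x\le c_\star I\le\varepsilon\lambda$, and undoing the scaling finishes the argument. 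The hard part is the quantitative Lipschitz truncation in the Orlicz--Sobolev setting---its construction and, above all, the bad-set and stability estimates above---and, secondarily, securing the global higher integrability of the comparison map $\bfw$ up to $\partial B_1$; both are available from \cite{DIELENSTROVER12}, or can be reproved via Caccioppoli, Sobolev--Poincar\'e and Lemma~\ref{lem:gehring}.
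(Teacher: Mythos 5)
The paper does not actually prove this lemma: it is imported as a quoted result from \cite[Lemma~1.1]{DIESTROVER10}, in the sharpened form of \cite[Corollary~2.10]{CeladaOk}, and your argument is essentially the proof given there — comparison map $\bfw$, energy bound and global (up to $\partial B_1$) higher integrability for $\bfw$ via boundary Caccioppoli plus a Gehring-type lemma, reduction through monotonicity to $I=\dashint_{B_1}\langle S_\varphi(D\bfu)-S_\varphi(D\bfw)\,|\,D\bfu-D\bfw\rangle\,\mathrm{d}x$, and the Orlicz Lipschitz truncation of $\bfu-\bfw$ at level $\varphi(\mu_0)=K\lambda$ with the split $I_1+I_2$, the equation for $\bfw$ and the almost-$\varphi$-harmonicity handling $I_1$, and the $L^\sigma$-boundedness of the maximal operator giving $|I_2|\lesssim K^{1-\sigma}\lambda$; the final choice ``$K$ large, then $\delta$ small'' is the correct quantitative bookkeeping, and the two technical inputs you flag (the truncation with its bad-set and stability estimates, and the boundary higher integrability of the comparison map, which is exactly where the reverse-H\"older hypothesis on $\varphi(|D\bfu|)$ is needed) are precisely what \cite{DIELENSTROVER12,CeladaOk} supply. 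The one imprecision is the parenthetical claim that $\langle S_\varphi(\bm\eta_1)-S_\varphi(\bm\eta_2)\,|\,\bm\eta_1-\bm\eta_2\rangle\gtrsim|\bfV_\varphi(\bm\eta_1)-\bfV_\varphi(\bm\eta_2)|^2$ follows from $\varphi,\varphi^*\in\Delta_2$ alone: this equivalence (from \cite{DIEETT08}) requires the quantitative convexity \eqref{(2.1celokbis)} (equivalently $\varphi'(t)\sim t\varphi''(t)$), which is what the cited sources assume and what $\varphi(t)=t^p+at^q$ satisfies in the paper's application, so the gap is one of hypothesis bookkeeping rather than substance.
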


%

\section{Caccioppoli and reverse H\"older type estimates} \label{sec:caccio_reverse}

Throughout this section, let $H:\Omega\times[0,\infty)\to[0,\infty)$ be defined as in \eqref{eq:H} complying with \eqref{eq:pq}, and  ${\bf A}:\Omega\times \R^{N\times n}\to \R^{N\times n}$ comply with \eqref{eq:1.8ok1}--\eqref{eq:1.12ok}.

\subsection{Caccioppoli type estimates} \label{sec:caccioppolitype}

Let $B_r=B_r(x_0)\subset\Omega$, ${\bf Q}\in \R^{N\times n}$ and $\bm\ell_{x_0,r, {\bf Q}}$ be the affine function defined as 
$$
\bm\ell_{x_0,r,{\bf Q}}(x):= (\bfu)_{x_0,r} + {\bf Q}(x-x_0)\,, \quad x\in \R^n \,.
$$
The first key tool is the following Caccioppoli type estimate for $\bfu-\bm\ell_{x_0,r,{\bf Q}}$. 

\begin{lemma}\label{lem:lemma4.1} (Caccioppoli estimates)
Let $\bfu\in W^{1,1}(\Omega;\R^N)$ with $H(\cdot,|D\bfu|)\in L^1(\Omega)$ be a weak solution to  \eqref{system1}. 
Then $B_{2r}(x_0)\subset\Omega$ with $r\leq1$ and  ${\bf Q}\in\mathbb{R}^{N\times n}$ with $(2r)^\alpha|{\bf Q}|^{q-p}\le 1$, we have
\begin{equation}
\begin{split}
 \dashint_{B_{r}(x_0)} H_{|{\bf Q}|}(x,|D\bfu-{\bf Q}|)\,\mathrm{d}x  & \leq c \dashint_{B_{2r}(x_0)} H_{|{\bf Q}|}\left(x,\frac{|{\bf u}-\bm\ell_{x_0,r,{\bf Q}}|}{2r}\right)\,\mathrm{d}x \\
 &\qquad + c (r^{\beta_0}+r^\alpha |\bfQ|^{q-p})^{\frac{q}{q-1}}H^+_{B_{2r}(x_0)}(|\bfQ|)\,,
\end{split}
\label{eq:caccioppoliII}
\end{equation} 
for some constant $c=c(n,N,p,q,[a]_{C^{0,\alpha}},\nu,L)>0$, where $\alpha$ and $\beta_0$ are as in \eqref{eq:pq} and \eqref{eq:1.12ok}, respectively. 
\end{lemma}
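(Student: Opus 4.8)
The strategy is the standard Caccioppoli argument adapted to the shifted double phase function, the main novelties being the careful bookkeeping of the frozen coefficient $H^-_{B_{2r}}$ versus $H^+_{B_{2r}}$ and the fact that ${\bf A}(x,\cdot)$ is only compared to the ``Euler operator'' at a fixed matrix ${\bf Q}$. First I would fix a cutoff $\eta\in C^\infty_0(B_{2r}(x_0))$ with $\eta\equiv 1$ on $B_r(x_0)$, $0\le\eta\le1$ and $|D\eta|\lesssim 1/r$, set $w:=\bfu-\bm\ell_{x_0,r,{\bf Q}}$ and test the weak formulation \eqref{eq:system1} with $\bm\psi:=\eta^q w$ (the exponent $q$ on $\eta$ is chosen so that the Young-type absorption below respects the $q$-growth term of $H$). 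Since $D\bm\ell_{x_0,r,{\bf Q}}\equiv{\bf Q}$ is constant, $\bm\psi$ is an admissible test function with $H(\cdot,|D\bm\psi|)\in L^1$ by the growth assumption \eqref{eq:1.8ok1} and the hypothesis $(2r)^\alpha|{\bf Q}|^{q-p}\le1$. Subtracting the constant $\int \langle {\bf A}(x,{\bf Q})\,|\,D\bm\psi\rangle$ — which vanishes because $D(\eta^q w)$ has zero average contributions against a constant after integration by parts, or more simply because we may re-test — I arrive at
\begin{equation*}
\int_{B_{2r}} \eta^q \langle {\bf A}(x,D\bfu)-{\bf A}(x,{\bf Q})\,|\,D\bfu-{\bf Q}\rangle \,\mathrm{d}x = -\int_{B_{2r}} q\,\eta^{q-1}\langle {\bf A}(x,D\bfu)-{\bf A}(x,{\bf Q})\,|\, D\eta\otimes w\rangle\,\mathrm{d}x\,.
\end{equation*}

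\textbf{Estimating the two sides.} For the left-hand side I would invoke the strong monotonicity \eqref{eq:1.9ok}, which gives a lower bound $\tilde\nu \int \eta^q H''(x,|D\bfu|+|{\bf Q}|)|D\bfu-{\bf Q}|^2\,\mathrm{d}x$; by the equivalence \eqref{(2.6b)} this controls $\int_{B_r} H_{|{\bf Q}|}(x,|D\bfu-{\bf Q}|)\,\mathrm{d}x$ from below, which is the quantity I want. For the right-hand side I split ${\bf A}(x,D\bfu)-{\bf A}(x,{\bf Q})$, bound $|{\bf A}(x,D\bfu)-{\bf A}(x,{\bf Q})|$ using \eqref{eq:1.8ok1} and the mean value theorem together with Lemma~\ref{technisch-mu} by $c\,H''(x,|D\bfu|+|{\bf Q}|)(|D\bfu|+|{\bf Q}|)|D\bfu-{\bf Q}|\sim (H_{|{\bf Q}|})'(x,|D\bfu-{\bf Q}|)(|D\bfu-{\bf Q}|+|{\bf Q}|)$ after unpacking the shift, then apply the shifted Young inequality \eqref{eq:2.5ok} for the $N$-function $H_{|{\bf Q}|}(x,\cdot)$ with a small parameter $\lambda$ to absorb the term $\int\eta^q (H_{|{\bf Q}|})'(x,|D\bfu-{\bf Q}|)\,|D\eta\otimes w|$ into the left-hand side, at the cost of $c\int_{B_{2r}} H_{|{\bf Q}|}(x,|w|/r)\,\mathrm{d}x$. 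This produces the first term on the right of \eqref{eq:caccioppoliII}.

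\textbf{The $x$-dependence and the error term.} The genuinely delicate part is that ${\bf A}(x,{\bf Q})$ is not ${\bf A}(x_0,{\bf Q})$, so one cannot directly pretend the operator is $x$-independent; rather, the desired clean comparison forces me to also add and subtract ${\bf A}(x,{\bf Q})$ against a reference, and the hypothesis \eqref{eq:1.11ok} enters. More precisely, after the manipulations above there remains a contribution in which the ellipticity lower bound is used with the ``wrong'' reference matrix; controlling it requires bounding $|{\bf A}(x,{\bf Q})-{\bf A}(x_0,{\bf Q})|$ by $L|x-x_0|^{\beta_0}(H'(x,|{\bf Q}|)+H'(x_0,|{\bf Q}|))+L|a(x)-a(x_0)||{\bf Q}|^{q-1}\lesssim (r^{\beta_0}+r^\alpha|{\bf Q}|^{q-p})H'(x,|{\bf Q}|)$ on $B_{2r}$, using $a\in C^{0,\alpha}$ and $H'(x,|{\bf Q}|)\sim |{\bf Q}|^{p-1}+a(x)|{\bf Q}|^{q-1}$. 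Pairing this against $\eta^q|D\bfu-{\bf Q}|$ (from the left side) or $\eta^{q-1}|D\eta\otimes w|$ (from the right) and applying Young's inequality once more — this time splitting off a power $\tfrac{q}{q-1}$ of $(r^{\beta_0}+r^\alpha|{\bf Q}|^{q-p})$ so that the residual is $(H_{|{\bf Q}|})$-subordinate and absorbable — yields the error term $c(r^{\beta_0}+r^\alpha|{\bf Q}|^{q-p})^{q/(q-1)}H^+_{B_{2r}(x_0)}(|{\bf Q}|)$, where $H^+$ appears because $H'(x,|{\bf Q}|)\lesssim (H^+_{B_{2r}(x_0)})'(|{\bf Q}|)$ uniformly on $B_{2r}$. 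The main obstacle, and where I would spend the most care, is precisely this last bookkeeping: keeping the exponents on $r^{\beta_0}$, $r^\alpha|{\bf Q}|^{q-p}$ consistent through repeated applications of the shifted Young inequality so that everything not absorbed collapses into exactly $H^+_{B_{2r}}(|{\bf Q}|)$ times the stated power, while using $(2r)^\alpha|{\bf Q}|^{q-p}\le 1$ to keep all these quantities bounded. A standard hole-filling / iteration lemma on the radius then removes the dependence on intermediate cutoff radii and completes the proof.
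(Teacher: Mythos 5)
Your proposal follows essentially the same route as the paper: test with $\bm\psi=\eta^q(\bfu-\bm\ell_{x_0,r,{\bf Q}})$, use the monotonicity \eqref{eq:1.9ok} with \eqref{(2.6b)} for the lower bound, \eqref{eq:1.8ok1} with Lemma~\ref{technisch-mu} and the shifted Young inequality for the $D\eta$-term, and \eqref{eq:1.11ok} for the $x$-dependence, which produces the error term $(r^{\beta_0}+r^\alpha|\bfQ|^{q-p})^{q/(q-1)}H^+_{B_{2r}}(|\bfQ|)$.

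One correction to your second paragraph: the quantity $\int_{B_{2r}}\langle {\bf A}(x,{\bf Q})\,|\,D\bm\psi\rangle\,\mathrm{d}x$ does \emph{not} vanish, because ${\bf A}(\cdot,{\bf Q})$ is not a constant field — only $\int_{B_{2r}}\langle {\bf A}(\bar x,{\bf Q})\,|\,D\bm\psi\rangle\,\mathrm{d}x=0$ for a frozen point $\bar x$ (the paper uses $\bar x=x^-_{x_0,2r}$), since then the integrand is a constant matrix paired with the gradient of a compactly supported function. Consequently your displayed identity is missing the term $\int_{B_{2r}}\langle {\bf A}(\bar x,{\bf Q})-{\bf A}(x,{\bf Q})\,|\,D\bm\psi\rangle\,\mathrm{d}x$. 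You do in fact estimate exactly this term in your third paragraph via \eqref{eq:1.11ok} and the shifted Young inequality, so the argument closes once the identity is written correctly; but as stated the justification ("zero average contributions against a constant") is wrong and the displayed equation is false. A minor further remark: no hole-filling or radius iteration is needed at the end — the Young-inequality remainders are absorbed directly into the left-hand side on the fixed ball $B_{2r}$, which is how the paper concludes.
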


\begin{proof}
The proof scheme is nowadays standard, compare, e.g., with the argument of \cite[Lemma~4.1]{OKJFA18}. 
We 
use the shorthands $\bm \ell_{r}$, $B_r$ and $x^-_r$ 
for $\bm \ell_{x_0,r,{\bf Q}}$, $B_r(x_0)$ and $x^-_{x_0,r}$, respectively. 
We consider a cut-off function $\eta\in C_0^\infty(B_{2r})$ such that $0\le \eta \le1$, $\eta\equiv1$ on $B_{r}$ and $|D\eta|\leq c(n)/r$, and, correspondingly, we define the function $\bm \psi:=\eta^q({\bf u} - \bm \ell_{r})$. 
Note that
\begin{equation}
D\bm \psi = \eta^q D({\bf u} - \bm \ell_{r}) + q\eta^{q-1} ({\bf u} - { \bm \ell_{r}})\otimes D \eta \,.
\label{eq:gradienttest}
\end{equation}
Taking $\bm \psi$ as a test function in \eqref{system1} 
and using the identity
\begin{equation}
\dashint_{B_{2r}}\langle {\bf A}(x^-_{2r},{\bf Q})\,|\, D\bm \psi\rangle\,\mathrm{d}x=0
\label{eq:4.4ok}
\end{equation}
we get
\begin{equation*}
\begin{split}
0= \dashint_{B_{2r}}\langle {\bf A}(x,D\bfu)-{\bf A}(x^-_{2r},{\bf Q}))\,|\,D\bm \psi \rangle\,\mathrm{d}x & = \dashint_{B_{2r}}\langle{\bf A}(x,D\bfu)-{\bf A}(x,{\bf Q}))\,|\,D\bm \psi \rangle\,\mathrm{d}x \\
& \,\,\,\,\,\, + \dashint_{B_{2r}}\langle {\bf A}(x,{\bf Q})-{\bf A}(x^-_{2r},{\bf Q}))\,|\,D\bm \psi\rangle\,\mathrm{d}x\,,
\end{split}
\end{equation*}
whence, taking into account \eqref{eq:gradienttest}, 
\begin{equation}
\begin{split}
J_1:&=\dashint_{B_{2r}} \eta^q \langle {\bf A}(x,D\bfu)-{\bf A}(x,{\bf Q})\,|\, D{\bf u} -{\bf Q}\rangle\,\mathrm{d}x \\
& =  \dashint_{B_{2r}}\langle{\bf A}(x_{2r}^-, {\bf Q})-{\bf A}(x,{\bf Q})\,|\, D{\bm \psi}\rangle\,\mathrm{d}x   - q \dashint_{B_{2r}}\eta^{q-1} \langle {\bf A}(x,D\bfu)-{\bf A}(x,{\bf Q}) \,|\, ({\bf u} - {\bm \ell_{r}})  \otimes D\eta \rangle\,\mathrm{d}x \\
& =: J_2+J_3\,.
\end{split}
\label{eq:4.2ok}
\end{equation}

Now, we proceed to estimate each term above separately. With \eqref{eq:1.9ok} and \eqref{(2.6b)} we get

\begin{equation}
J_1  \geq \frac{1}{\tilde{c}} \dashint_{B_{2r}} \eta^q H_{|{\bf Q}|}(x,|D{\bf u} - {\bf Q}|)\,\mathrm{d}x
\label{eq:stimaJ1}
\end{equation}
for some $\tilde{c}\ge 1$.
To estimate $J_3$ we use \eqref{eq:1.8ok1}, \eqref{eq:phi_shifted}, Lemma~\ref{technisch-mu}, 
Young's inequality  with $\phi(t)=H_{|{\bf Q}|}(x,t)$, \eqref{eq:2.6ok} and \eqref{(2.6a)} and we get

\begin{equation}
\begin{split}
|J_3| & \le c \dashint_{B_{2r}}\left(\int_0^1|D_\xi {\bf A}(x,\tau(D\bfu-{\bf Q}) + {\bf Q})|\,\mathrm{d}\tau\right)|D\bfu-{\bf Q}|\frac{|{\bf u} - \bm \ell_{r}|}{2r}\,\mathrm{d}x \\
& \le c  \dashint_{B_{2r}}\frac{H'(x, |{\bf Q}|+|D\bfu-{\bf Q}|)}{(|{\bf Q}|+|D\bfu-{\bf Q}|)}|D\bfu-{\bf Q}|\frac{|{\bf u} - \bm \ell_{r}|}{2r}\,\mathrm{d}x \\
& \le c \dashint_{B_{2r}}\frac{H_{|{\bf Q}|}(x,|D\bfu-{\bf Q}|)}{|D\bfu-\bfQ|} \frac{|{\bf u} - \bm \ell_{r}|}{2r}\,\mathrm{d}x \\
& \le \frac{1}{4\tilde c}\dashint_{B_{2r}}H_{|{\bf Q}|}(x,|D\bfu-{\bf Q}|)\,\mathrm{d}x+ c \dashint_{B_{2r}} H_{|{\bf Q}|}\left(x,\frac{|{\bf u} - \bm \ell_{r}|}{2r}\right)\,\mathrm{d}x\,.
\end{split}
\label{eq:stimaJ3}
\end{equation}
As for $J_2$, from \eqref{eq:1.11ok} 
and using Young's inequalities \eqref{eq:2.5ok} for $\phi(t)=H_{|\bfQ|}(x,t)$ for each $x\in\Omega$ and  $\phi(t)=\tilde\phi_{|\bfQ|}(t)$ with $\tilde\phi(t)=t^p$, 
\eqref{eq:6.23dieett},  
we obtain
\begin{equation}
\begin{split}
|J_{2}| & \lesssim  \dashint_{B_{2r}}\left(r^{\beta_0}H'(x,|\bfQ|)+r^\alpha |{\bf Q}|^{q-1}\right)|D{\bm \psi}|\,\mathrm{d}x \\
& \lesssim  \dashint_{B_{2r}}\left\{r^{\beta_0}+r^\alpha |{\bf Q}|^{q-p}\right\} (H_{|\bfQ|})'(x,|\bfQ|) |D{\bm \psi}|\,\mathrm{d}x \\
& \leq \frac{1}{4\tilde{c}}\dashint_{B_{2r}}\eta^q H_{|\bfQ|}(x,|D\bfu-{\bf Q}|)\,\mathrm{d}x + c \dashint_{B_{2r}}H_{|\bfQ|}\left(x,\frac{|\bfu-{\bm \ell}_{r}|}{2r}\right)\,\mathrm{d}x \\
& \qquad +  c \dashint_{B_{2r}}(H_{|\bfQ|})^*\left(x,\left\{r^{\beta_0}+r^\alpha |\bfQ|^{q-p}\right\} (H_{|\bfQ|})'(x,|\bfQ|) \right)\,\mathrm{d}x \\
& \leq \frac{1}{4\tilde{c}}\dashint_{B_{2r}}\eta^q H_{|\bfQ|}(x,|D\bfu-{\bf Q}|)\,\mathrm{d}x + c \dashint_{B_{2r}}H_{|\bfQ|}\left(x,\frac{|\bfu-{\bm \ell}_{r}|}{2r}\right)\,\mathrm{d}x \\
& \qquad + c(r^{\beta_0}+r^\alpha |\bfQ|^{q-p})^{\frac{q}{q-1}} H(x^+_{2r},|\bfQ|)  \,.
\end{split}
\label{eq:stimaJ2}
\end{equation}
Plugging the estimates \eqref{eq:stimaJ1}, \eqref{eq:stimaJ3} and \eqref{eq:stimaJ2} 
into \eqref{eq:4.2ok} and reabsorbing some terms we obtain \eqref{eq:caccioppoliII}. The proof of \eqref{eq:caccioppoliII} is then concluded. 
\end{proof}

As a consequence of Sobolev-Poincaré inequality Lemma~\ref{thm:sob-poincare0}, Lemma~\ref{lem:lemma4.1} for ${\bf Q}={\bf 0}$ and Gehring's lemma with increasing supports (Lemma~\ref{lem:gehring}), we deduce a  higher integrability result for $H(x, |D\bfu|)$:

\begin{lemma}\label{lem:high0} (Higher integrability)
Let $\bfu\in W^{1,1}(\Omega;\R^N)$ with $H(\cdot,|D\bfu|)\in L^1(\Omega)$ be a weak solution to  \eqref{system1}. There exist constants $\sigma_0>0$ and $c>0$ depending on $n$, $N$, $p$, $q$, $\nu$, $L$, $\alpha$ and $[a]_{C^{0,\alpha}}$ such that for any $B_{2r}\subset \Omega$ with $\|H(\cdot,|D\bfu|)\|_{L^1(B_{2r})}\le 1$, 
we have
\begin{equation*}
\begin{split}
&\bigg(\dashint_{B_{r}(x_0)} \left[H(x, |D\bfu|)\right]^{1+\sigma_0}\,\mathrm{d}x \bigg)^{\frac{1}{1+\sigma_0}}\leq c \dashint_{B_{2r}(x_0)} H(x, |D\bfu|)\,\mathrm{d}x \,.
\end{split}
\end{equation*} 
Moreover, for every $t\in (0,1]$ there exists  $c_{t}=c_{t}(n,N,p,q,\nu,L,\alpha,[a]_{C^{0,\alpha}},t)>0$ such that
\begin{equation}
\begin{split}
&\bigg(\dashint_{B_{r}(x_0)} \left[H(x, |D\bfu|)\right]^{1+\sigma_0}\,\mathrm{d}x \bigg)^{\frac{1}{1+\sigma_0}}\leq c_{t}\bigg(\dashint_{B_{2r}(x_0)} H(x, |D\bfu|)^t\,\mathrm{d}x\bigg)^{\frac{1}{t}}\,.
\end{split}
\label{eq:caccioppoliIbist0}
\end{equation} 
\end{lemma}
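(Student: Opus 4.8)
The plan is to derive the higher integrability estimate for $H(\cdot,|D\bfu|)$ by setting up a reverse H\"older inequality and then invoking Gehring's lemma with increasing supports, Lemma~\ref{lem:gehring}. First I would apply the Caccioppoli estimate \eqref{eq:caccioppoliII} in the special case ${\bf Q}={\bf 0}$: since $H_{|{\bf 0}|}(x,t)\sim H(x,t)$ (by \eqref{(2.6c)} with $a=0$) and $\bm\ell_{x_0,r,{\bf 0}}(x)=(\bfu)_{x_0,r}$ is constant, the estimate reduces to
\begin{equation*}
\dashint_{B_r(x_0)} H(x,|D\bfu|)\,\mathrm{d}x \leq c \dashint_{B_{2r}(x_0)} H\left(x,\frac{|\bfu-(\bfu)_{x_0,r}|}{2r}\right)\,\mathrm{d}x\,,
\end{equation*}
the extra term on the right-hand side of \eqref{eq:caccioppoliII} vanishing for ${\bf Q}={\bf 0}$.

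Next I would bound the right-hand side from above using the Sobolev-Poincar\'e inequality for double phase problems, Lemma~\ref{thm:sob-poincare0}, applied on $B_{2r}(x_0)$. This gives
\begin{equation*}
\dashint_{B_{2r}} H\left(x,\frac{|\bfu-(\bfu)_{x_0,2r}|}{2r}\right)\,\mathrm{d}x \leq c\left(1+[a]_{C^{0,\alpha}}\|D\bfu\|_{L^p(B_{2r})}^{q-p}\right)\left(\dashint_{B_{2r}}[H(x,|D\bfu|)]^{\theta_0}\,\mathrm{d}x\right)^{1/\theta_0}\,,
\end{equation*}
after first replacing $(\bfu)_{x_0,r}$ by $(\bfu)_{x_0,2r}$ at the cost of an absorbable constant (since averages over comparable balls differ in a controlled way, or simply by re-running the Caccioppoli argument on a slightly different pair of radii). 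The prefactor $1+[a]_{C^{0,\alpha}}\|D\bfu\|_{L^p(B_{2r})}^{q-p}$ is where the normalization $\|H(\cdot,|D\bfu|)\|_{L^1(B_{2r})}\le 1$ enters: it forces $\|D\bfu\|_{L^p(B_{2r})}^{q-p}$ to be controlled, so the prefactor is bounded by a structural constant, and the hypothesis of Lemma~\ref{lem:gehring} with $f=H(\cdot,|D\bfu|)$, $\gamma=\theta_0$ and $g=0$ is verified. Gehring's lemma then yields the self-improving exponent $1+\sigma_0>1$ and the first displayed inequality; the second inequality \eqref{eq:caccioppoliIbist0}, with the right-hand side lowered to an arbitrary exponent $t\in(0,1]$, follows from the second conclusion of Lemma~\ref{lem:gehring} together with a standard interpolation (a covering/iteration argument absorbing the $L^1$-average into the $L^t$-average via Young's inequality), exactly as in \cite[Theorem~6.6]{giustibook}.

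The main obstacle I anticipate is the bookkeeping around the $(q-p)$-power of $\|D\bfu\|_{L^p}$ in the Sobolev-Poincar\'e prefactor: one must be careful that the normalization $\|H(\cdot,|D\bfu|)\|_{L^1(B_{2r})}\le 1$ (hence in particular $\|D\bfu\|_{L^p(B_{2r})}^p\le |B_{2r}|$ after accounting for the average versus the integral, or rather $\|D\bfu\|_{L^p(B_{2r})}^p \le \|H(\cdot,|D\bfu|)\|_{L^1(B_{2r})}\le 1$) genuinely makes this prefactor a structural constant uniformly in $r\le 1$ and in the ball, so that Lemma~\ref{lem:gehring} applies with constants not depending on the solution. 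A secondary technical point is handling the mismatch between the Caccioppoli inequality (which compares radii $r$ and $2r$) and the form of Lemma~\ref{lem:gehring} (which compares $B$ and $2B$ with increasing supports on the right); this is routine but must be stated correctly so that the final constants $\sigma_0$ and $c$ depend only on the listed parameters $n,N,p,q,\nu,L,\alpha,[a]_{C^{0,\alpha}}$.
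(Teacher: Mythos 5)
Your proposal is correct and follows exactly the route the paper indicates (the paper gives no written proof, only the remark that the lemma follows from the Caccioppoli estimate with ${\bf Q}={\bf 0}$, the Sobolev--Poincar\'e inequality of Lemma~\ref{thm:sob-poincare0}, and Gehring's lemma with increasing supports). Your key observations — that the extra term in \eqref{eq:caccioppoliII} vanishes for ${\bf Q}={\bf 0}$, that $H_{|{\bf 0}|}=H$ (in fact exactly, by \eqref{eq:phi_shifted}, not merely up to constants via \eqref{(2.6c)}), and that the normalization $\|H(\cdot,|D\bfu|)\|_{L^1(B_{2r})}\le 1$ makes the Sobolev--Poincar\'e prefactor a structural constant on every sub-ball so that Lemma~\ref{lem:gehring} applies with $g=0$ — are precisely the points on which the argument rests.
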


\begin{remark} \label{rmk:smallness}
Lemma \ref{lem:high0} implies $H(\cdot,|D{\bfu}|) \in L^{1+\sigma_0}_{\mathrm{loc}}(\Omega)$. Then for each $\Omega'\Subset \Omega$, there exists $r_0 \in (0,1]$ such that  for any $B_{2r}
(x_0)\subset \Omega'$ with $r\in(0,r_0]$,
\begin{equation}\label{eq:assless1}
|B_{2r}(x_0)|\le 1
\quad \text{and}\quad
\int_{B_{2r}(x_0)} H(x,|D{\bfu}|)^{1+\sigma_0} \,\mathrm{d}x \le 1 \,.
\end{equation}
\end{remark}

\begin{lemma}\label{cor:reverse0}
Let $\bfu\in W^{1,1}(\Omega;\R^N)$ with $H(\cdot,|D\bfu|)\in L^1(\Omega)$ be a weak solution to  \eqref{system1}, and let $\sigma_0>0$ be the exponent of Lemma~\ref{lem:high0}. There exists a  constant $c=c(n,N,p,q,\nu,L,\alpha,[a]_{C^{0,\alpha}})>0$ such that for any $B_{2r}(x_0)\Subset \Omega$ satisfying \eqref{eq:assless1} with $r\le 1/2$, we have
\begin{equation}\label{eq:Hreverse0}
\begin{split}
&\bigg(\dashint_{B_{r}(x_0)} [H(x, |D\bfu|)]^{1+\sigma_0}\,\mathrm{d}x\bigg)^{\frac{1}{1+\sigma_0}}\leq c H^-_{B_{2r}(x_0)}\bigg( \dashint_{B_{2r}(x_0)}|D\bfu|\,\mathrm{d}x\bigg) \,.
\end{split}
\end{equation}
In particular, we have  
\begin{equation}\label{eq:H'reverse}
\dashint_{B_r(x_0)} (H^-_{B_{2r}(x_0)})'(|D\bfu |)\,\mathrm{d} x\le  c (H^{-}_{B_{2r}(x_0)})'\bigg(\dashint_{B_{2r}(x_0)} |D\bfu |\,\mathrm{d} x \bigg)\,.
\end{equation}
\end{lemma}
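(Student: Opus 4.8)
The plan is to combine the higher-integrability estimate of Lemma~\ref{lem:high0} with a pointwise comparison between $H$, $H^-_{B_{2r}(x_0)}$ and $H^+_{B_{2r}(x_0)}$ that exploits the smallness condition \eqref{eq:assless1} and the $C^{0,\alpha}$-continuity of $a$. First I would observe that, writing $B=B_{2r}(x_0)$ and $a^\pm_B$ for the sup/inf of $a$ over $B$, the H\"older continuity gives $a^+_B - a^-_B \le [a]_{C^{0,\alpha}}(2r)^\alpha$, so that for every $t\ge 0$
\[
H^+_B(t) = t^p + a^+_B t^q \le H^-_B(t) + [a]_{C^{0,\alpha}}(2r)^\alpha t^q \,,
\qquad
H^-_B(t)\le H(x,t)\le H^+_B(t) \ \text{ for } x\in B\,.
\]
Thus the task reduces to controlling $(2r)^\alpha \dashint_B |D\bfu|^q\,\mathrm{d}x$ by $H^-_B\big(\dashint_B |D\bfu|\big)$, plus a routine use of Gehring's lemma / Lemma~\ref{lem:high0}.

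The key step is the control of the $q$-term. Using \eqref{eq:caccioppoliIbist0} from Lemma~\ref{lem:high0} with a small exponent $t$, I would first pass from $L^{1+\sigma_0}$ of $H(\cdot,|D\bfu|)$ on $B_r(x_0)$ to $L^t$ of $H(\cdot,|D\bfu|)$ on $B$, then estimate $H(\cdot,|D\bfu|)^t \le (|D\bfu|^p)^t + (a |D\bfu|^q)^t$ and apply H\"older's inequality to split the high power $|D\bfu|^q$ into a factor of $|D\bfu|^p$ at a fractional power and a factor of $|D\bfu|^{p(1+\sigma_0)}$; the smallness $\int_B H(x,|D\bfu|)^{1+\sigma_0}\le 1$ in \eqref{eq:assless1} makes the latter factor bounded by $1$, and the exponent balance together with $q/p \le 1+\alpha/n$ produces exactly a surplus power of $r$ of the form $r^{\alpha(\sigma_0)}$ with $\alpha(\sigma_0) = \alpha - \frac{(q-p)n}{p(1+\sigma_0)}>0$ (possibly shrinking $\sigma_0$ so that this is positive, just as in Lemma~\ref{thm:sob-poincare}). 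After absorbing $(2r)^\alpha \le 1$, this shows $(2r)^\alpha \dashint_B |D\bfu|^q \lesssim \big(\dashint_B |D\bfu|^p\big) + (\text{lower order})$, and since $\dashint_B|D\bfu|^p \le \big(\dashint_B |D\bfu|\big)^p \cdot (\text{higher integrability factor}) \lesssim H^-_B\big(\dashint_B|D\bfu|\big)$ by Jensen combined once more with Lemma~\ref{lem:high0}, \eqref{eq:Hreverse0} follows.

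For \eqref{eq:H'reverse}, I would deduce it from \eqref{eq:Hreverse0} using $(H^-_B)'(t)\sim H^-_B(t)/t$ (valid by \eqref{ineq:phiast_phi_p}, since $H^-_B$ is an $N$-function satisfying \eqref{(2.1celokbis)} with $p_1=p$, $p_2=q$) together with Jensen's inequality applied to the concave majorant of $t\mapsto (H^-_B)'(t)$ obtained from Lemma~\ref{lem:lemma2.2ok}: indeed $(H^-_B)'$ is nondecreasing with $t\mapsto (H^-_B)'(t)/t$ nonincreasing, so there is a concave $\widetilde\Psi$ comparable to it, and $\dashint_{B_r}(H^-_B)'(|D\bfu|)\lesssim \dashint_{B_r}\widetilde\Psi(|D\bfu|)\le \widetilde\Psi\big(\dashint_{B_r}|D\bfu|\big)\lesssim (H^-_B)'\big(\dashint_{B_r}|D\bfu|\big)$, after enlarging the domain of averaging from $B_r$ to $B_{2r}$ in the last argument.

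The main obstacle I expect is the exponent bookkeeping in the second step: one has to choose $t$ and the splitting exponents in H\"older's inequality so that (i) the power of $|D\bfu|$ coming out is exactly $p$ (so that it can be absorbed into $H^-_B$ via Jensen), (ii) the residual integrability does not exceed $p(1+\sigma_0)$ so that the smallness in \eqref{eq:assless1} applies, and (iii) the resulting power of $r$ is strictly positive, which is precisely where the sharp condition $q/p\le 1+\alpha/n$ enters and why $\sigma_0$ may have to be taken smaller than the exponent furnished by Gehring's lemma. Everything else is a combination of Jensen's inequality, the $\Delta_2$-property of $H^-_B$, and the already-established higher integrability.
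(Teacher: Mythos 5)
Your argument for \eqref{eq:Hreverse0} is essentially the paper's: higher integrability \eqref{eq:caccioppoliIbist0} with a small exponent $t$ (the paper takes $t=1/q$), the pointwise comparison $H^+_{B_{2r}}(s)\le H^-_{B_{2r}}(s)+[a]_{C^{0,\alpha}}(2r)^\alpha s^q$, Jensen in the concave direction for the small powers (the paper packages this via Lemma~\ref{lem:lemma2.2ok} applied to $[H^+_{B_{2r}}]^{1/q}$), and finally the absorption $r^\alpha\big(\dashint_{B_{2r}}|D\bfu|\big)^{q-p}\lesssim r^{\alpha-\frac{n(q-p)}{p(1+\sigma_0)}}\le 1$ from H\"older, \eqref{eq:assless1} and \eqref{eq:pq}. (Two minor points: the quantity $\dashint_{B_{2r}}|D\bfu|^q$ you write need not be finite, so the $q$-term must be handled only after the average has been pulled inside via the small power $t\le 1/q$; and no shrinking of $\sigma_0$ is needed, since $\alpha-\frac{n(q-p)}{p(1+s_0)}$ is increasing in $s_0$ and already positive by \eqref{eq:pq}.)

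There is, however, a genuine gap in your derivation of \eqref{eq:H'reverse}. You claim that $t\mapsto (H^-_{B_{2r}})'(t)/t$ is nonincreasing so that Lemma~\ref{lem:lemma2.2ok} applies directly to $\Psi=(H^-_{B_{2r}})'$. This is false unless $p,q\le 2$: for instance $(t^p)'/t=p\,t^{p-2}$ is \emph{increasing} when $p>2$, and the paper explicitly treats the superquadratic case. For $q>2$ the function $(H^-_{B_{2r}})'$ grows superlinearly, has no concave majorant comparable to it, and the Jensen step $\dashint_{B_r}(H^-_{B_{2r}})'(|D\bfu|)\,\mathrm{d}x\lesssim (H^-_{B_{2r}})'\big(\dashint_{B_r}|D\bfu|\big)$ simply fails for general $D\bfu$ — it is itself a reverse H\"older inequality and cannot come for free. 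The repair is the paper's composition trick: write $(H^-_{B_{2r}})'(|D\bfu|)=\Psi\big(H^-_{B_{2r}}(|D\bfu|)\big)$ with $\Psi:=(H^-_{B_{2r}})'\circ (H^-_{B_{2r}})^{-1}$, which satisfies $\Psi(s)\sim s/(H^-_{B_{2r}})^{-1}(s)$; here $\Psi(s)/s=1/(H^-_{B_{2r}})^{-1}(s)$ \emph{is} nonincreasing, so Lemma~\ref{lem:lemma2.2ok} and Jensen give
\begin{equation*}
\dashint_{B_r}(H^-_{B_{2r}})'(|D\bfu|)\,\mathrm{d}x \le c\,\Psi\bigg(\dashint_{B_r}H^-_{B_{2r}}(|D\bfu|)\,\mathrm{d}x\bigg)\le c\,\Psi\bigg(\dashint_{B_r}H(x,|D\bfu|)\,\mathrm{d}x\bigg)\,,
\end{equation*}
and the inner average is then controlled by $H^-_{B_{2r}}\big(\dashint_{B_{2r}}|D\bfu|\big)$ through the already proved \eqref{eq:Hreverse0} (together with H\"older to pass from the $L^1$ to the $L^{1+\sigma_0}$ average), after which monotonicity and doubling of $\Psi$ yield $(H^-_{B_{2r}})'\big(\dashint_{B_{2r}}|D\bfu|\big)$. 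In other words, \eqref{eq:H'reverse} must be \emph{deduced from} \eqref{eq:Hreverse0}, not proved by an independent concavity argument.
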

\begin{proof} 
For simplicity, we omit writing the center $x_0$ in the proof, and use the shorthands $H^\pm_{2r}$ in place of  $H^\pm_{B_{2r}(x_0)}$. We first note that \eqref{eq:assless1} and Young's inequality imply
\begin{equation}\label{eq:assless11}
\int_{B_{2r}} H(x,|D\bfu|) \,\mathrm{d}x \le \frac{1}{1+\sigma_0}\int_{B_{2r}} H(x,|D\bfu|)^{1+\sigma_0} \,\mathrm{d}x + \frac{\sigma_0}{1+\sigma_0} |B_{2r}| \le 1\,.
\end{equation}
Therefore, we  obtain \eqref{eq:caccioppoliIbist0} which yields, for $t=\frac{1}{q}$, 
\begin{equation}
\begin{split}
&\bigg(\dashint_{B_{r}} \left[H(x, |D\bfu|)\right]^{1+ \sigma_0}\,\mathrm{d}x \bigg)^{\frac{1}{1+\sigma_0}}\leq  c \bigg(\dashint_{B_{2r}} [H^+_{2r}(|D\bfu|)]^\frac{1}{q}\,\mathrm{d}x\bigg)^q\,.
\end{split}
\label{eq:firstestimate0}
\end{equation}
Now, since the function $\Psi(t):=[H^+_{2r}(t)]^\frac{1}{q}$ complies with the assumptions of Lemma \ref{lem:lemma2.2ok}, by Jensen's inequality we conclude that
\begin{equation}
\begin{split}
&\bigg(\dashint_{B_{2r}} [H^+_{2r}(|D\bfu|)]^\frac{1}{q}\,\mathrm{d}x\bigg)^q\\
& \lesssim  H^+_{2r}\bigg(\dashint_{B_{2r}}|D\bfu|\,\mathrm{d}x\bigg)
\lesssim  H^-_{2r}\bigg(\dashint_{B_{2r}}|D\bfu|\,\mathrm{d}x\bigg) + r^\alpha \bigg(\dashint_{B_{2r}}|D\bfu|\,\mathrm{d}x \bigg)^{q} \,.  
\end{split}
\label{eq:H1/q0}
\end{equation}
On the other hand,  using  H\"older's inequality, \eqref{eq:assless1} and \eqref{eq:pq}  we have 
\begin{equation}\label{eq:estimq-p0}
r^\alpha (|D\bfu|)_{2r}^{q-p}   \le r^\alpha (|D\bfu|^p)_{2r}^{\frac{q-p}{p}} \le r^\alpha \left(|D\bfu|^{p(1+\sigma_0)}\right)_{2r}^{\frac{q-p}{p(1+\sigma_0)}}
 \lesssim r^{\alpha-n\frac{q-p}{p(1+\sigma_0)}} \le 1 \,.
\end{equation}
Consequently, applying the previous inequalities \eqref{eq:H1/q0} and  \eqref{eq:estimq-p0}  
 to \eqref{eq:firstestimate0},
we obtain \eqref{eq:Hreverse0}.

Furthermore, since 
$$
\big((H^-_{B_{2r}})'\circ (H^-_{2r})^{-1} \big) (t) \sim  \frac{t}{(H^-_{B_{2r}})^{-1} (t)}
\quad \text{and} \quad
t \, \mapsto \,   \frac{1}{(H^-_{2r})^{-1} (t)} \ \text{ is non-increasing},
$$
by Lemma \ref{lem:lemma2.2ok}  with $\Psi(t)= t/(H^-_{2r})^{-1} (t)$, Jensen's inequality and  \eqref{eq:Hreverse0}, 
the estimate \eqref{eq:H'reverse} follows as:
$$\begin{aligned}
\dashint_{B_r} (H^-_{2r})'(|D\bfu |)\,\mathrm{d} x & \le c\left((H^-_{B_{2r}})'\circ (H^-_{2r})^{-1}\right) \bigg(\dashint_{B_r} H^-_{2r}(|D\bfu |)\,\mathrm{d} x\bigg)\\
&\le c\left((H^-_{2r})'\circ (H^-_{2r})^{-1}\right) \bigg(\dashint_{B_r} H(x, |D\bfu |)\,\mathrm{d} x\bigg) \\
&\le c (H^-_{2r})' \bigg(\dashint_{B_{2r}} |D\bfu |\,\mathrm{d} x\bigg)\,. 
\end{aligned}$$
This completes the proof.
\end{proof}

{Now, we are in position to establish a higher integrability result for $H_{|{\bf Q}|}(x, |D\bfu-{\bf Q}|)$, ${\bf Q}\neq {\bf 0}$. 
By Lemma \ref{lem:high0} we know that $D{\bf u}\in L^{p(1+\sigma_0)}_{\mathrm{loc}}(\Omega;\R^{N\times n})$, and, in view of Remark \ref{rmk:smallness}, we can find balls $B_r$ such that $\|D {\bf u}  \|_{L^{p(1+\sigma_0)}(B_r)}\le 1$. Thus, for such balls the Sobolev-Poincaré inequality of Lemma~\ref{thm:sob-poincare} holds with $s_0=\sigma_0$. Recall the constant $\alpha(s_0)$ in the lemma.
The following higher integrability result then follows again 
from Lemma~\ref{lem:lemma4.1} and Lemma~\ref{lem:gehring}: 
\begin{lemma}\label{lem:high}  (Improved higher integrability)
Let $\bfu\in W^{1,1}(\Omega;\R^N)$ with $H(\cdot,|D\bfu|)\in L^1(\Omega)$ be a weak solution to  \eqref{system1}. There exist constants $\sigma>0$ and $c>0$ depending on $n$, $N$, $p$, $q$, $\nu$, $L$, $\alpha$ and $[a]_{C^{0,\alpha}}$ such that for any $B_{2r}\subset \Omega$ with $\|H(\cdot,|D\bfu|)\|_{L^1(B_{2r})}\le 1$ and ${\bf Q}\in\mathbb{R}^{N\times n}$ with $0<(2r)^\alpha|{\bf Q}|^{q-p}\le 1$, we have
\begin{equation}
\begin{split}
&\bigg(\dashint_{B_{r}(x_0)} \left[H_{|{\bf Q}|}(x, |D\bfu-{\bf Q}|)\right]^{1+\sigma}\,\mathrm{d}x \bigg)^{\frac{1}{1+\sigma}} \\
&\leq c \dashint_{B_{2r}(x_0)} H_{|{\bf Q}|}(x, |D\bfu-{\bf Q}|)\,\mathrm{d}x   + c 
\left[(r^{\beta_0}+r^\alpha |\bfQ|^{q-p})^{\frac{q}{q-1}}+r^{\alpha_0} + r^\alpha|{\bf Q}|^{q-p}\right]
H^+_{B_{2r}(x_0)}(|\bfQ|) \,,
\end{split}
\label{eq:caccioppoliIbis}
\end{equation}
where 
\begin{equation}\label{alpha0}
\alpha_0:=\alpha(\sigma_0)= \alpha-\frac{(q-p)n}{p(1+\sigma_0)} .
\end{equation} 

Moreover, for every $t\in (0,1]$ there exists  $c_{t}=c_{t}(n,N,p,q,\nu,L,\alpha,[a]_{C^{0,\alpha}},t)>0$ such that
\begin{equation}
\begin{split}
\bigg(\dashint_{B_{r}(x_0)} \left[H_{|{\bf Q}|}(x, |D\bfu-{\bf Q}|)\right]^{1+\sigma}\,\mathrm{d}x \bigg)^{\frac{1}{1+\sigma}} 
&\leq c_{t} \Bigg\{\bigg(\dashint_{B_{2r}(x_0)} H_{|{\bf Q}|}(x, |D\bfu-{\bf Q}|)^t\,\mathrm{d}x\bigg)^{\frac{1}{t}} \\
&\qquad   +  \left[(r^{\beta_0}+r^\alpha |\bfQ|^{q-p})^{\frac{q}{q-1}}+r^{\alpha_0} + r^\alpha|{\bf Q}|^{q-p}\right] H^+_{B_{2r}(x_0)}(|\bfQ|)\Bigg\}\,.
\end{split}
\label{eq:caccioppoliIbist}
\end{equation} 
\end{lemma}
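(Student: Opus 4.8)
The proof follows the classical Gehring scheme: one first combines the Caccioppoli estimate of Lemma~\ref{lem:lemma4.1} with the shifted Sobolev--Poincar\'e inequality of Lemma~\ref{thm:sob-poincare} to obtain a reverse H\"older inequality for $f:=H_{|{\bf Q}|}(\,\cdot\,,|D\bfu-{\bf Q}|)$ on all sub-balls, and then one invokes Lemma~\ref{lem:gehring}. By Lemma~\ref{lem:high0} we already know $D\bfu\in L^{p(1+\sigma_0)}_{\rm loc}(\Omega)$; after the reduction to small radii described in Remark~\ref{rmk:smallness} we have $\int_{B_{2r}(x_0)}H(x,|D\bfu|)^{1+\sigma_0}\,\mathrm{d}x\le1$, hence $\|D\bfu\|_{L^{p(1+\sigma_0)}(B_{2r}(x_0))}\le1$, so that Lemma~\ref{thm:sob-poincare} is available on every ball contained in $B_{2r}(x_0)$ with the choice $s_0=\sigma_0$; the corresponding gain-exponent is $\alpha(\sigma_0)=\alpha_0$, which is strictly positive because of \eqref{eq:pq}.

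Fix a sub-ball $B_\rho(y)$ with $B_{2\rho}(y)\subset B_{2r}(x_0)$; note that $\rho\le r\le1$. Since the proof of Lemma~\ref{lem:lemma4.1} only uses that the subtracted map is affine with gradient ${\bf Q}$, it applies verbatim with $\bm\ell(x):=(\bfu)_{y,2\rho}+{\bf Q}(x-y)$. Setting $\bfw:=\bfu-\bm\ell$, so that $(\bfw)_{B_{2\rho}(y)}={\bf 0}$ and $D\bfw=D\bfu-{\bf Q}$, and using the pointwise inequality $(H^-_{B_{2\rho}(y)})_{|{\bf Q}|}(t)\le H_{|{\bf Q}|}(x,t)$ (valid for $x\in B_{2\rho}(y)$, since $a^-_{B_{2\rho}(y)}\le a(x)$), the combination of Lemma~\ref{lem:lemma4.1} (on $B_\rho(y)\subset B_{2\rho}(y)$) and Lemma~\ref{thm:sob-poincare} (on $B_{2\rho}(y)$) gives
\begin{equation*}
\dashint_{B_\rho(y)} f\,\mathrm{d}x \le c\left(\dashint_{B_{2\rho}(y)} f^{\theta}\,\mathrm{d}x\right)^{1/\theta} + c\,\Lambda\,,\qquad \Lambda:=\big[(r^{\beta_0}+r^\alpha|{\bf Q}|^{q-p})^{\frac{q}{q-1}}+r^{\alpha_0}+r^\alpha|{\bf Q}|^{q-p}\big]\,H^+_{B_{2r}(x_0)}(|{\bf Q}|)\,.
\end{equation*}
Here I passed from the errors at radius $\rho$ to radius $r$ using that $\rho\mapsto\rho^{\beta_0},\rho^{\alpha_0},\rho^{\alpha}$ and $s\mapsto s^{q/(q-1)}$ are nondecreasing and $\rho\le r\le1$, I bounded $|{\bf Q}|^p\le H^+_{B_{2\rho}(y)}(|{\bf Q}|)$, and I used $H^+_{B_{2\rho}(y)}(|{\bf Q}|)\le H^+_{B_{2r}(x_0)}(|{\bf Q}|)$ since $a$ has a larger supremum over the larger ball. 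Note that $\Lambda$ is a genuine constant, independent both of the sub-ball and of $x$.

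Finally, I apply Lemma~\ref{lem:gehring} with $B_0=B_{2r}(x_0)$, $\gamma=\theta$, the above $f$ (which lies in $L^1(B_0)$ because $H_{|{\bf Q}|}(x,|D\bfu-{\bf Q}|)\lesssim H(x,|D\bfu|)+H(x,|{\bf Q}|)\in L^1(B_0)$), and $g\equiv c\Lambda$ (a constant, so $g\in L^{s_0}(B_0)$ for every $s_0>1$). This produces an exponent $s_1>1$; setting $\sigma:=s_1-1$ and taking $B=B_r(x_0)$ (so that $2B=B_{2r}(x_0)$), the conclusion of Lemma~\ref{lem:gehring} with $s_2=1+\sigma$ is exactly \eqref{eq:caccioppoliIbis}. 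Then \eqref{eq:caccioppoliIbist} is deduced from \eqref{eq:caccioppoliIbis} by the standard interpolation-and-reabsorption argument, namely the very one that turns the first assertion of Lemma~\ref{lem:high0} into \eqref{eq:caccioppoliIbist0}, which replaces $\dashint_{B_{2r}(x_0)}f$ by $\big(\dashint_{B_{2r}(x_0)}f^{t}\big)^{1/t}$ for $t\in(0,1]$. The one point that genuinely needs care is the bookkeeping of the error term $\Lambda$ --- keeping it a constant and controlling it by the claimed powers of $r$ times $H^+_{B_{2r}(x_0)}(|{\bf Q}|)$ --- together with checking that the $L^{p(1+\sigma_0)}$-smallness required by Lemma~\ref{thm:sob-poincare} persists on every sub-ball of $B_{2r}(x_0)$; both rest on the sharp balance $\alpha_0>0$ granted by \eqref{eq:pq}.
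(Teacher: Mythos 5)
Your proposal is correct and follows exactly the route the paper intends: the paper gives no written-out proof, stating only that the result follows from the Caccioppoli estimate of Lemma~\ref{lem:lemma4.1}, the shifted Sobolev--Poincar\'e inequality of Lemma~\ref{thm:sob-poincare} with $s_0=\sigma_0$, and Gehring's Lemma~\ref{lem:gehring}, and your write-up supplies precisely those details with the correct uniform bookkeeping of the error term $\Lambda$ over all sub-balls. The only caveat --- shared with the paper itself --- is that the normalization $\|D\bfu\|_{L^{p(1+\sigma_0)}(B_{2r})}\le 1$ needed to invoke Lemma~\ref{thm:sob-poincare} (applied to $\bfw=\bfu-\bm\ell$, whose gradient is $D\bfu-{\bf Q}$, so the extra $|{\bf Q}|$-contribution must be traced into the $r^\alpha|{\bf Q}|^{q-p}$ error) rests on the stronger smallness \eqref{eq:assless1} of Remark~\ref{rmk:smallness} rather than on the bare $L^1$-bound stated in the lemma's hypothesis.
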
}

\subsection{Reverse H\"older type estimates}

By the higher integrability result in Lemma~\ref{lem:high}, under assumption \eqref{eq:assless1}, we can obtain from  the following reverse H\"older type estimates for $|D\bfu - {\bf Q}|$ with the shifted $N$-function $H_{|{\bf Q}|}$ when ${\bf Q}=(D\bfu)_{x_0,2r}$.

\begin{lemma}\label{cor:reverse}
Let $\bfu\in W^{1,1}(\Omega;\R^N)$ with $H(\cdot,|D\bfu|)\in L^1(\Omega)$ be a weak solution to  \eqref{system1}, and let $\sigma>0$ be the exponent of Lemma~\ref{lem:high}. There exists a  constant $c=c(n,N,p,q,\nu,L,\alpha,[a]_{C^{0,\alpha}})>0$ such that for any $B_{2r}\Subset \Omega$ satisfying \eqref{eq:assless1} with $r\le 1/2$
and for ${\bf Q}= (D\bfu)_{x_0,2r}$, we have
\begin{equation}\label{eq:Hreverse}
\begin{split}
&\bigg(\dashint_{B_{r}(x_0)} [H_{|{\bf Q}|}(x, |D\bfu-{\bf Q}|)]^{1+\sigma}\,\mathrm{d}x\bigg)^{\frac{1}{1+\sigma}} \\
& \qquad \leq c (H^-_{B_{2r}(x_0)})_{|{\bf Q}|}\bigg( \dashint_{B_{2r}(x_0)}|D\bfu-{\bf Q}|\,\mathrm{d}x\bigg)
  + c  r^{\alpha_1} H^-_{B_{2r}(x_0)}(|\bfQ|)\,,
\end{split}
\end{equation}
where
\begin{equation}\label{alpha1}
\alpha_1:=\min\left\{ \frac{\beta_0q}{q-1},\alpha_0\right\}\,,
\end{equation}
and the constants $q$, $\beta_0$ and $\alpha_0$ are from \eqref{eq:H}, \eqref{eq:1.12ok} and \eqref{alpha0}, respectively.
\end{lemma}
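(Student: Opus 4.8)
The plan is to combine the higher integrability estimate of Lemma~\ref{lem:high} with the Caccioppoli inequality of Lemma~\ref{lem:lemma4.1} and a Sobolev--Poincar\'e inequality, exactly in the spirit of the proof of Lemma~\ref{cor:reverse0}, but now working with the shifted $N$-function $H_{|{\bf Q}|}$ and the special choice ${\bf Q}=(D\bfu)_{x_0,2r}$. For brevity I will suppress the center $x_0$ and write $H^\pm_{2r}$ for $H^\pm_{B_{2r}(x_0)}$. First I would invoke \eqref{eq:caccioppoliIbist} of Lemma~\ref{lem:high} with the exponent $t=\tfrac1q\in(0,1]$, which gives
$$
\bigg(\dashint_{B_{r}} [H_{|{\bf Q}|}(x,|D\bfu-{\bf Q}|)]^{1+\sigma}\,\mathrm dx\bigg)^{\frac1{1+\sigma}}
\leq c\bigg(\dashint_{B_{2r}} H_{|{\bf Q}|}(x,|D\bfu-{\bf Q}|)^{1/q}\,\mathrm dx\bigg)^{q}
 + c\, r^{\alpha_1}H^+_{2r}(|{\bf Q}|)\,,
$$
after noting that $(r^{\beta_0}+r^\alpha|{\bf Q}|^{q-p})^{q/(q-1)}\lesssim r^{\beta_0 q/(q-1)}+r^{\alpha q/(q-1)}\lesssim r^{\beta_0 q/(q-1)}$ since $(2r)^\alpha|{\bf Q}|^{q-p}\le1$ and $\alpha_0\le\alpha\le\alpha q/(q-1)$, and absorbing $r^{\alpha_0}+r^\alpha|{\bf Q}|^{q-p}\lesssim r^{\alpha_0}$ likewise, so that the whole bracket is $\lesssim r^{\alpha_1}$ with $\alpha_1$ as in \eqref{alpha1}.

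Next I would estimate the $L^{1/q}$-average of $H_{|{\bf Q}|}(x,|D\bfu-{\bf Q}|)$. Since $\Psi(t):=[H^+_{2r}(t)]^{1/q}$ (and likewise $[(H^+_{2r})_{|{\bf Q}|}(t)]^{1/q}$, using that shifted $N$-functions still have $t\mapsto \phi(t)/t$ non-increasing and $\phi$ non-decreasing) satisfies the hypotheses of Lemma~\ref{lem:lemma2.2ok}, Jensen's inequality yields
$$
\bigg(\dashint_{B_{2r}} H_{|{\bf Q}|}(x,|D\bfu-{\bf Q}|)^{1/q}\,\mathrm dx\bigg)^{q}
\lesssim \bigg(\dashint_{B_{2r}} (H^+_{2r})_{|{\bf Q}|}(|D\bfu-{\bf Q}|)^{1/q}\,\mathrm dx\bigg)^{q}
\lesssim (H^+_{2r})_{|{\bf Q}|}\bigg(\dashint_{B_{2r}}|D\bfu-{\bf Q}|\,\mathrm dx\bigg)\,,
$$
using $H_{|{\bf Q}|}(x,t)\le (H^+_{2r})_{|{\bf Q}|}(t)$. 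Then I split $H^+_{2r}=H^-_{2r}+(a^+_{2r}-a^-_{2r})t^q$ with $a^+_{2r}-a^-_{2r}\le [a]_{C^{0,\alpha}}(2r)^\alpha\lesssim r^\alpha$, to pass from $(H^+_{2r})_{|{\bf Q}|}$ to $(H^-_{2r})_{|{\bf Q}|}$ at the cost of a term $r^\alpha(|{\bf Q}|+(|D\bfu-{\bf Q}|)_{2r})^{q-2}(|D\bfu-{\bf Q}|)_{2r}^2$, and bound this by $r^\alpha|{\bf Q}|^{q-p}(H^-_{2r})_{|{\bf Q}|}((|D\bfu-{\bf Q}|)_{2r}) + r^\alpha (|D\bfu-{\bf Q}|)_{2r}^{q}$ (splitting further into $|D\bfu-{\bf Q}|\lesssim|{\bf Q}|$ and $|D\bfu-{\bf Q}|\gtrsim|{\bf Q}|$ regimes and using \eqref{(2.6b)}), the first piece being already of the desired form up to $r^\alpha|{\bf Q}|^{q-p}\le1$.

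Finally I must dispose of the leftover term $r^\alpha (|D\bfu-{\bf Q}|)_{2r}^{q}$. Here, with ${\bf Q}=(D\bfu)_{x_0,2r}$, I would use Jensen together with the higher integrability of Lemma~\ref{lem:high0} (which gives $D\bfu\in L^{p(1+\sigma_0)}_{\rm loc}$ and $\|D\bfu\|_{L^{p(1+\sigma_0)}(B_{2r})}\lesssim 1$ under \eqref{eq:assless1}) and H\"older's inequality, just as in \eqref{eq:estimq-p0}, to estimate
$$
r^\alpha (|D\bfu-{\bf Q}|)_{2r}^{q}
= r^\alpha (|D\bfu-{\bf Q}|)_{2r}^{q-p}\,(|D\bfu-{\bf Q}|)_{2r}^{p}
\lesssim r^{\alpha - \frac{(q-p)n}{p(1+\sigma_0)}}\,(|D\bfu-{\bf Q}|)_{2r}^{p}
= r^{\alpha_0}(|D\bfu-{\bf Q}|)_{2r}^{p}\,,
$$
and then $(|D\bfu-{\bf Q}|)_{2r}^{p}\lesssim (H^-_{2r})_{|{\bf Q}|}((|D\bfu-{\bf Q}|)_{2r}) + |{\bf Q}|^{p}\le (H^-_{2r})_{|{\bf Q}|}((|D\bfu-{\bf Q}|)_{2r}) + H^-_{2r}(|{\bf Q}|)$ via $\phi(t)\lesssim\phi_{|{\bf Q}|}(t)+\phi(|{\bf Q}|)$ with $\phi(t)=t^p$. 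Collecting all terms gives the right-hand side of \eqref{eq:Hreverse} with exponent $r^{\alpha_1}$, $\alpha_1=\min\{\beta_0 q/(q-1),\alpha_0\}$. The main obstacle I anticipate is the bookkeeping in passing from $H^+_{2r}$ to $H^-_{2r}$ inside the shifted function and controlling the resulting $r^\alpha t^q$ correction uniformly in the two regimes $|D\bfu-{\bf Q}|\lessgtr|{\bf Q}|$; everything else is a routine adaptation of Lemma~\ref{cor:reverse0}.
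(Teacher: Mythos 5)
Your proposal is correct and follows essentially the same route as the paper's proof: apply \eqref{eq:caccioppoliIbist} with $t=1/q$, use Lemma~\ref{lem:lemma2.2ok} and Jensen for $[(H^+_{2r})_{|\bfQ|}]^{1/q}$, pay an $r^{\alpha}t^q$ correction when passing from $H^+_{2r}$ to $H^-_{2r}$, and absorb it via $r^{\alpha}(|D\bfu|)_{2r}^{q-p}\lesssim r^{\alpha_0}$ together with $\phi(t)\lesssim\phi_{|\bfQ|}(t)+\phi(|\bfQ|)$ for $\phi(t)=t^p$. The only points left implicit (verifying $(2r)^{\alpha}|\bfQ|^{q-p}\le 1$ so that Lemma~\ref{lem:high} applies, and converting the final $H^+_{2r}(|\bfQ|)$ into $H^-_{2r}(|\bfQ|)$) are one-line consequences of the same H\"older estimate, exactly as in the paper.
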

\begin{proof} 
We adopt here the same notation and perform a similar argument as for Lemma \ref{cor:reverse0}. Again, as in \eqref{eq:assless11}, Young's inequality implies $\int_{B_{2r}(x_0)} H(x,|D\bfu|) \,\mathrm{d}x \leq 1$. 
From this we also deduce that $\|D\bfu\|_{L^{p}(B_{2r})}\leq1$, whence using H\"older's inequality, \eqref{eq:pq} and the facts that $2r\le 1$ and $|B_1|>1$, we obtain
$$
(2r)^\alpha |\bfQ|^{q-p} \leq r^\alpha (|D\bfu|^p)_{2r}^{\frac{q-p}{p}} \leq  (2r)^{\alpha-n\frac{q-p}{p}} |B_1|^{-\frac{q-p}{p}} \leq 1\,. 
$$
Therefore, when ${\bf Q}= (D\bfu)_{2r}$, we  obtain \eqref{eq:caccioppoliIbist} which yields, for $t=\frac{1}{q}$, 
\begin{equation}
\begin{split}
&\bigg(\dashint_{B_{r}} \left[H_{|{\bf Q}|}(x, |D\bfu-{\bf Q}|)\right]^{1+\sigma}\,\mathrm{d}x \bigg)^{\frac{1}{1+\sigma}} \\
&\leq c \bigg(\dashint_{B_{2r}} [(H^+_{2r})_{|{\bf Q}|}(|D\bfu-{\bf Q}|)]^\frac{1}{q}\,\mathrm{d}x\bigg)^q  +c \left[(r^{\beta_0}+r^\alpha |\bfQ|^{q-p})^{\frac{q}{q-1}}+r^{\alpha_0} + r^\alpha|{\bf Q}|^{q-p}\right]  H^+_{2r}(|\bfQ|)\,.
\end{split}
\label{eq:firstestimate}
\end{equation}
Now, since the function $\Psi(t):=[(H^+_{2r})_{|{\bf Q}|}(t)]^\frac{1}{q}$ complies with the assumptions of Lemma \ref{lem:lemma2.2ok}, by Jensen's inequality we conclude that
\begin{equation}
\begin{split}
&\bigg(\dashint_{B_{2r}} [(H^+_{2r})_{|{\bf Q}|}(|D\bfu-{\bf Q}|)]^\frac{1}{q}\,\mathrm{d}x\bigg)^q\\
& \lesssim  (H^+_{2r})_{|{\bf Q}|}\bigg(\dashint_{B_{2r}}|D\bfu-{\bf Q}|\,\mathrm{d}x\bigg)
\lesssim  (H^-_{2r})_{|{\bf Q}|}\bigg(\dashint_{B_{2r}}|D\bfu-{\bf Q}|\,\mathrm{d}x\bigg) + r^\alpha \bigg(\dashint_{B_{2r}}|D\bfu-{\bf Q}|\,\mathrm{d}x + |\bfQ|\bigg)^{q} \,.  
\end{split}
\label{eq:H1/q}
\end{equation}
On the other hand,  using  H\"older's inequality, \eqref{eq:assless1} and \eqref{eq:pq}  we have 
\begin{equation}\label{eq:estimq-p}
r^\alpha |\bfQ|^{q-p} \le r^\alpha (|D\bfu|)_{2r}^{q-p}   \le r^\alpha (|D\bfu|^p)_{2r}^{\frac{q-p}{p}} \le r^\alpha \left(|D\bfu|^{p(1+\sigma_0)}\right)_{2r}^{\frac{q-p}{p(1+\sigma_0)}}
 \lesssim r^{\alpha-n\frac{q-p}{p(1+\sigma_0)}} = r^{\alpha_0}\,.
\end{equation}
Thus, using \eqref{(2.6c)}
it holds that
\begin{equation}
\begin{split}
&r^\alpha \bigg(\dashint_{B_{2r}}|D\bfu-{\bf Q}|\,\mathrm{d}x + |\bfQ|\bigg)^{q}  
\lesssim r^{\alpha_0} \bigg(\dashint_{B_{2r}}|D\bfu-{\bf Q}|\,\mathrm{d}x + |\bfQ|\bigg)^{p}  \\
&\lesssim r^{\alpha_0} \bigg\{(H^-_{2r})_{|{\bf Q}|}\bigg( \dashint_{B_{2r}(x_0)}|D\bfu-{\bf Q}|\,\mathrm{d}x\bigg) + (H^-_{B_{2r}(x_0)}) (|{\bf Q}|) \bigg\}
\end{split}
\label{eq:Hshiftestim}
\end{equation}
Consequently, applying the previous inequalities \eqref{eq:H1/q}, \eqref{eq:estimq-p}  and  \eqref{eq:Hshiftestim}  to \eqref{eq:firstestimate}, 
we obtain \eqref{eq:Hreverse}. This concludes the proof.
\end{proof}

Finally, we derive comparison estimates concerned with the functions $H(x,t)$ and $H_{B_{2r}}^-(t)$. Note that $H(x,t)-H_{B_{2r}}^-(t)=(a(x)-a^-_{{2r}})t^q$ and  $H'(x,t)-(H_{B_{2r}}^-)'(t)=q(a(x)-a^-_{{2r}})t^{q-1}$.

\begin{lemma}\label{lem:comparison}
Let $\bfu\in W^{1,1}(\Omega;\R^N)$ with $H(\cdot,|D\bfu|)\in L^1(\Omega)$ be a weak solution to \eqref{system1}. There exists $\alpha_2=\alpha_2(n,N,p,q,\nu,L,\alpha,[a]_{C^{0,\alpha}})>0$ such that for any $B_{2r}(x_0)\Subset \Omega$ satisfying \eqref{eq:assless1} with $r\le 1/2$, we have
\begin{equation}\label{eq:comparison1}
 \dashint_{B_r(x_0)} (a(x)-a^-_{x_0,2r}) |D\bfu|^{q-1}\,\mathrm{d} x  \le cr^{\alpha_2} (H^{-}_{B_{2r}(x_0)})' \bigg( \dashint_{B_{2r}(x_0)} |D\bfu | \,\mathrm{d} x \bigg)
\end{equation}
and
\begin{equation}\label{eq:comparison2}
 \dashint_{B_r(x_0)} (a(x)-a^-_{x_0,2r}) |D\bfu|^{q}\,\mathrm{d} x  \le cr^{\alpha_2} H^{-}_{B_{2r}(x_0)}\bigg( \dashint_{B_{2r}(x_0)} |D\bfu | \,\mathrm{d} x \bigg)
\end{equation}
for some $c=c(n,N,p,q,\nu,L,\alpha,[a]_{C^{0,\alpha}})>0$. 
\end{lemma}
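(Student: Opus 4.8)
\textbf{Proof plan for Lemma~\ref{lem:comparison}.}
The plan is to reduce both inequalities to the higher integrability and reverse H\"older machinery already established, using the H\"older continuity of $a$ together with the condition $\frac{q}{p}\le 1+\frac{\alpha}{n}$ to absorb the extra factor $r^\alpha$ into a positive power of $r$. First I would treat \eqref{eq:comparison2}, which is the cleaner of the two. Since $0\le a(x)-a^-_{x_0,2r}\le [a]_{C^{0,\alpha}}(2r)^\alpha$ on $B_{2r}(x_0)$, I bound the left-hand side by $c\,r^\alpha \dashint_{B_r}|D\bfu|^q\,\mathrm{d}x$. Now $|D\bfu|^q \le H^+_{B_{2r}}(|D\bfu|) \le H(\cdot,|D\bfu|)+ (a^+_{2r}-a(x))|D\bfu|^q \lesssim H(\cdot,|D\bfu|) + r^\alpha|D\bfu|^q$, so after reabsorbing (using $r$ small, via \eqref{eq:assless1} and \eqref{eq:estimq-p0}-type bounds) one gets $\dashint_{B_r}|D\bfu|^q \lesssim \dashint_{B_r} H(\cdot,|D\bfu|)$. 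Then apply the reverse H\"older estimate \eqref{eq:Hreverse0} (with exponent $1+\sigma_0$, whose gain I only need at power $1$) to pass from the average of $H(\cdot,|D\bfu|)$ on $B_r$ to $H^-_{B_{2r}}\big(\dashint_{B_{2r}}|D\bfu|\big)$. It remains to check that the leftover power of $r$ is strictly positive: here I would invoke \eqref{eq:pq} in the sharp form $\alpha - \frac{(q-p)n}{p(1+\sigma_0)}=\alpha_0>0$, so that combining $r^\alpha$ with the control $r^\alpha|\bfQ|^{q-p}\lesssim r^{\alpha_0}$ type bounds gives a genuine $r^{\alpha_2}$ with $\alpha_2>0$ depending only on the structure constants and $\sigma_0$.

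For \eqref{eq:comparison1} I would proceed in parallel: bound the integrand by $c\,r^\alpha |D\bfu|^{q-1}$, then observe $|D\bfu|^{q-1}\le (H^+_{B_{2r}})'(|D\bfu|)/q \lesssim (H^-_{B_{2r}})'(|D\bfu|) + r^\alpha |D\bfu|^{q-1}$, reabsorb the last term for $r$ small, and apply \eqref{eq:H'reverse} to get $\dashint_{B_r}(H^-_{B_{2r}})'(|D\bfu|)\lesssim (H^-_{B_{2r}})'\big(\dashint_{B_{2r}}|D\bfu|\big)$. Again the residual power of $r$ is positive by the same argument as above. A minor technical point to handle carefully: to reabsorb $r^\alpha|D\bfu|^{q}$ (resp.\ $r^\alpha|D\bfu|^{q-1}$) into $H(\cdot,|D\bfu|)$ (resp.\ $(H^-_{B_{2r}})'(|D\bfu|)$) pointwise one cannot simply reabsorb on the integrand level since the factor $r^\alpha$ multiplies a $q$-power, not the full $H$; instead one splits $|D\bfu|^q = |D\bfu|^q\mathbbm{1}_{\{|D\bfu|\le \lambda\}} + |D\bfu|^q\mathbbm{1}_{\{|D\bfu|>\lambda\}}$, or more cleanly uses the higher integrability to write $r^\alpha \dashint_{B_r}|D\bfu|^q \le r^\alpha (\dashint_{B_r}|D\bfu|^{p(1+\sigma_0)})^{\frac{q-p}{p(1+\sigma_0)}}\dashint_{B_r}|D\bfu|^p$ and then \eqref{eq:assless1} bounds the first factor by $r^{-n(q-p)/(p(1+\sigma_0))}$, so the product is $\lesssim r^{\alpha_0}\dashint_{B_r}|D\bfu|^p \le r^{\alpha_0}\dashint_{B_r}H(\cdot,|D\bfu|)$, no reabsorption needed.

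The main obstacle, as usual for double phase estimates, is bookkeeping the interplay between the frozen functions $H^-_{B_{2r}}$, $H^+_{B_{2r}}$ and $H(x,\cdot)$ and making sure every conversion between them costs only a power $r^\alpha$ that the condition \eqref{eq:pq} can convert into a \emph{strictly positive} power after the higher-integrability Hölder step; there is no genuinely hard analytic idea beyond the already-proven Lemmas~\ref{lem:high0} and \ref{cor:reverse0}. I would therefore state $\alpha_2 := \min\{\alpha,\alpha_0\}$ (or simply $\alpha_2:=\alpha_0$, since $\alpha_0\le\alpha$), note that it is positive and depends only on the declared constants, and present the two estimates as short consequences of the chain described above, using the shorthand $H^\pm_{2r}$ as in the proof of Lemma~\ref{cor:reverse0} and citing \eqref{eq:Hreverse0}, \eqref{eq:H'reverse}, \eqref{eq:assless1} and Lemma~\ref{lem:lemma2.2ok} at the appropriate points.
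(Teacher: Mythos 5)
Your overall strategy (exploit $0\le a(x)-a^-_{x_0,2r}\lesssim r^\alpha$, then use higher integrability, the reverse H\"older estimates and \eqref{eq:pq} to turn $r^\alpha$ into a strictly positive leftover power $r^{\alpha_2}$) is the right spirit, but the central reduction in your argument is invalid, and it is exactly at the point where the real difficulty of the lemma sits. First, the pointwise inequalities $|D\bfu|^q\le H^+_{B_{2r}}(|D\bfu|)$ and $|D\bfu|^{q-1}\le (H^+_{B_{2r}})'(|D\bfu|)/q$ are false in general: $t^q\le t^p+a^+_{2r}t^q$ forces $(1-a^+_{2r})t^{q-p}\le 1$, which fails whenever $a^+_{2r}<1$ and $t$ is large (e.g.\ $a\equiv 0$). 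Second, the ``cleaner'' step
$\dashint_{B_r}|D\bfu|^q\le \big(\dashint_{B_r}|D\bfu|^{p(1+\sigma_0)}\big)^{\frac{q-p}{p(1+\sigma_0)}}\dashint_{B_r}|D\bfu|^p$
is not an instance of H\"older's inequality and is false: testing with $|D\bfu|=M$ on a set of measure fraction $\epsilon$ and $0$ elsewhere gives $M^q\epsilon$ on the left and $M^q\epsilon^{1+\frac{q-p}{p(1+\sigma_0)}}$ on the right. A correct H\"older/interpolation argument for $\dashint|D\bfu|^q$ would require $q\le p(1+\sigma_0)$, which is not available because $\sigma_0$ is whatever Gehring's lemma produces and may be arbitrarily small compared with the fixed gap $q-p$. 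More fundamentally, $\dashint_{B_r}|D\bfu|^q$ need not be finite at all: on the $p$-phase region the only integrability you have is $|D\bfu|\in L^{p(1+\sigma_0)}$, so discarding the weight $a(x)-a^-_{2r}$ in favor of a crude $r^\alpha$ and a plain $q$-th power average loses the game before it starts.

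The paper's proof resolves precisely this issue by splitting $B_r$ into $E=\{H(x,|D\bfu|)\le r^{-n+\bar\sigma}\}$ and $F=B_r\setminus E$ with $\bar\sigma=\frac{n\sigma_0}{2(1+\sigma_0)}$. On $E$ one has the pointwise bound $|D\bfu|^{q-p}\le r^{(-n+\bar\sigma)\frac{q-p}{p}}$, which converts $r^\alpha|D\bfu|^{q-1}$ into $r^{\sigma_1}|D\bfu|^{p-1}$ with $\sigma_1=\alpha+(-n+\bar\sigma)\frac{q-p}{p}>0$, and then \eqref{eq:H'reverse} applies. On $F$ one estimates $H'(x,|D\bfu|)$ via Jensen's inequality for the conjugate function $(H^+_{2r})^*$, uses $1<r^{n-\bar\sigma}H(x,|D\bfu|)$ on $F$ together with the $L^{1+\sigma_0}$ bound and \eqref{eq:Hreverse0} to extract a factor $r^{\sigma_2}>0$. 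You do mention in passing the level-set splitting $|D\bfu|^q\mathbbm{1}_{\{|D\bfu|\le\lambda\}}+|D\bfu|^q\mathbbm{1}_{\{|D\bfu|>\lambda\}}$, which is indeed the necessary route, but you then discard it for the invalid H\"older step and never address the superlevel set, which is the genuinely delicate half of the argument. As it stands the proposal does not prove the lemma.
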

\begin{proof}
To enlighten the notation, set $B_\rho=B_\rho(x_0)$, $a^\pm_{\rho} = a^\pm_{x_0, \rho}$, $H^\pm_\rho:=H^\pm_{B_\rho(x_0)}$ and $H^{+,*}_\rho:=(H^+_{B_\rho(x_0)})^*$. Let $\bar{\sigma} := \frac{n\sigma_0}{2(1+\sigma_0)}>0$, where $\sigma_0>0$ is from Lemma~\ref{lem:high0}, and set positive constants
$$
\sigma_1:= \alpha+(-n+\bar{\sigma})\frac{q-p}{p}
\quad \mbox{ and } \quad 
\sigma_2:= \left\{n\sigma_0-\bar{\sigma}(1+\sigma_0)\right\}\frac{\sigma_0(p-1)}{p(1+\sigma_0)}\,.
$$

Now, we have to distinguish between two cases, depending on whether condition $H(x,|D\bfu|)\leq r^{-n+\bar{\sigma}}$ is satisfied or not.  Set
\begin{equation*}
E:=\{H(x,|D\bfu|)\leq r^{-n+\bar{\sigma}}\} \cap B_{r}
 \quad\text{and}\quad
  F:=B_{r} \backslash E\,,
\end{equation*}
and split the integral on the left hand side of \eqref{eq:comparison1} as
\begin{equation} \label{eq:JEF}
\begin{split}
\dashint_{B_r} (a(x)-a^-_{{2r}}) |D\bfu|^{q-1}\mathrm{d} x & = \dashint_{B_r}\mathbbm{1}_{E}(x)(a(x)-a^-_{{2r}})|D\bfu|^{q-1}\,\mathrm{d}x \\ 
&\qquad + \dashint_{B_{r}}\mathbbm{1}_{F}(x)(a(x)-a^-_{{2r}})|D\bfu|^{q-1}\,\mathrm{d}x \\
& =: J_{E} + J_{F} \,. 
\end{split}
\end{equation}
Note that on $E$ it holds that $|D\bfu|^p\leq r^{-n+\bar{\sigma}}$. We then have, with \eqref{eq:H'reverse},
\begin{equation}
\begin{split}
|J_{E}|  \lesssim \dashint_{B_{r}}\mathbbm{1}_{E}(x)r^{\alpha+(-n+\bar{\sigma})\frac{q-p}{p}} |D\bfu |^{p-1}\,\mathrm{d}x 
& \leq \frac{r^{\sigma_1}}{p} \dashint_{B_r} (H^-_{{2r}})'(|D\bfu |)\,\mathrm{d} x \\
&  \leq c r^{\sigma_1} (H^{-}_{{2r}})'\bigg(\dashint_{B_{2r}} |D\bfu |\,\mathrm{d} x \bigg)\,.
\end{split}
\label{eq:estimEtilde}
\end{equation} 
Now, we turn to the estimate of $J_{F}$. Using Jensen's inequality for $H^{+,*}_{{2r}}$, the fact that $H^{+,*}_{{2r}}(t) \leq H^*(x,t)$ for every $x\in B_{2r}$ and $t>0$, and recalling that $\varphi(t):=H(x,t)$, with fixed $x$, complies with \eqref{eq:2.6ok} and \eqref{ineq:phiast_phi_p}, we get 
\begin{equation*}
\begin{split}
|J_{F}| & \lesssim \dashint_{B_{r}}\mathbbm{1}_{F}(x)H'(x,|D\bfu |) \,\mathrm{d}x \\
& \lesssim (H^{+,*}_{{2r}})^{-1}\bigg(\dashint_{B_{r}}\mathbbm{1}_{F}(x)H^{+,*}_{{2r}}(H'(x,|D\bfu |)) \,\mathrm{d}x  \bigg) \\
& \lesssim (H^{+,*}_{{2r}})^{-1}\bigg(\dashint_{B_{r}}\mathbbm{1}_{F}(x)H(x,|D\bfu |) \,\mathrm{d}x \bigg)\,. 
\end{split}
\end{equation*} 
Now, on the set $F$ we have $r^{n-\bar{\sigma}}H(x,|D \bfu|)>1$. This, combined with \eqref{eq:Hreverse0} 
and the second inequality in \eqref{eq:assless1} gives
\begin{equation*}
\begin{split}
|J_{F}| & \lesssim (H^{+,*}_{{2r}})^{-1}\bigg(\dashint_{B_{r}}\mathbbm{1}_{F}(x)H(x,|D\bfu |) \,\mathrm{d}x \bigg) \\
 &  \lesssim (H^{+,*}_{{2r}})^{-1}\bigg(r^{\sigma_0(n-\bar{\sigma})}\dashint_{B_{r}}[H(x,|D\bfu |)]^{1+\sigma_0} \,\mathrm{d}x \bigg) \\
& = c (H^{+,*}_{{2r}})^{-1}\Bigg(r^{\sigma_0(n-\bar{\sigma})}\bigg(\dashint_{B_{r}}[H(x,|D\bfu |)]^{1+\sigma_0} \,\mathrm{d}x \bigg)^{\frac{1}{1+\sigma_0}+ \frac{\sigma_0}{1+\sigma_0}}\Bigg) \\
& \lesssim(H^{+,*}_{{2r}})^{-1}\Bigg(r^{\sigma_0(n-\bar{\sigma})-\frac{n\sigma_0}{1+\sigma_0}}\bigg(\dashint_{B_{r}}[H(x,|D\bfu |)]^{1+\sigma_0} \,\mathrm{d}x \bigg)^{\frac{1}{1+\sigma_0}}\Bigg) \\
& \leq r^{\sigma_2} (H^{+,*}_{{2r}})^{-1}\Bigg(H^-_{{2r}}\bigg(\dashint_{B_{2r}}|D\bfu |\,\mathrm{d}x\bigg)\Bigg) \\
& \leq r^{\sigma_2} \left((H^{+,*}_{{2r}})^{-1}\circ H^+_{{2r}}\right)\bigg(\dashint_{B_{2r}}|D\bfu |\,\mathrm{d}x\bigg)\,.
\end{split}
\end{equation*}
Moreover, since  $(H^{+,*}_{{2r}})^{-1}(H^+_{{2r}}(t))  \sim (H^+_{{2r}})'(t)$ by \eqref{eq:2.6ok}, using 
\eqref{eq:estimq-p} we have 
\begin{equation*}
(H^{+,*}_{{2r}})^{-1}\Bigg(H^+_{{2r}}\bigg( \dashint_{B_r} |D\bfu | \,\mathrm{d} x \bigg) \Bigg) \lesssim(H^{-}_{{2r}})' \bigg( \dashint_{B_r} |D\bfu | \,\mathrm{d} x \bigg)\,.    
\end{equation*}
We then have
\begin{equation}
\begin{split}
|J_{F}| & \lesssim r^{\sigma_2} (H^{-}_{{2r}})' \bigg( \dashint_{B_{2r}} |D\bfu | \,\mathrm{d} x \bigg) \,. 
\end{split}
\label{eq:estimFtilde}
\end{equation}
Therefore, combining the above estimates \eqref{eq:JEF}, \eqref{eq:estimEtilde} and \eqref{eq:estimFtilde}, and letting $\alpha_2\le \min\{\sigma_1,\sigma_2\}$, we obtain  \eqref{eq:comparison1}. The estimate \eqref{eq:comparison2} follows a similar, and even slightly simpler, argument. Hence, we omit its proof. 
\end{proof}


\section{Decay estimates for excess functionals} \label{sec:decayestimate}


Let $\bfu\in W^{1,1}(\Omega;\R^N)$ with $H(\cdot,|D\bfu|)\in L^1(\Omega)$ be a weak solution to \eqref{system1}, where $H:\Omega\times[0,\infty)\to[0,\infty)$ is defined in \eqref{eq:H} complying with \eqref{eq:pq} and $\bfA$  satisfies \eqref{eq:1.8ok1}--\eqref{eq:degenereassump}. 
We introduce the following \emph{Campanato-type} excess functionals, measuring the oscillations of $D\bfu$:
\begin{equation}
E(x_0,r, {\bf Q}):  = \dashint_{B_{r}(x_0)} |\bfV_{H^-_{B_{r}(x_0)}}(D\bfu)-\bfV_{H^-_{B_{r}(x_0)}}({\bf Q})|^2 \,\mathrm{d} x\,,
\label{eq:excessE}
\end{equation} 
and 
\begin{equation}
\Phi(x_0,r, {\bf Q}) :=\frac{E(x_0,r, {\bf Q})}{H^-_{B_r(x_0)}(|{\bf Q}|)}\,.
\label{eq:excessPhi}
\end{equation}
  If ${\bf Q}=(D\bfu)_{x_0,r}$, we will use the shorthand $E(x_0,r)\equiv E (x_0,r, (D\bfu)_{x_0,r})$ and $ \Phi(x_0,r)\equiv\Phi(x_0,r, (D\bfu)_{x_0,r})$. 
Note that, by \eqref{eq:linkVphi} and \eqref{eq:equivalencebis}, it holds that
\begin{equation}
E(x_0,r) \sim  \dashint_{B_{r}(x_0)} |\bfV_{H^-_{B_{r}(x_0)}}(D\bfu)-(\bfV_{H^-_{B_{r}(x_0)}}(D\bfu))_{x_0,r}|^2 \,\mathrm{d} x\,,
\label{eq:equivalenceexcess}
\end{equation}
where $H_{a} (x,t)$ denotes the shifted $N$ function of $H$ with shift $a$.

Furthermore, we fix any $B_{2r} (x_0)\subset \Omega'\Subset \Omega$ satisfying \eqref{eq:assless1} with $r\le 1/2$. 
We first consider the nondegenerate regime.

\subsection{Non degenerate regime: almost $\mathcal{A}$-harmonic functions} \label{sec:almostAharmonic}

We can start with the linearization procedure for system \eqref{system1}. Let us set
$$
\mathcal{A}({\bf Q}) := \frac{ D_\xi{\bf A}(x^-_{2r},{\bf Q})}{ H''(x^-_{2r}, |{\bf Q}|)} \,,
\quad \bfQ\in \R^{N\times n}
\,.
$$
Note that the bilinear form $\mathcal{A}(\bfQ)$ satisfies the Legendre-Hadamard condition \eqref{eq:LegHam} by virtue of \eqref{eq:1.8ok1}--\eqref{eq:1.8ok2}. 
We aim to prove that the function $\bfu-\bm\ell_{x_0,2r, {\bf Q}}$ is approximately $\mathcal{A}$-harmonic. This fact, together with the higher integrability result \eqref{eq:caccioppoliIbis} will allow us to apply the $\mathcal{A}$-harmonic approximation lemma: Lemma~\ref{thm:Aappr_psi}.
We also note that in this regime we do not use the assumption \eqref{eq:degenereassump}.

\begin{lemma}
Let $\alpha_2$ be the exponent of Lemma \ref{lem:comparison}, and $\beta_0$ be from \eqref{eq:1.12ok}.  
Then there exists $c=c(n,N,p,q,\nu,L,\alpha,[a]_{C^{0,\alpha}})>0$ such that
\begin{equation}
\begin{split}
& \bigg|\dashint_{B_{r}(x_0)} \langle \mathcal{A}({\bf Q}) (D{\bf u}- {\bf Q})\,|\,D\bm \psi \rangle \,\mathrm{d}x\bigg| \\
& \leq c |{\bf Q}|\left\{ \Phi(x_0,2r, {\bf Q}) + [\Phi(x_0,2r, {\bf Q}) ]^\frac{1+\beta_0}{2} +( r^{\alpha_2}+r^{\beta_0})[1+\Phi(x_0,2r, {\bf Q}) ]^\frac{q-1}{p}\right\}\|D\bm \psi\|_{\infty} 
\end{split}
\label{eq:4.12ok}
\end{equation}
for every $\bm \psi\in C^\infty_0(B_{r}(x_0);\R^N)$. 
\label{lem:lemma4.2ok}
\end{lemma}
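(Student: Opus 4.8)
\textbf{Proof proposal for Lemma~\ref{lem:lemma4.2ok}.}

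The plan is to test the weak formulation \eqref{eq:system1} with $\bm\psi\in C^\infty_0(B_r(x_0);\R^N)$ and compare with the linearized operator $\mathcal A({\bf Q})$. Using that $\dashint_{B_r}\langle {\bf A}(x^-_{2r},{\bf Q})\,|\,D\bm\psi\rangle\,\mathrm{d}x=0$ and $\dashint_{B_r}\langle D_\xi{\bf A}(x^-_{2r},{\bf Q})(D\bfu-{\bf Q})\,|\,D\bm\psi\rangle\,\mathrm{d}x$ is what we want to compare against, I would write
\[
\dashint_{B_r}\langle \mathcal A({\bf Q})(D\bfu-{\bf Q})\,|\,D\bm\psi\rangle\,\mathrm{d}x = \frac{1}{H''(x^-_{2r},|{\bf Q}|)}\dashint_{B_r}\big\langle D_\xi{\bf A}(x^-_{2r},{\bf Q})(D\bfu-{\bf Q}) - \big({\bf A}(x,D\bfu)-{\bf A}(x^-_{2r},{\bf Q})\big)\,\big|\,D\bm\psi\big\rangle\,\mathrm{d}x,
\]
and then split the integrand into several pieces: (i) the difference ${\bf A}(x,D\bfu)-{\bf A}(x^-_{2r},D\bfu)$ coming from the $x$-dependence, handled by \eqref{eq:1.11ok}; (ii) the difference ${\bf A}(x^-_{2r},D\bfu)-{\bf A}(x^-_{2r},{\bf Q})-D_\xi{\bf A}(x^-_{2r},{\bf Q})(D\bfu-{\bf Q})$, the genuine linearization error, which I would split further according to whether $|D\bfu-{\bf Q}|\le\tfrac12|{\bf Q}|$ (use \eqref{eq:1.12ok} with $\bm\xi_1={\bf Q}$ after writing the error as $\int_0^1[D_\xi{\bf A}(x^-_{2r},{\bf Q}+s(D\bfu-{\bf Q}))-D_\xi{\bf A}(x^-_{2r},{\bf Q})](D\bfu-{\bf Q})\,\mathrm{d}s$) or $|D\bfu-{\bf Q}|>\tfrac12|{\bf Q}|$ (use the crude growth bound \eqref{eq:1.8ok1} on both terms); and (iii) the comparison between the modulating coefficient $a(x)$ and its infimum $a^-_{2r}$, i.e.\ the $q$-phase contribution, absorbed into the term $L|a(x)-a^-_{2r}||D\bfu|^{q-1}$ from \eqref{eq:1.11ok} plus the analogous term arising from replacing $H''(x,\cdot)$ by $H''(x^-_{2r},\cdot)$.

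The estimates then proceed as follows. For piece (i) and the coefficient part of (iii), after pulling out $\|D\bm\psi\|_\infty$ I would invoke Lemma~\ref{lem:comparison} (estimate \eqref{eq:comparison1}) to control $\dashint_{B_r}(a(x)-a^-_{2r})|D\bfu|^{q-1}\,\mathrm{d}x$ by $c r^{\alpha_2}(H^-_{2r})'((|D\bfu|)_{2r})$, and the $x$-Hölder part by $c r^{\beta_0}\dashint_{B_r}H'(x,|D\bfu|)\,\mathrm{d}x \lesssim r^{\beta_0}(H^-_{2r})'((|D\bfu|)_{2r})$ via \eqref{eq:H'reverse} (together with $r^\alpha|{\bf Q}|^{q-p}\lesssim r^{\alpha_0}$). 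Dividing by $H''(x^-_{2r},|{\bf Q}|)\sim H^-_{2r}(|{\bf Q}|)/|{\bf Q}|^2$ and using $(H^-_{2r})'((|D\bfu|)_{2r})|{\bf Q}|^2/H^-_{2r}(|{\bf Q}|)\lesssim |{\bf Q}|[1+\Phi]^{(q-1)/p}$ — which comes from the reverse-Hölder / Sobolev–Poincaré bound $(|D\bfu|)_{2r}\lesssim |{\bf Q}| + (\text{excess term})$ and the convexity estimates \eqref{eq:2.2ok} — produces the $(r^{\alpha_2}+r^{\beta_0})|{\bf Q}|[1+\Phi]^{(q-1)/p}$ contribution. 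For piece (ii) in the regime $|D\bfu-{\bf Q}|\le\tfrac12|{\bf Q}|$, the bound \eqref{eq:1.12ok} gives a factor $(|D\bfu-{\bf Q}|/|{\bf Q}|)^{\beta_0}H''(x^-_{2r},|{\bf Q}|)|D\bfu-{\bf Q}|$; dividing by $H''(x^-_{2r},|{\bf Q}|)$, integrating, and applying Hölder's inequality with exponents $\tfrac{2}{1+\beta_0}$ and $\tfrac{2}{1-\beta_0}$ together with $\dashint_{B_{2r}}|D\bfu-{\bf Q}|^2\lesssim |{\bf Q}|^2\,\Phi(x_0,2r,{\bf Q})$ (from $|{\bf V}_{H^-_{2r}}(D\bfu)-{\bf V}_{H^-_{2r}}({\bf Q})|^2\sim (H^-_{2r})_{|{\bf Q}|}(|D\bfu-{\bf Q}|)$ and $(H^-_{2r})_{|{\bf Q}|}(t)\gtrsim (H^-_{2r})''(|{\bf Q}|)t^2$ for small $t$) yields the $|{\bf Q}|[\Phi]^{(1+\beta_0)/2}$ term; in the complementary regime $|D\bfu-{\bf Q}|>\tfrac12|{\bf Q}|$ the crude bound \eqref{eq:1.8ok1} gives $\lesssim \dashint H'(x^-_{2r},|{\bf Q}|+|D\bfu-{\bf Q}|)/H''(x^-_{2r},|{\bf Q}|)$ restricted to this set, which by $\Delta_2$ and $(H^-_{2r})_{|{\bf Q}|}$-equivalences is controlled by $|{\bf Q}|\,\Phi(x_0,2r,{\bf Q})$ (plus, when $\Phi\ge1$, a power $[\Phi]^{(q-1)/p}$ absorbed into the last term). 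Finally one passes from balls $B_{2r}$ to the dilated estimates using $\dashint_{B_r}\le 2^n\dashint_{B_{2r}}$ and collects all exponents of $r$ under the single $r^{\alpha_2}+r^{\beta_0}$.

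The main obstacle I anticipate is the bookkeeping in piece (ii), specifically the interplay between the shifted $N$-function $(H^-_{2r})_{|{\bf Q}|}$ appearing naturally in the excess $\Phi$ and the ``unshifted'' quantities $H'$, $H''$ produced by the structure conditions \eqref{eq:1.8ok1}–\eqref{eq:1.12ok}: one must repeatedly use \eqref{(2.6a)}–\eqref{(2.6c)}, \eqref{(2.1celokbis)} and the double-phase structure $H(x,t)=t^p+a(x)t^q$ to convert between $H''(x^-_{2r},|{\bf Q}|)t^2$, $(H^-_{2r})_{|{\bf Q}|}(t)$, and $|{\bf Q}|(H^-_{2r})'(|{\bf Q}|)$, and to ensure the exponent $\tfrac{q-1}{p}$ (rather than something worse) governs the superlinear growth in $\Phi$ when $\Phi\ge1$. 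A secondary subtlety is that all Hölder-type and reverse-Hölder inputs (Lemma~\ref{cor:reverse}, Lemma~\ref{lem:comparison}, estimate \eqref{eq:H'reverse}) are stated on $B_{2r}$ under \eqref{eq:assless1}, so care is needed that $r\le1/2$, $(2r)^\alpha|{\bf Q}|^{q-p}\le1$ and the smallness \eqref{eq:assless1} are all available — but these hold by the standing assumptions on $B_{2r}(x_0)\subset\Omega'\Subset\Omega$.
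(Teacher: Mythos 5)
Your proposal is correct and follows essentially the same route as the paper: the same exact decomposition into the linearization error at the frozen point $x^-_{2r}$ (split according to $|D\bfu-{\bf Q}|\le\tfrac12|{\bf Q}|$ via \eqref{eq:1.12ok} and \eqref{eq:1.8ok1}) plus the frozen-coefficient error handled by \eqref{eq:1.11ok}, Lemma~\ref{lem:comparison} and \eqref{eq:H'reverse}. Your use of H\"older with exponents $\tfrac{2}{1+\beta_0}$, $\tfrac{2}{1-\beta_0}$ in the small regime is equivalent to the paper's Jensen-inequality step, and the extra ``$H''(x,\cdot)$ vs.\ $H''(x^-_{2r},\cdot)$'' term you mention in piece (iii) does not actually arise in the exact identity you wrote down, so nothing is missing.
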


\begin{proof}
It will suffice to prove \eqref{eq:4.12ok} for $\bm \psi\in C_0^\infty(B_{r}(x_0);\R^N)$ with $\|D\bm \psi\|_{\infty}\leq1$, since the general case will follow by a standard normalization argument. 
To enlighten notation, we omit the explicit dependence on $x_0$, and write $H^{\pm}_{2r}(t) = H^{\pm}_{B_{2r}(x_0)}(t)$.
From the definitions of $\mathcal{A}$ and ${\bf w}$ we have
\begin{equation}
\begin{split}
& H''(x^-_{2r},|{\bf Q}|) \dashint_{B_{r}} \langle  \mathcal{A}({\bf Q})(D{\bf w}-{\bf Q})\,|\,D\bm \psi \rangle  \,\mathrm{d}x  \\
&\,\, = \dashint_{B_{r}} \langle D_\xi{\bf A}(x^-_{2r},{\bf Q}) (D\bfu-{\bf Q}) \,|\, D\bm \psi \rangle \,\mathrm{d}x \\
& \,\, = \dashint_{B_{r}} \int_0^1 \langle  [ D_\xi{\bf A}(x^-_{2r}, {\bf Q}) - D_\xi{\bf A}(x^-_{2r}, {\bf Q} + t (D\bfu-{\bf Q})) ](D\bfu-{\bf Q}) \,|\, D\bm \psi \rangle \,\mathrm{d}t\,\mathrm{d}x \\
& \,\, \,\,\,\, + \dashint_{B_{r}} \int_0^1 \langle  [D_\xi{\bf A}(x^-_{2r}, {\bf Q} + t (D\bfu-{\bf Q})) ] (D\bfu-{\bf Q}) \,|\, D\bm \psi \rangle  \,\mathrm{d}t\,\mathrm{d}x \\
& \,\, =: J_1 + J_2\,.
\end{split}
\label{eq:4.13ok}
\end{equation}
In order to estimate $J_1$, we first observe that
\begin{equation*}
\begin{split}
J_1 & = \dashint_{B_{r}} \mathbbm{1}_{\widetilde E}(x) \int_0^1 \langle [D_\xi{\bf A}(x^-_{2r}, {\bf Q}) - D_\xi{\bf A}(x^-_{2r}, {\bf Q} + t (D\bfu-{\bf Q}))](D\bfu-{\bf Q}) \,|\, D\bm \psi \rangle\,\mathrm{d}t\,\mathrm{d}x \\
& \,\,\,\,\,\, + \dashint_{B_{r}} \mathbbm{1}_{\widetilde F}(x) \int_0^1 \langle [D_\xi{\bf A}(x^-_{2r}, {\bf Q}) - D_\xi{\bf A}(x^-_{2r}, {\bf Q} + t (D\bfu-{\bf Q}))](D\bfu-{\bf Q}) \,|\, D\bm \psi \rangle\,\mathrm{d}t\,\mathrm{d}x \\
& =: J_{1,\widetilde E} + J_{1,\widetilde F}\,,
\end{split}
\end{equation*}
where $\widetilde E :=\left\{x\in B_{r}\,:\, |D\bfu(x)-{\bf Q}|\geq \frac{1}{2}|{\bf Q}|\right\}$, and $\widetilde F :=B_{r}\backslash \widetilde E$.

We start with the estimate of $J_{1,\widetilde E}$. From \eqref{eq:1.8ok1} and \eqref{ineq:phiast_phi_p},
\begin{equation*}
\begin{split}
{|J_{1,\widetilde E}|}& \leq c \dashint_{B_{r}}  \mathbbm{1}_{\widetilde E}(x) \bigg(\int_0^1 \left[ (H^{-}_{{2r}})''(|{\bf Q}|) + (H^{-}_{B_{2r}})''(|{\bf Q} + t (D\bfu-{\bf Q})|)\right]\,\mathrm{d}t \bigg) |D\bfu-{\bf Q}|\,\mathrm{d}x \\
& \lesssim \dashint_{B_{r}}  \mathbbm{1}_{\widetilde E}(x) \left [ (H^{-}_{{2r}})''(|{\bf Q}|) + (H^{-}_{{2r}})''(|{\bf Q}| + |D\bfu|)\right ] |D\bfu-{\bf Q}|\,\mathrm{d}x \\
& \lesssim \dashint_{B_{r}}  \mathbbm{1}_{\widetilde E}(x)  (H^{-}_{{2r}})'(|{\bf Q}| + |D\bfu|)  \frac{|D\bfu-{\bf Q}|}{|{\bf Q}|}\,\mathrm{d}x \,.
\end{split}
\end{equation*}
For a.e. $x\in \widetilde E$, it holds
\begin{equation*}
|{\bf Q}|+|D\bfu| \leq |D\bfu-{\bf Q}| + 2 |{\bf Q}| \leq 5 |D\bfu-{\bf Q}| \,,
\end{equation*}
whence 
\begin{equation*}
(H^{-}_{{2r}})'(|{\bf Q}| + |D\bfu|) \lesssim \frac{(H^{-}_{{2r}})'(|{\bf Q}| + |D\bfu-{\bf Q}|)}{|{\bf Q}| + |D\bfu-{\bf Q}|}  |D\bfu-{\bf Q}| \,.
\end{equation*}
Now, using \eqref{eq:phi_shifted}, \eqref{(2.6a)} and \eqref{ineq:phiast_phi_p} for $\varphi(t):=H^{-}_{{2r}}(t)$, we finally get
\begin{equation*}
|J_{1,\widetilde E}|  \lesssim \frac{1}{|{\bf Q}|}\dashint_{B_{r}} H_{|{\bf Q}|}(x^-_{2r},  |D\bfu-{\bf Q}|)\,\mathrm{d}x   \lesssim |{\bf Q}|(H^{-}_{{2r}})''(|{\bf Q}|) \Phi(2r, {\bf Q})\,.
\end{equation*}
For what concerns $J_{1,\widetilde F}$, {by assumption \eqref{eq:1.12ok} we note that, for every $t\in[0,1]$, 
\begin{equation*}
 \left|D_\xi{\bf A}(x^-_{2r}, {\bf Q}) - D_\xi{\bf A}(x^-_{2r}, {\bf Q} + t (D\bfu-{\bf Q}))\right| \lesssim (H^{-}_{{2r}})''(|{\bf Q}|) \left(\frac{|D\bfu-{\bf Q}|}{|{\bf Q}|}\right)^{\beta_0}\,,
\end{equation*}
so that 
\begin{equation*}
|J_{1,\widetilde F}| \lesssim |{\bf Q}| (H^{-}_{{2r}})''(|{\bf Q}|)\dashint_{B_{r}}  \mathbbm{1}_{\widetilde F}(x) \left(\frac{|D\bfu-{\bf Q}|}{|{\bf Q}|}\right)^{1+\beta_0}\,\mathrm{d}x\,.
\end{equation*} }
For a.e. $x\in \widetilde F$, we have
\begin{equation*}
|{\bf Q}|+|D\bfu-{\bf Q}| < \frac{3}{2}|{\bf Q}|\,,
\end{equation*}
whence, using again \eqref{eq:phi_shifted}, \eqref{(2.6a)} for $\phi=H^{-}_{{2r}}$, we obtain
\begin{equation*}
\begin{split}
\frac{|D\bfu-{\bf Q}|^2}{|{\bf Q}|^2} = \frac{(H^{-}_{{2r}})'(|{\bf Q}|)}{(H^{-}_{{2r}})'(|{\bf Q}|)} \cdot \frac{|D\bfu-{\bf Q}|^2}{|{\bf Q}|^2}& \lesssim \frac{(H^{-}_{{2r}})'(|{\bf Q}| + |D\bfu-{\bf Q}|)|D\bfu-{\bf Q}|^2}{H^{-}_{{2r}}(|{\bf Q}|) (|{\bf Q}| + |D\bfu-{\bf Q}|)}  \\
& \sim \frac{1}{H^{-}_{{2r}}(|{\bf Q}|)} H_{|{\bf Q}|}(x^-_{2r}, |D\bfu-{\bf Q}|) \,.
\end{split}
\end{equation*}
Combining the previous estimates and using Jensen's inequality with $\frac{1+\beta_0}{2}<1$, we get
\begin{equation*}
\begin{split}
|J_{1,\widetilde F}| & \lesssim |{\bf Q}| (H^{-}_{{2r}})''(|{\bf Q}|)\dashint_{B_{r}} \left(\frac{1}{H^{-}_{{2r}}(|{\bf Q}|)} H_{|{\bf Q}|}(x^-_{2r}, |D\bfu-{\bf Q}|) \right)^{\frac{1+\beta_0}{2}}\,\mathrm{d}x \\
& \lesssim |{\bf Q}| (H^{-}_{{2r}})''(|{\bf Q}|) (\Phi(2r, {\bf Q}))^{\frac{1+\beta_0}{2}} \,.
\end{split}
\end{equation*} 
Collecting the estimates for $J_{1,\widetilde E}$ and $J_{1,\widetilde F}$, we then infer
\begin{equation}
|J_1| \lesssim |{\bf Q}| (H^{-}_{{2r}})''(|{\bf Q}|) \left[(\Phi(2r, {\bf Q}))^{\frac{1+\beta_0}{2}} + \Phi(2r, {\bf Q}) \right]\,. 
\label{eq:estimJ1}
\end{equation}

In order to estimate $J_2$, we use Lagrange's Mean Value Theorem, \eqref{eq:4.4ok} combined with the definition of weak solution, \eqref{eq:1.11ok} and we preliminary obtain
\begin{equation}
\begin{split}
|J_2| &  = \bigg|\dashint_{B_{r}}  \langle{\bf A}(x^-_{2r}, D\bfu)-{\bf A}(x, D\bfu) \,|\, D\bm \psi \rangle\,\mathrm{d}x \bigg| \\
& \lesssim r^{\beta_0} \dashint_{B_{r}} H'(x, |D\bfu|) \,\mathrm{d}x +\dashint_{B_{r}}|a(x^-_{2r})- a(x)||D\bfu|^{q-1}\,\mathrm{d}x=: J_3+J_4\,.
\end{split}
\label{eq:J3J4}
\end{equation}
To estimate $J_3$ and $J_4$, we observe that, since on $\widetilde  E$ it holds that $|D\bfu - {\bf Q}| \geq \frac{1}{2} |D\bfu - {\bf Q}| + \frac{1}{4} |{\bf Q}|$, 
\begin{equation}
\begin{split}
|D\bfu|^p & \lesssim \mathbbm{1}_{\widetilde E}(x) |D\bfu - {\bf Q}|^p + |{\bf Q}|^p \\
& \leq  \mathbbm{1}_{\widetilde E}(x) 4\frac{(|D\bfu - {\bf Q}|+|{\bf Q}|)^{p-1}}{|D\bfu - {\bf Q}|+|{\bf Q}|} |D\bfu - {\bf Q}|^2\cdot \frac{|{\bf Q}|^p}{|{\bf Q}|^p}+ |{\bf Q}|^p \\
& \leq  \mathbbm{1}_{\widetilde E}(x) 4\frac{(H^{-}_{{2r}})'(|D\bfu - {\bf Q}|+|{\bf Q}|)}{|D\bfu - {\bf Q}|+|{\bf Q}|} |D\bfu - {\bf Q}|^2\cdot \frac{|{\bf Q}|^{p-1}}{(H^{-}_{{2r}})'(|{\bf Q}|)}+ |{\bf Q}|^p \\
& \lesssim   \mathbbm{1}_{\widetilde E}(x) 4H_{|{\bf Q}|}(x^-_{2r}, |D\bfu - {\bf Q}|)\cdot \frac{|{\bf Q}|^{p-1}}{(H^{-}_{{2r}})'(|{\bf Q}|)}+ |{\bf Q}|^p \\
& \lesssim |{\bf Q}|^p \left( \mathbbm{1}_{\widetilde E}(x) 4H_{|{\bf Q}|}(x^-_{2r}, |D\bfu - {\bf Q}|)\cdot \frac{1}{H^{-}_{{2r}}(|{\bf Q}|)}+ 1 \right)\,,
\end{split}
\label{eq:gradestim}
\end{equation}
where we used \eqref{eq:phi_shifted} and \eqref{ineq:phiast_phi_p} for $\phi=H^{-}_{{2r}}(t)$. 
For $J_3$, using \eqref{eq:H'reverse} with H\"older's inequality, \eqref{eq:gradestim} and \eqref{eq:linkVphi} and \eqref{ineq:phiast_phi_p} for $\phi=H^{-}_{{2r}}$, we have
\begin{equation*}\begin{split}
 J_3
 &\lesssim r^{\beta_0} (H^-_{{2r}})' \bigg(\bigg[\dashint_{B_{2r}}  |D\bfu|^p \,\mathrm{d}x\bigg]^{1/p}\bigg)\\
 & \lesssim r^{\beta_0} (H^-_{{2r}})' \bigg(|\bfQ| \bigg[\frac{1}{H^-_{{2r}}(|\bfQ|)}\dashint_{B_{2r}}H^-_{{2r}} ( |D\bfu- \bfQ|) \,\mathrm{d}x+1\bigg]^{1/p}\bigg)\\
& \lesssim  r^{\beta_0} \big[\Phi(2r,\bfQ)^{\frac{q-1}{p}}+1\big] (H^-_{{2r}})' (|\bfQ| ) \sim r^{\beta_0} \big[\Phi(2r,\bfQ)^{\frac{q-1}{p}}+1\big] (H^-_{{2r}})'' (|\bfQ| ) |\bfQ |\,.
\end{split}\end{equation*}
For $J_4$, by \eqref{eq:comparison1} and the previous estimation, we have
$$\begin{aligned}
J_4  \lesssim  r^{\alpha_2} (H^{-}_{{2r}})' \bigg( \dashint_{B_{2r}} |D\bfu | \,\mathrm{d} x \bigg) \lesssim r^{\alpha_2}  \big[\Phi(2r,\bfQ)^{\frac{q-1}{p}}+1\big] (H^-_{{2r}})'' (|\bfQ| ) |\bfQ |\,,
\end{aligned}$$
so that, taking into account \eqref{eq:J3J4}, we finally get  
\begin{equation}
|J_2| 
\lesssim (r^{\beta_0}+r^{\alpha_2}) \big[\Phi(2r,\bfQ)^{\frac{q-1}{p}}+1\big] (H^-_{{2r}})'' (|\bfQ| ) |{\bf Q}|\,.
\label{eq:estimJ2}
\end{equation}
Therefore, inserting \eqref{eq:estimJ1} and \eqref{eq:estimJ2} into \eqref{eq:4.13ok}, the proof of \eqref{eq:4.12ok} is completed. 
\end{proof}

We now set 
\begin{equation}\label{alpha3}
\alpha_3:=\min\left\{\alpha_0, \alpha_1,\alpha_2,\beta_0 
\right\}, 
\end{equation}
where $\alpha_0$, $\alpha_1$, $\alpha_2$ and  $\beta_0$ are from \eqref{alpha0}, \eqref{alpha1}, Lemma~\ref{lem:comparison} and \eqref{eq:1.12ok}, respectively, and
\begin{equation}
E_*(x_0, \rho)  :  =  E(x_0, \rho)+ \rho^{\frac{\alpha_3}{2}} H^-_{B_\rho(x_0)}(|(D\bfu)_{x_0, B_{\rho}(x_0)}|)  = H^-_{B_\rho(x_0)}(|(D\bfu)_{x_0,B_{\rho}(x_0)}|)\left(\Phi(x_0, \rho)+ \rho^{\frac{\alpha_3}{2}}\right)\,,
\label{eq:excess2}
\end{equation}
where the excess $E(x_0, \rho)$ was introduced in \eqref{eq:excessE}. For the ease of reading, we also recall the definition of $\bfV_{H^-_{B_\rho(x_0)}}$ given in \eqref{eq:defV}; namely,
\begin{equation*}
\bfV_{H^-_{B_\rho(x_0)}}(\bfP) =  \sqrt{\frac{(H^-_{B_\rho(x_0)})'(|\bfP|)}{|\bfP|}} \bfP\,, \quad \bfP\in\R^{N\times n}\,.
\end{equation*}
Now, we can prove the excess decay estimate in the non-degenerate regime. 

\begin{lemma}
For every $\epsilon>0$, there exist small $\delta_1,\delta_2\in (0,1)$ depending on $n$, $N$, $p$, $q$, $\nu$, $L$, $\alpha$, $[a]_{C^{0,\alpha}}$, $\beta_0$ and $\epsilon$  such that if
\begin{equation}\label{smallness1}
 \dashint_{B_{2r}(x_0)} \left|\bfV_{H^-_{B_{2r}(x_0)}}(D\bfu)-\left(\bfV_{H^-_{B_{2r}(x_0)}}(D\bfu)\right)_{x_0,2r} \right|^2 \,\mathrm{d} x\,\le  \delta_1 \dashint_{B_{2r}(x_0)} \left|\bfV_{H^-_{B_{2r}(x_0)}}(D\bfu)\right|^2 \,\mathrm{d} x 
\end{equation}
and
\begin{equation}\label{smallness2}
r^{\frac{\alpha_3}{2}} \le \delta_2\,,
\end{equation}
then for every $\tau\in(0,1/4)$
$$
 \dashint_{B_{\tau r}(x_0)} \left|\bfV_{H^-_{B_{\tau r}(x_0)}}(D\bfu)-\left(\bfV_{H^-_{B_{\tau r}(x_0)}}(D\bfu)\right)_{x_0,\tau r}\right|^2 \,\mathrm{d} x 
 \le  c \tau^{2}\left(1+\frac{\epsilon}{\tau^{n+2}} \right)E_*(x_0,2r)\,.
$$
\label{lem:nondegeneratedecay}
\end{lemma}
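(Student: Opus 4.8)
The target is equivalent, by \eqref{eq:equivalenceexcess}--\eqref{eq:equivalencebis}, to the excess decay $E(x_0,\tau r)\le c\tau^{2}(1+\epsilon\tau^{-n-2})E_*(x_0,2r)$, and the plan is to obtain it by the usual linearization / $\mathcal A$-harmonic approximation / Campanato-iteration scheme, carried out with the shifted frozen $N$-function $\phi:=(H^-_{B_{2r}(x_0)})_{|{\bf Q}|}$ in the role of the ambient $N$-function, where ${\bf Q}:=(D\bfu)_{x_0,2r}$. I may assume ${\bf Q}\neq{\bf 0}$: otherwise, using $|\bfV_{H^-_{B_{2r}(x_0)}}({\bf P})|^{2}\sim H^-_{B_{2r}(x_0)}(|{\bf P}|)$ together with \eqref{eq:equivalencebis}, the right-hand side of \eqref{smallness1} is comparable to its left-hand side, forcing $E(x_0,2r)=0$ and $D\bfu\equiv{\bf 0}$ on $B_{2r}(x_0)$, which makes the conclusion trivial. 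Splitting $\dashint_{B_{2r}}|\bfV_{H^-_{B_{2r}(x_0)}}(D\bfu)|^{2}\sim E(x_0,2r)+H^-_{B_{2r}(x_0)}(|{\bf Q}|)$ and feeding this into \eqref{smallness1}, I first deduce $\Phi(x_0,2r,{\bf Q})\le c\,\delta_1$ once $\delta_1$ is small; in particular we are genuinely in the nondegenerate regime, and $\phi,\phi^{*}\in\Delta_{2}$ uniformly.

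Next I set ${\bf v}:=\bfu-\bm\ell_{x_0,2r,{\bf Q}}$ and $\mathcal A:=\mathcal A({\bf Q})$, which is Legendre--Hadamard elliptic by \eqref{eq:1.8ok1}--\eqref{eq:1.8ok2}. By Lemma~\ref{lem:lemma4.2ok}, ${\bf v}$ is almost $\mathcal A$-harmonic with the bound \eqref{eq:4.12ok}; since $\Phi(x_0,2r,{\bf Q})\le c\delta_1\le1$ and $r^{\alpha_3/2}\le\delta_2$ (and $\alpha_2,\beta_0\ge\alpha_3$), the bracket in \eqref{eq:4.12ok} is $\lesssim(\delta_1^{\beta_0/2}+\delta_2)\bigl(\Phi(x_0,2r,{\bf Q})+r^{\alpha_3/2}\bigr)^{1/2}$. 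By the reverse Hölder estimate of Lemma~\ref{cor:reverse} (and passing from $H_{|{\bf Q}|}(x,\cdot)$ to $\phi$ via Lemma~\ref{lem:comparison}, whose $r^{\alpha}$-error is $\lesssim r^{\alpha_1}H^-_{B_{2r}(x_0)}(|{\bf Q}|)\le E_*(x_0,2r)$), one gets $\bigl(\dashint_{B_r}\phi(|D{\bf v}|)^{1+\sigma}\bigr)^{1/(1+\sigma)}\lesssim E_*(x_0,2r)$, so I may fix $\mu>0$ with $\phi(\mu)=c_{*}E_*(x_0,2r)$; then, working in the range $\mu\le|{\bf Q}|$ where $\phi(\mu)\sim H^-_{B_{2r}(x_0)}(|{\bf Q}|)|{\bf Q}|^{-2}\mu^{2}$, one finds $\mu\sim|{\bf Q}|(\Phi(x_0,2r,{\bf Q})+r^{\alpha_3/2})^{1/2}\le|{\bf Q}|$ (shrinking $\delta_1,\delta_2$), self-consistently. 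Hence the almost-$\mathcal A$-harmonicity constant $c|{\bf Q}|\cdot(\text{bracket})\le\delta\mu$ once $\delta_1,\delta_2$ are small relative to the threshold $\delta$ of Lemma~\ref{thm:Aappr_psi}. Applying Lemma~\ref{thm:Aappr_psi} with $s=1+\sigma$ and $\epsilon$ replaced by a parameter $\epsilon_0$ (to be set $=\epsilon$ at the end), and adding $\bm\ell_{x_0,2r,{\bf Q}}$ back to the resulting $\mathcal A$-harmonic replacement, I obtain an $\mathcal A$-harmonic ${\bf w}$ with ${\bf w}=\bfu$ on $\partial B_r$, $\dashint_{B_r}\phi(|D\bfu-D{\bf w}|)\le\epsilon_0\phi(\mu)$, and — by Jensen and \eqref{eq:westimate1} applied to the ($\mathcal A$-harmonic) affine translate ${\bf w}-\bm\ell_{x_0,r,{\bf Q}}$ — $\sup_{B_{r/2}}|D{\bf w}-{\bf Q}|\le c\dashint_{B_r}|D{\bf w}-{\bf Q}|\lesssim\mu\lesssim|{\bf Q}|$ and $\sup_{B_{r/2}}|D^{2}{\bf w}|\lesssim\mu/r$.

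Then I transfer the decay. Since $|D{\bf w}-{\bf Q}|\lesssim\mu\le|{\bf Q}|$ on $B_{r/2}$, there $\phi$ is two-sided comparable to the model quadratic $t\mapsto H^-_{B_{2r}(x_0)}(|{\bf Q}|)\,|{\bf Q}|^{-2}t^{2}$, so from $\sup_{B_{r/2}}|D^{2}{\bf w}|\lesssim\mu/r$ I get $\dashint_{B_{\tau r}}|D{\bf w}-(D{\bf w})_{x_0,\tau r}|^{2}\lesssim\tau^{2}\mu^{2}\sim\tau^{2}|{\bf Q}|^{2}H^-_{B_{2r}(x_0)}(|{\bf Q}|)^{-1}E_*(x_0,2r)$ and hence $\dashint_{B_{\tau r}}|\bfV_{H^-_{B_{\tau r}(x_0)}}(D{\bf w})-\bfV_{H^-_{B_{\tau r}(x_0)}}((D{\bf w})_{x_0,\tau r})|^{2}\lesssim\tau^{2}E_*(x_0,2r)$. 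For $E(x_0,\tau r)$ itself I use \eqref{eq:equivalencebis} to write $E(x_0,\tau r)\sim\inf_{\bf c}\dashint_{B_{\tau r}}|\bfV_{H^-_{B_{\tau r}(x_0)}}(D\bfu)-{\bf c}|^{2}\le\dashint_{B_{\tau r}}|\bfV_{H^-_{B_{\tau r}(x_0)}}(D\bfu)-\bfV_{H^-_{B_{\tau r}(x_0)}}((D{\bf w})_{x_0,\tau r})|^{2}$ and bound the integrand by $c|\bfV_{H^-_{B_{\tau r}(x_0)}}(D\bfu)-\bfV_{H^-_{B_{\tau r}(x_0)}}(D{\bf w})|^{2}+c|\bfV_{H^-_{B_{\tau r}(x_0)}}(D{\bf w})-\bfV_{H^-_{B_{\tau r}(x_0)}}((D{\bf w})_{x_0,\tau r})|^{2}$, the second term being already handled. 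For the first term I split $B_{\tau r}$ into $\{|D\bfu-D{\bf w}|\le|{\bf Q}|\}$ (in the quadratic window, where the integrand is $\sim\phi(|D\bfu-D{\bf w}|)$) and its complement (where $|D\bfu|\sim|D\bfu-D{\bf w}|$ and the integrand is $\lesssim H^-_{B_{2r}(x_0)}(|D\bfu-D{\bf w}|)\sim\phi(|D\bfu-D{\bf w}|)$); in either case it is controlled by $\phi(|D\bfu-D{\bf w}|)$, whose $B_{\tau r}$-average is $\le\tau^{-n}\dashint_{B_r}\phi(|D\bfu-D{\bf w}|)\le\epsilon_0\tau^{-n}\phi(\mu)\sim\epsilon_0\tau^{-n}E_*(x_0,2r)$. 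Wherever the frozen function $H^-_{B_{\tau r}(x_0)}$ has replaced $H^-_{B_{2r}(x_0)}$ I use $(H^-_{B_{\tau r}(x_0)})_{a}(t)\le(H^-_{B_{2r}(x_0)})_{a}(t)+c\,r^{\alpha}(a^{q-2}t^{2}+t^{q})$ together with the higher integrability of Lemma~\ref{lem:high} and Lemma~\ref{lem:comparison}; since $r^{\alpha}H^-_{B_{2r}(x_0)}(|{\bf Q}|)\le r^{\alpha-\alpha_3/2}E_*(x_0,2r)$ with $\alpha-\alpha_3/2\ge\alpha_3/2$ and $r^{\alpha_3/2}\le\delta_2$, these corrections cost at most $c\delta_2\tau^{-n}E_*(x_0,2r)$. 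Collecting, $E(x_0,\tau r)\le c(\tau^{2}+\epsilon_0\tau^{-n}+\delta_2\tau^{-n})E_*(x_0,2r)$, and taking $\delta_2\le\epsilon_0=\epsilon$ gives the asserted bound.

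The main obstacle, and the part requiring the most care, is the radius-dependence of the excess functional: the Campanato quantity on $B_{\tau r}$ is built from $H^-_{B_{\tau r}(x_0)}$ rather than from $H^-_{B_{2r}(x_0)}$. Matching the two frozen functions generates $r^{\alpha}$-weighted super-$p$-growth terms in $D\bfu$ that neither decay in $\tau$ nor are individually small; they can only be absorbed by playing them off simultaneously against the $\rho^{\alpha_3/2}$-summand deliberately built into $E_*$ (this is exactly why $E_*$ is defined with $\rho^{\alpha_3/2}$, not $\rho^{\alpha_3}$, and why \eqref{smallness2} is imposed with $\alpha_3/2$) and against the unavoidable $\tau^{-n}$ loss incurred when passing from averages over $B_r$ to averages over $B_{\tau r}$. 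A secondary but essential point is that the entire argument must be confined to the "quadratic" window $|D{\bf w}|\lesssim|{\bf Q}|$, in which $\phi$ is two-sided comparable to a genuine quadratic and the linear $\mathcal A$-harmonic Campanato decay can be imported without loss; staying in that window is precisely what the reverse Hölder bound $\mu\sim|{\bf Q}|(\Phi(x_0,2r,{\bf Q})+r^{\alpha_3/2})^{1/2}\lesssim|{\bf Q}|$ guarantees.
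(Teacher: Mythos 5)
Your proposal is correct and follows essentially the same route as the paper: linearization via Lemma~\ref{lem:lemma4.2ok}, normalization with the shifted function $(H^-_{B_{2r}})_{|\bfQ|}$ and a parameter $\mu\sim|\bfQ|(\Phi+r^{\alpha_3/2})^{1/2}\le|\bfQ|$, application of Lemma~\ref{thm:Aappr_psi}, interior estimates keeping $D\bfw$ in the quadratic window, and a three-way split at scale $\tau r$ in which the mismatch between $H^-_{B_{\tau r}}$ and $H^-_{B_{2r}}$ is absorbed via Lemma~\ref{lem:comparison} and the $\rho^{\alpha_3/2}$ summand of $E_*$. The only (harmless) addition is your explicit dismissal of the case $\bfQ=\mathbf 0$, which the paper leaves implicit.
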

\begin{proof}
In order to enlighten the notation, we will omit the dependence on $x_0$ and write $\bfV_{H^\pm_{2r}}$ and $H^\pm_{2r}$ in place of $\bfV_{H^\pm_{B_{2r}(x_0)}}$ and $H^\pm_{B_{2r}(x_0)}$, respectively. Set ${\bf Q}=(D\bfu)_{2r}$
 and denote by ${\bm \ell}_{2r}:=\bm\ell_{x_0,2r,(D\bfu)_{x_0 , 2r}}$. 
We first observe from \eqref{eq:equivalencebis} and \eqref{smallness1} that
$$\begin{aligned}
\dashint_{B_{2r}} \left|\bfV_{H^-_{2r}}(D\bfu)\right|^2 \,\mathrm{d} x  
& \le 2  \dashint_{B_{2r}} \left|\bfV_{H^-_{2r}}(D\bfu)-\bfV_{H^-_{2r}}(\bfQ)\right|^2 \,\mathrm{d} x +2 \left|\bfV_{H^-_{2r}}(\bfQ)\right|^2\\
& \le c   \dashint_{B_{\tau r}} \left|\bfV_{H^-_{2 r}}(D\bfu)-\big(\bfV_{H^-_{2 r}}(D\bfu) \big)_{2r}\right|^2 \,\mathrm{d} x  +2 \left|\bfV_{H^-_{2r}}(\bfQ)\right|^2\\
& \le c  \delta_1 \dashint_{B_{2r}} \left|\bfV_{H^-_{2r}}(D\bfu)\right|^2 \,\mathrm{d} x   +2 \left|\bfV_{H^-_{2r}}(\bfQ)\right|^2\,.
\end{aligned}$$
We choose $\delta_1\in (0,1)$ small, so that $ c  \delta_1 \le 1/2$,
hence, using the definition of $\bfV_{H^-_{2r}}$ and the fact that $|\bfV_{H^-_{2r}}({\bf P}) |^2 \sim H^-_{2r}(|{\bf P}|)$, we obtain 
\begin{equation}\label{eq:estimateDuQ}
\dashint_{B_{2r}} H^-_{2r}(|D\bfu|) \,\mathrm{d} x   \le c H^-_{2r}(|\bfQ|)\,. 
\end{equation} 

Using \eqref{eq:4.12ok}, \eqref{eq:excessPhi}, \eqref{eq:excess2} and the fact that $\Phi(2r) \lesssim \delta \le 1$ by  \eqref{eq:equivalenceexcess} and \eqref{smallness1}, we get
\begin{equation}\label{eq:4:A-harmonic-inequality}
\begin{split}
\bigg|\dashint_{B_{r}}\langle\mathcal{A}({\bf Q}) (D \bfu - \bfQ) \,|\, D\bm \psi\rangle \,\mathrm{d}x\bigg| 
&\leq c \left\{ \Phi(2r)^{\frac{1}{2}} + \Phi(2r)^\frac{\beta_0}{2} + r^{\frac{\alpha_3}{2}} \right\} \left(\frac{E_*(2r)}{H^-_{2r}(|\bfQ|)}\right)^{\frac{1}{2}} |{\bf Q}| \|D\bm \psi\|_{L^\infty}\\
& \leq \tilde c_{1} \left\{ \delta_1^{\frac{1}{2}} + \delta_1^\frac{\beta_0}{2} + \delta_2 \right\} \left(\frac{E_*(2r)}{H^-_{2r}(|\bfQ|)}\right)^{\frac{1}{2}} |{\bf Q}| \|D\bm \psi\|_{L^\infty}
\end{split}
\end{equation}
for every $\bm \psi\in C^\infty_0(B_{r};\R^N)$.

We next define an $N$-function $\zeta$ by 
\begin{equation} 
\label{eq:psi-def}
   \zeta(t) := \frac{(H^-_{2r})_{|\bfQ|}(t)}{H^-_{2r}(|\bfQ|)}  \sim \frac{H^-_{2r}(|\bfQ|+t)}{H^-_{2r}(|\bfQ|) (|\bfQ|+t)^2}t^2\,, \quad t\ge 0,
   \end{equation} 
where  the equivalence follows by \eqref{(2.6b)}. 
Then  we have 
\begin{displaymath} 
     \left(\frac{t}{|\bfQ|}\right)^2  \leq 4  \frac {H^-_{2r}(|\bfQ| + t)}{H^-_{2r}(|\bfQ|)(|\bfQ|+ t)^2}t^2 
\le  \tilde c_2 \zeta(t), \qquad 
t \in [0,|\bfQ|],   
\end{displaymath}  
for some $\tilde c_2\ge 1$. Moreover, we observe from Lemma~\ref{cor:reverse} and \eqref{eq:linkVphi} that
\begin{align*} 
       \bigg( 
          \dashint_{B_{r}} 
             [\zeta( |D\bfu-\bfQ|)]^{1 + \sigma} \,\d x 
       \bigg)^{\frac {1}{1 + \sigma}}  
& =    \frac {1}{H^-_{2r}(|\bfQ|)}  \bigg( \dashint_{B_{r}} (H^-_{2r})_{|\bfQ|}(|D\bfu-\bfQ|)^{1 + \sigma} \, \d x  \bigg)^{\frac {1}{1 + \sigma}} \\
& \leq \frac {c}{H^-_{2r}(|\bfQ|)}  \dashint_{B_{2r}}  (H^-_{2r})_{|\bfQ|}(|D\bfu - \bfQ |) \, \d x + c  r^{\alpha_1}  \\
& \leq \tilde{c}_3 \, \frac {E_*(2r)}{H^-_{2r}( |\bfQ|)} 
\end{align*}
holds for some constant $\tilde{c}_3 \ge 1$.  

With the constants $\tilde c_1,\tilde c_2,\tilde c_3\ge 1$ determined above and $\tilde c_5\ge 1$ determined below, we define 
$$
\mu
:=   \max \left\{ \tilde{c}_1,\sqrt{\tilde{c}_2\tilde{c}_3(2\tilde{c}_5)^{1/p}} \right\} 
     \left[ \frac {E_*(2r)}{H^-_{2r}( |\bfQ|)} \right]^{\frac 12} |\bfQ|\,.
$$
Then, choosing $\delta_i$ ($i=1,2$) sufficiently small, we see that 
\begin{equation} 
\label{eq:4:mu}
 \mu \leq  \max \left\{ \tilde{c}_1,\sqrt{\tilde{c}_2\tilde{c}_3(2\tilde{c}_5)^{1/p}} \right\} 
     ( q\delta_1 + \delta_2)^{\frac 12}|\bfQ|
<    |\bfQ|.  
\end{equation}
Combining the previous estimates, we obtain 
\begin{equation} 
\label{eq:4:H(Dv)}
\begin{aligned} 
     \bigg(  \dashint_{B_{r}}  [\zeta( | D\bfu - \bfQ|)]^{1 + \sigma} \mathrm{d}x 
     \bigg)^{\frac {1}{1 + \sigma}}                                                    
\leq \tilde{c}_3 \, \frac {E_*(2r)}{H^-_{2r}( |\bfQ|)}                              
\leq \frac{1}{\tilde{c}_2 (2\tilde{c}_5)^{1/p}}\left(\frac{\mu}{|\bfQ|}\right)^2
\leq \frac{1}{(2 \tilde{c}_5)^{1/p}} \zeta(\mu) \,.
\end{aligned}
\end{equation}  
For given $\epsilon$ and $\zeta$ defined as above, we determine the constant $\delta$ as the one in  Lemma~\ref{thm:Aappr_psi}. Then choosing $\delta_i$ ($i=1,2$) sufficiently small such that
\begin{equation} \delta_1^{\frac12} 
                + 
                \delta_1^{\frac{\beta_0}{2}} 
                + 
                \delta_2 \le \delta
\label{smallness}
\end{equation}
 and inserting \eqref{eq:4:mu} and \eqref{smallness} into \eqref{eq:4:A-harmonic-inequality}, we obtain
$$  
\dashint_{B_{r}}  \langle \mathcal{A}(D\bfu - \bfQ) \,|\, D {\bm \psi} \rangle \, \d x                                                                   
\leq  \frac {\tilde{c}_1 ( \delta_1^{\frac12} +  \delta_1^{\frac{\beta_0}{2}} + \delta_2  )}{ \max \left\{ \tilde{c}_1,\sqrt{\tilde{c}_2\tilde{c}_3(2\tilde{c}_5)^{1/p}} \right\} 
} \mu \|D{\bm \psi}\|_{\infty} 
            \le \delta \mu \|D{\bm \psi}\|_{\infty}.
$$
Therefore, we can apply 
Lemma~\ref{thm:Aappr_psi} to the function $\bfu-{\bm \ell}_{2r}$ in place of $\bfu$ and $\phi=\zeta$, so that recalling the definition of $\zeta$ in \eqref{eq:psi-def} we have 
$$
       \frac{1}{H^-_{2r}(|\bfQ|)} 
       \dashint_{B_{r}} (H^-_{2r})_{|\bfQ|}(|D\bfu-\bfQ-D\bfw|)  \, \d x  \leq \varepsilon \zeta(\mu)\,, 
$$
where $\bfw$ is the $\mathcal{A}$-harmonic function in $B_{r}$ with $\bfw = \bfu - {\bm \ell}_{2r}$
on $\partial B_{r}$. Moreover, since
\begin{displaymath} 
          \zeta(\mu) 
\leq c \frac {H^-_{2r}(|\bfQ| + \mu)}{H^-_{2r}(|\bfQ|)(|\bfQ| + \mu)^2}   \mu^2
\leq c \left(\frac{\mu}{|\bfQ|}\right)^2\le c \frac {E_*(2r)}{H^-_{2r}( |\bfQ|)}
\end{displaymath} 
by \eqref{eq:4:mu}, we obtain
\begin{equation} 
\label{eq:4:A-harmonic-mu}
    \dashint_{B_{r}} (H^-_{2r})_{|\bfQ|}(|D\bfu-\bfQ-D\bfw|)  \, \mathrm{d}x  \leq  \tilde{c}_4  \varepsilon E_*(2r)
\end{equation}
for a suitable constant $\tilde{c}_4 > 0$.  We further notice from the gradient estimates for $\bfw$ in \eqref{eq:westimate1} and \eqref{eq:westimate2} and Jensen's inequality that 
\begin{equation*}
\sup_{B_{r/2}}|D\bfw|  \le c \zeta^{-1} \bigg(\dashint_{B_r}\zeta (|D\bfw|) \, \d x \bigg) \le  \tilde{c}_5\zeta^{-1} \bigg(\dashint_{B_r}\zeta (|D\bfu-\bfQ|) \, \d x \bigg)
\end{equation*}
for some $\tilde{c}_5\ge 1$. Therefore, by \eqref{eq:4:H(Dv)} and \eqref{eq:4:mu}, we see that 
\begin{equation}\label{eq:supDwQ}
\sup_{B_{r/2}}|D\bfw|  \le \frac{1}{2} \mu \le \frac{1}{2}|\bfQ|.
\end{equation}

Fix $\tau \in (0,1/4)$. Note that the previous estimate yields  $\frac12 |\bfQ| \le |\bfQ + (D\bfw)_{\tau r}|\le \frac{3}{2}|\bfQ|$, from which, using also \eqref{eq:linkVphi}, we have 
\begin{displaymath} 
\begin{aligned}
&\dashint_{B_{\tau r}}  \Big\vert \bfV_{H^-_{\tau r}}(D\bfu) - \left(\bfV_{H^-_{\tau r}}(D\bfu)\right)_{\tau r} \Big\vert ^2 \, \d x \\
& \le \dashint_{B_{\tau r}} \Big\vert \bfV_{H^-_{\tau r}}(D\bfu) - \bfV_{H^-_{\tau r}}(\bfQ +   (D\bfw)_{\tau r}) \Big\vert^2 \, \d x\\
& \lesssim \dashint_{B_{\tau r}}(H^-_{\tau r})_{|\bfQ +   (D\bfw)_{\tau r}|} (|D\bfu - \bfQ -   (D\bfw)_{\tau r}|) \, \d x\\
& \sim  \dashint_{B_{\tau r}}(H^-_{\tau r})_{|\bfQ|} (|D\bfu - \bfQ -   (D\bfw)_{\tau r}|) \, \d x\\
         &\lesssim \dashint_{B_{\tau r}} \left[(H^-_{\tau r})_{|\bfQ|} (|D\bfu - \bfQ -   D\bfw|) - (H^-_{2r})_{|\bfQ|} (|D\bfu - \bfQ -   D\bfw|) \right]\, \d x \\
&\qquad + \dashint_{B_{\tau r}}  (H^-_{2r})_{|\bfQ|} (|D\bfu - \bfQ -   D\bfw|) \, \d x +  \dashint_{B_{\tau r}} (H^-_{\tau r})_{|\bfQ|} (|D\bfw -   (D\bfw)_{\tau r}|) \, \d x           \\
& =: I_1+I_2+I_3\,.
\end{aligned}
\end{displaymath}
We estimate $I_1$, $I_2$ and $I_3$, separately. Note that by \eqref{eq:4:A-harmonic-mu},
$$
I_2 \lesssim  \varepsilon \tau^{-n} E_*(2r) \,.
$$
 For $I_1$, using the gradient estimates for $\bfw$ in \eqref{eq:westimate1} and \eqref{eq:westimate2} with $\psi(t)=t^p$, H\"older's inequality, \eqref{eq:comparison2}, \eqref{eq:estimateDuQ}, and the smallness assumption \eqref{smallness2} with choosing $\delta_2\le \epsilon$, we have
$$\begin{aligned}
I_1 & \lesssim \dashint_{B_{\tau r}}( a(x)-a^-_{{2r}} ) (|D\bfu|^q + |(D\bfu)_{2r}|^q + |D\bfw|^{q})\, \d x\\
& \lesssim \tau^{-n} \dashint_{B_{r}}( a(x)-a^-_{{2r}} ) |D\bfu|^q\, \d x+r^\alpha |(D\bfu)_{2r}|^q +(|D\bfu|^p)_{r}^{\frac{q}{p}} \dashint_{B_{\tau r}}( a(x)-a^-_{{2r}} ) \, \d x  \\
& \lesssim \tau^{-n} \dashint_{B_{r}}( a(x)-a^-_{{2r}} ) |D\bfu|^q\, \d x+  r^\alpha  (|D\bfu|^{p(1+\sigma_0)})_{2r}^{\frac{q-p}{p(1+\sigma_0)}} (H^-_{2r}(|D\bfu|))_{2r} \\
& \lesssim  r^{\alpha_2} \tau^{-n}  (H_{2r}^-( |D\bfu| ))_{B_{2r}}  +   r^{\alpha_0}    (H^-_{2r}(|D\bfu|))_{2r}\\
& \lesssim   r^{\alpha_3} \tau^{-n}  H^-_{2r}(|\bfQ|) \lesssim \varepsilon \tau^{-n} E_*(2r) \,,
\end{aligned}$$ 
where we used also estimate \eqref{eq:estimq-p} and the definition of $\alpha_3$ in  \eqref{alpha3}. 
For $I_3$, by \eqref{(2.6b)}, the regularity estimates for $\bfw$ in \eqref{eq:westimate1}, \eqref{eq:supDwQ} and \eqref{eq:westimate2} with $\phi(t)=t^p$
$$\begin{aligned}
I_3   \lesssim   (H^-_{\tau r})_{|\bfQ|} (\tau r \sup_{B_{r/4}}\ |D^2\bfw|)   &\lesssim    (H^-_{\tau r})_{|\bfQ|} (\tau   \sup_{B_{r/2}}  |D\bfw|)  
\sim \tau^2  (H^-_{\tau r})_{|\bfQ|} \bigg(  \sup_{B_{r/2}}\, |D\bfw| \bigg)  \\
&   \lesssim \tau^2  (H^+_{2r})_{|\bfQ|} \Big(  (|D\bfw|^p)_{B_{r}}^{1/p} \Big) 
\lesssim \tau^2  H^+_{2r} \Big(  (|D\bfu -{\bf Q}|^p)_{B_{r}}^{1/p} \Big) \,.
\end{aligned}$$
Moreover, by \eqref{eq:estimq-p} and \eqref{eq:estimateDuQ}, we have
\begin{equation*}
I_3    \lesssim  \tau^{2} \left[ (|D\bfu - {\bf Q}|^p)_{B_{2r}} + a^-_{2r} (|D\bfu - {\bf Q}|^p)_{B_{2r}}^{q/p}  + r^{\alpha}(|D\bfu - {\bf Q}|^p)_{B_{2r}}^{(q-p)/p} \right]  \lesssim  \tau^{2} r^{\alpha_3} H^-_{2r}(|\bfQ|) \lesssim \tau^{2} E_*(2r) \,.
\end{equation*}
Consequently, combining the above results, we obtain the desired estimate.  
\end{proof}

\subsection{Degenerate regime: almost $\varphi$-harmonic functions} \label{sec:almostphiharmonic}

Now, we deal with the degenerate regime. Here, we use the assumption \eqref{eq:degenereassump}, in place of \eqref{eq:1.12ok}. 

Fix $B_{2r}=B_{2r}(x_0)\subset \Omega'$, for some $\Omega'\Subset \Omega$, satisfying \eqref{eq:assless1}. 
We further introduce the \emph{Morrey-type} excess
\begin{equation}
\Psi(x_0, \rho):=\dashint_{B_\rho(x_0)} H^{-}_{B_{\rho}(x_0)}(|D\bfu|)\,\mathrm{d}x \,. 
\label{eq:morreyexcess}
\end{equation}

The first result is that every weak solution to \eqref{eq:system1} is almost $H^{-}_{B_{2r}}$-harmonic.

\begin{lemma}
For every $\delta\in(0,1)$, the inequality
\begin{equation}\label{lem:almostHharmonic}
\begin{split}
& \left |\dashint_{B_{r}(x_0)} \bigg\langle (H_{B_{2r}(x_0)}^-) '(|D \bfu|)\frac{D \bfu}{|D \bfu|} \, \bigg|\, D\bm \psi \bigg\rangle\,\mathrm{d}x\right| \\
& \,\,\,\, \leq c_* \left( \delta+ \frac{(H_{B_{2r}(x_0)}^-)^{-1}(\Psi(x_0, 2r))}{\kappa} + r^{\alpha_2}\right)\left(\Psi(x_0,2r) + H_{B_{2r}(x_0)}^-(\|D\bm \psi\|_\infty)\right)
\end{split}
\end{equation}
holds for every $\bm \psi\in C^\infty_0(B_{r}(x_0);\R^n)$ and for some constant $c_*=c_*(n,N,p,q,\nu,L, \alpha, [a]_{C^{0,\alpha}})>0$, where $\kappa=\kappa(\delta)>0$ is given in \eqref{eq:degenereassump}  and $\alpha_2$ is the exponent in  Lemma \ref{lem:comparison}.
\end{lemma}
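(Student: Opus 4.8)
The plan is to combine the weak formulation \eqref{eq:system1} with the degeneracy condition \eqref{eq:degenereassump}. Abbreviate $H^-:=H^-_{B_{2r}(x_0)}$, $a^-:=a^-_{x_0,2r}$ and $B_\rho:=B_\rho(x_0)$. Since any $\bm\psi\in C^\infty_0(B_r)$, extended by $0$, is admissible in \eqref{eq:system1}, one has $\dashint_{B_r}\langle{\bf A}(x,D\bfu)\,|\,D\bm\psi\rangle\,\mathrm dx=0$, hence
\[
\dashint_{B_r}\Big\langle (H^-)'(|D\bfu|)\tfrac{D\bfu}{|D\bfu|}\,\Big|\,D\bm\psi\Big\rangle\,\mathrm dx
=\dashint_{B_r}\Big\langle (H^-)'(|D\bfu|)\tfrac{D\bfu}{|D\bfu|}-{\bf A}(x,D\bfu)\,\Big|\,D\bm\psi\Big\rangle\,\mathrm dx .
\]
I would then add and subtract $H'(x,|D\bfu|)\tfrac{D\bfu}{|D\bfu|}$ and estimate three contributions: (i) the frozen--coefficient error, whose pointwise size is $|(H^-)'(|D\bfu|)-H'(x,|D\bfu|)|=q(a(x)-a^-)|D\bfu|^{q-1}$, since the two vectors are parallel; (ii) the genuinely degenerate piece over $G:=\{|D\bfu|\le\kappa\}\cap B_r$, where \eqref{eq:degenereassump} gives $|H'(x,|D\bfu|)\tfrac{D\bfu}{|D\bfu|}-{\bf A}(x,D\bfu)|\le\delta\,H'(x,|D\bfu|)$; and (iii) the piece over $\{|D\bfu|>\kappa\}\cap B_r$, where the growth bound \eqref{eq:1.8ok1} gives $|H'(x,|D\bfu|)\tfrac{D\bfu}{|D\bfu|}-{\bf A}(x,D\bfu)|\le(1+L)H'(x,|D\bfu|)$. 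Throughout I will also use the splitting $H'(x,t)=(H^-)'(t)+q(a(x)-a^-)t^{q-1}$, so that each contribution reduces to a term involving only $(H^-)'$ plus an $a$--oscillation term, recalling that $H^-$ satisfies \eqref{(2.1celokbis)} with $p_1=p$, $p_2=q$, so that $(H^-)'(t)\sim H^-(t)/t$ and $(H^-)^*\big((H^-)'(t)\big)\sim H^-(t)$ by \eqref{ineq:phiast_phi_p} and \eqref{eq:2.6ok}.

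For every $a$--oscillation term, after pairing with $|D\bm\psi|\le\|D\bm\psi\|_\infty$ I would invoke the comparison estimate \eqref{eq:comparison1}, then Young's inequality for the $N$--function $H^-$ together with \eqref{eq:2.6ok} and Jensen's inequality, to bound it by $c\,r^{\alpha_2}\big(\Psi(x_0,2r)+H^-(\|D\bm\psi\|_\infty)\big)$ (here $\Psi$ is as in \eqref{eq:morreyexcess} and $\alpha_2$ is from Lemma~\ref{lem:comparison}). For the main part of (ii), applying $st\le (H^-)^*(t)+H^-(s)$ with $t=(H^-)'(|D\bfu|)$, $s=|D\bm\psi|$ and then \eqref{eq:2.6ok},
\[
\delta\dashint_{B_r}(H^-)'(|D\bfu|)\,|D\bm\psi|\,\mathrm dx\le c\delta\Big(\dashint_{B_r}H^-(|D\bfu|)\,\mathrm dx+H^-(\|D\bm\psi\|_\infty)\Big)\le c\delta\big(2^n\Psi(x_0,2r)+H^-(\|D\bm\psi\|_\infty)\big),
\]
using $\dashint_{B_r}\le 2^n\dashint_{B_{2r}}$ and $\dashint_{B_{2r}}H^-(|D\bfu|)=\Psi(x_0,2r)$.

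For the main part of (iii) the decisive observation is that on $\{|D\bfu|>\kappa\}$ one has, by \eqref{ineq:phiast_phi_p}, $(H^-)'(|D\bfu|)\le\kappa^{-1}|D\bfu|\,(H^-)'(|D\bfu|)\sim\kappa^{-1}H^-(|D\bfu|)$, whence
\[
\dashint_{\{|D\bfu|>\kappa\}\cap B_r}(H^-)'(|D\bfu|)\,|D\bm\psi|\,\mathrm dx\le\frac{c\,\|D\bm\psi\|_\infty}{\kappa}\dashint_{B_r}H^-(|D\bfu|)\,\mathrm dx\le\frac{c\,\|D\bm\psi\|_\infty}{\kappa}\,\Psi(x_0,2r).
\]
Setting $\lambda_0:=(H^-)^{-1}(\Psi(x_0,2r))$, so $H^-(\lambda_0)=\Psi(x_0,2r)$, I then redistribute $\|D\bm\psi\|_\infty\Psi(x_0,2r)=\|D\bm\psi\|_\infty H^-(\lambda_0)\sim\lambda_0\|D\bm\psi\|_\infty(H^-)'(\lambda_0)\le\lambda_0\big(H^-(\|D\bm\psi\|_\infty)+(H^-)^*((H^-)'(\lambda_0))\big)\le c\lambda_0\big(H^-(\|D\bm\psi\|_\infty)+\Psi(x_0,2r)\big)$, again via \eqref{ineq:phiast_phi_p}, Young's inequality and \eqref{eq:2.6ok}; this produces exactly the factor $(H^-)^{-1}(\Psi(x_0,2r))/\kappa$. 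Collecting (i)--(iii), with the residual $a$--oscillation terms absorbed into the $r^{\alpha_2}$ part, yields \eqref{lem:almostHharmonic}. The main obstacle is precisely this last step: on the large--gradient set one cannot afford a full, non-small multiple of $\Psi(x_0,2r)$, and the only available smallness is $(H^-)^{-1}(\Psi(x_0,2r))/\kappa$; extracting it requires the superlinearity inequality $(H^-)'(t)\lesssim H^-(t)/t$ followed by a carefully ordered use of Young's inequality to split $\|D\bm\psi\|_\infty\Psi(x_0,2r)$. A secondary, pervasive difficulty is that every estimate must be transferred from $H(x,\cdot)$ to the frozen $H^-$, which is what forces the systematic use of the comparison estimates \eqref{eq:comparison1}--\eqref{eq:comparison2} and generates the $r^{\alpha_2}$ contributions.
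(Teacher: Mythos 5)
Your proposal is correct and follows essentially the same route as the paper: use the weak formulation to replace $(H^-)'(|D\bfu|)\frac{D\bfu}{|D\bfu|}$ by its difference with ${\bf A}(x,D\bfu)$, insert $H'(x,|D\bfu|)\frac{D\bfu}{|D\bfu|}$, treat the $a$-oscillation via Lemma~\ref{lem:comparison} (giving the $r^{\alpha_2}$ term), apply \eqref{eq:degenereassump} on $\{|D\bfu|\le\kappa\}$ and the growth bound with $(H^-)'(t)\lesssim H^-(t)/\kappa$ on $\{|D\bfu|>\kappa\}$, and finish with Young's inequality and \eqref{eq:2.6ok} to produce $\Psi(x_0,2r)+H^-(\|D\bm\psi\|_\infty)$. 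The only (cosmetic) difference is that the paper routes the large-gradient bound through the reverse H\"older estimate \eqref{eq:Hreverse0} and $H^-_{2r}((D\bfu)_{2r})$, whereas you enlarge $B_r$ to $B_{2r}$ directly and redistribute $\|D\bm\psi\|_\infty\Psi(x_0,2r)$ at the end; both yield the same factor $(H^-)^{-1}(\Psi(x_0,2r))/\kappa$.
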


\begin{proof}
For simplicity, we write $B_\rho=B_\rho(x_0)$ and $H_{\rho}^-(t):=H_{B_{\rho}(x_0)}^-(t)$, and $\Psi(\rho)=\Psi(x_0,\rho)$  for $\rho\in(0,2r]$. Let $\bm \psi\in C^\infty_0(B_{r};\R^n)$ be such that $\|D\bm \psi\|_\infty\leq1$. Then, by the definition of weak solution, we have 
\begin{equation*}
\begin{split}
\dashint_{B_{r}} \bigg\langle (H_{2r}^-) '(|D \bfu|)\frac{D \bfu}{|D \bfu|} \,\bigg| \, D\bm \psi \bigg\rangle \,\mathrm{d}x & = \dashint_{B_{r}} \bigg\langle (H_{2r}^-)'(|D \bfu|)\frac{D \bfu}{|D \bfu|}-H'(x,|D \bfu|)\frac{D \bfu}{|D \bfu|}  \,\bigg| \, D\bm \psi \bigg\rangle\,\mathrm{d}x \\
& \,\,\,\,\,\, + \dashint_{B_{r}} \bigg\langle H'(x,|D \bfu|)\frac{D \bfu}{|D \bfu|}-{\bf A}(x, D\bfu)\,\bigg|\,  D\bm \psi \bigg\rangle\,\mathrm{d}x \\
& =: I_1+I_2\,. 
\end{split}
\end{equation*}
We start with the estimate of $I_2$. Observe that
$$\begin{aligned}
|I_2| & \le \delta \dashint_{B_r} H'(x,|D\bfu|)\chi_{\{|Du|\le \kappa \}} \, \mathrm{d}x + c \dashint_{B_r} H'(x,|D\bfu|) \chi_{\{|Du| > \kappa \}} \, \mathrm{d}x\\
&  \le \delta \dashint_{B_r} H'(x,|D\bfu|) \, \mathrm{d} x + \frac{c}{\kappa} \dashint_{B_r} H(x,|D\bfu|)  \, \mathrm{d} x\,,
\end{aligned}$$
where we used \eqref{eq:degenereassump} on the set $\{|Du| \leq \kappa \}$, while we exploited the growth assumption \eqref{eq:1.8ok1} combined with \eqref{ineq:phiast_phi_p} for $\varphi(t):=H(x,t)$, for every fixed $x$, elsewhere. 
Note that  from \eqref{eq:Hreverse0}, \eqref{eq:H'reverse}  and Jensen's inequality with \eqref{eq:morreyexcess},
$$
\dashint_{B_r} H(x,|D\bfu|) \, \mathrm{d} x  \le c  H_{2r}^-\left((D{\bf u})_{2r}\right)\le c  (H_{2r}^-)^{-1}\left(\Psi(2r)\right)  ((H_{2r}^-)' \circ (H_{2r}^-)^{-1})\left(\Psi(2r)\right)  \,,
$$
and
$$
\dashint_{B_r} H'(x,|D\bfu|) \, \mathrm{d} x \le  c  ( (H_{{2r}}^-)' \circ (H_{{2r}}^-)^{-1}) (\Psi(2r))\,.
$$
Therefore, we have 
$$
|I_2| \le c \left(\delta+ \frac{(H_{2r}^-)^{-1}(\Psi(2r))}{\kappa}\right)  ((H_{2r}^-)'\circ (H_{2r}^-)^{-1})\left(\Psi(2r)\right).
$$
Moreover, we have from \eqref{eq:comparison1} that
$$
|I_1| \le c r^{\alpha_2} ((H_{2r}^-)'\circ (H_{2r}^-)^{-1})\left(\Psi(2r)\right).
$$

Collecting all the previous estimates, we obtain 
\begin{equation*}
\begin{split}
 & \bigg |\dashint_{B_{r}} (H_{2r}^-) '(|D \bfu|)\frac{D \bfu}{|D \bfu|}: D\bm \psi\,\mathrm{d}x\bigg| \\
 &\hspace{1.5cm} \lesssim \left(\delta+ \frac{(H_{2r}^-)^{-1}(\Psi(2r))}{\kappa} + r^{\alpha_2} \right)((H_{2r}^-)'\circ (H_{2r}^-)^{-1})\left(\Psi(2r)\right) \|D\bm \psi\|_\infty\,.
\end{split}
\end{equation*}
To conclude, we use \eqref{eq:2.6ok} and Young's inequality, to obtain
\begin{equation*}
\begin{split}
 ((H_{B_{2r}}^-)'\circ (H_{B_{2r}}^-)^{-1})\left(\Psi(2r)\right) \|D\bm \psi\|_\infty
 &\leq c (H_{2r}^-)^*\left(((H_{2r}^-)'\circ (H_{2r}^-)^{-1})(\Psi(2r))\right) + H_{2r}^-(\|D\bm \psi\|_\infty) \\
&\leq c \Psi(2r) + H_{2r}^-(\|D\bm \psi\|_\infty)\,.
\end{split}
\end{equation*}
\end{proof}

%
%

We recall the exponent $\gamma_0\in(0,1)$ from Proposition~\ref{lemma:2:G-harmonic-holder}.  We are now in position to prove the excess decay estimate in the degenerate regime. 
\begin{lemma}\label{lemma:4:degenerate-decay}
For every $\gamma \in (0,\gamma_0)$ and $\chi>0$, there exists $\tau,\delta_3,\delta_4>0$ depending on $n$, $N$, $p$, $q$, $\nu$, $L$, $\alpha$, $[a]_{C^{0,\alpha}}$, $\gamma$ and $\chi$ such that if
\begin{equation}
\chi  \dashint_{B_{2r}(x_0)} \left|\bfV_{H^-_{2r}}(D\bfu)\right|^2 \,\mathrm{d} x 
\le  \dashint_{B_{2r}(x_0)} \left|\bfV_{H^-_{2r}}(D\bfu)-\left(\bfV_{H^-_{2r}}(D\bfu)\right)_{x_0,2r}\right|^2 \,\mathrm{d} x\,,
\label{eq:smallexcess1}
\end{equation}
\begin{equation}
\dashint_{B_{2r}(x_0)} \left|\bfV_{H^-_{2r}}(D\bfu)-\left(\bfV_{H^-_{2r}}(D\bfu)\right)_{x_0,2r}\right|^2 \,\mathrm{d} x \le \delta_3\,,
\label{eq:smallexcess2}
\end{equation}
and
\begin{equation}
r^{\alpha_2} \le \delta_4\,,
\label{eq:smallradius}
\end{equation}
then
\begin{equation}
\begin{split}
 \dashint_{B_{2\tau r}(x_0)} & \left |\bfV_{H^-_{\tau r}}(D\bfu)-\left(\bfV_{H^-_{\tau r}}(D\bfu)\right)_{x_0,\tau r}\right|^2 \,\mathrm{d} x \\
 & \qquad \qquad \,\,\,\,\,\,\,\,\,\,\,\,\,\,\,\,\,\,\,\,\,\,\, \,\,\,\,\,\,\, \le  \tau^{2\gamma} \dashint_{B_{2r}(x_0)} \left|\bfV_{H^-_{2r}}(D\bfu)-\left(\bfV_{H^-_{2r}}(D\bfu)\right)_{x_0,2r}\right|^2 \,\mathrm{d} x\,.
\end{split}
\label{eq:degeneratedecayest}
\end{equation}
Here, $H^-_{2r}:=H^-_{B_{2r}(x_0)}$.
\end{lemma}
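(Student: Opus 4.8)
The plan is to combine the almost $\varphi$-harmonicity from the previous lemma (with $\varphi=H^-_{2r}$) together with the $\varphi$-harmonic approximation lemma (Lemma~\ref{lem:phiharmapprox}), then transfer the excess decay estimate \eqref{eq:excessdecayw} for $\varphi$-harmonic maps from Proposition~\ref{lemma:2:G-harmonic-holder} back to $\bfu$, and finally absorb the error terms using the degeneracy assumption \eqref{eq:smallexcess1}. The role of \eqref{eq:smallexcess1} is crucial: in the degenerate regime the quantity $H^-_{2r}(|(D\bfu)_{2r}|)$ need not control the full excess, so instead we work with the Morrey-type excess $\Psi(x_0,2r)$ of \eqref{eq:morreyexcess}; indeed, by \eqref{eq:linkVphi}, \eqref{eq:equivalencebis} and Jensen, the smallness \eqref{eq:smallexcess1}--\eqref{eq:smallexcess2} gives $\Psi(x_0,2r)\sim \dashint_{B_{2r}}|\bfV_{H^-_{2r}}(D\bfu)-(\bfV_{H^-_{2r}}(D\bfu))_{x_0,2r}|^2\,\mathrm{d}x =: \mathcal{E}_{2r}$, up to a constant depending on $\chi$. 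This identification is what lets us close the argument purely in terms of the excess.

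First I would fix $\gamma\in(0,\gamma_0)$ and $\chi>0$ and record that, under \eqref{eq:smallexcess1}, $\mathcal{E}_{2r}\le \Psi(x_0,2r)\le c(\chi)\,\mathcal{E}_{2r}$. Then, applying the previous lemma with a parameter $\delta$ to be chosen, and using that the reverse H\"older inequality of Lemma~\ref{cor:reverse0} (together with \eqref{eq:H'reverse}) supplies the higher-integrability hypothesis $\big(\dashint_{B_r}\varphi(|D\bfu|)^{s}\big)^{1/s}\le c_0\dashint_{B_r}\varphi(|D\bfu|)$ needed for Lemma~\ref{lem:phiharmapprox} with $\varphi=H^-_{2r}$, I would check that $\bfu$ is almost $H^-_{2r}$-harmonic on $B_r$ with smallness constant
$$
\delta_*:= c_*\Big(\delta + \tfrac{(H^-_{2r})^{-1}(\Psi(x_0,2r))}{\kappa(\delta)} + r^{\alpha_2}\Big)\,.
$$
Here $(H^-_{2r})^{-1}(\Psi(x_0,2r))\sim (H^-_{2r})^{-1}(\mathcal{E}_{2r})\to 0$ as $\mathcal{E}_{2r}\to 0$, so by first choosing $\delta$ small (fixing $\kappa=\kappa(\delta)$), then $\delta_3$ small enough that $(H^-_{2r})^{-1}(\mathcal{E}_{2r})/\kappa$ is small, and then $\delta_4$ small, we can make $\delta_*$ smaller than the threshold $\delta$ provided by Lemma~\ref{lem:phiharmapprox} for a given $\varepsilon$. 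Then Lemma~\ref{lem:phiharmapprox} yields an $H^-_{2r}$-harmonic $\bfw\in\bfu+W^{1,H^-_{2r}}_0(B_r;\R^N)$ with $\dashint_{B_r}|\bfV_{H^-_{2r}}(D\bfu)-\bfV_{H^-_{2r}}(D\bfw)|^2\,\mathrm{d}x\le \varepsilon\,\dashint_{B_{2r}}H^-_{2r}(|D\bfu|)\,\mathrm{d}x = \varepsilon\,\Psi(x_0,2r)\le c(\chi)\varepsilon\,\mathcal{E}_{2r}$, and by the Calder\'on--Zygmund estimate \eqref{eq:westimate2} also $\dashint_{B_r}H^-_{2r}(|D\bfw|)\lesssim \Psi(x_0,2r)$.

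Next I would run the standard comparison on the small ball $B_{2\tau r}$: estimate the excess of $D\bfu$ by that of $D\bfw$ plus the approximation error, i.e. using \eqref{eq:linkVphi}, \eqref{eq:equivalencebis} and minimality of averages,
$$
\dashint_{B_{2\tau r}}\!\!|\bfV_{H^-_{\tau r}}(D\bfu)-(\bfV_{H^-_{\tau r}}(D\bfu))_{x_0,\tau r}|^2\,\mathrm{d}x
\lesssim \underbrace{\dashint_{B_{2\tau r}}\!\!|\bfV_{H^-_{2r}}(D\bfw)-(\bfV_{H^-_{2r}}(D\bfw))_{x_0,2\tau r}|^2}_{\text{use }\eqref{eq:excessdecayw}\ \lesssim\ (\tau)^{2\gamma_0}\mathcal{E}_{2r}}
+ \tau^{-n}\varepsilon\,c(\chi)\mathcal{E}_{2r} + (\text{frozen-coefficient error}),
$$
where two further points need care: (i) passing from $\bfV_{H^-_{\tau r}}$ to $\bfV_{H^-_{2r}}$, which costs a term controlled by $(a^-_{x_0,\tau r}-a^-_{x_0,2r})|D\bfu|^q$, handled by the comparison estimate \eqref{eq:comparison2} of Lemma~\ref{lem:comparison} and hence bounded by $r^{\alpha_2}H^-_{2r}((D\bfu)_{x_0,2r})\lesssim r^{\alpha_2}\Psi(x_0,2r)\lesssim \delta_4\,c(\chi)\mathcal{E}_{2r}$; and (ii) the decay estimate \eqref{eq:excessdecayw} is stated with radii $\tau R$ and $R$, so one applies it with $R=r$ (after noting $\bfw$ is $H^-_{2r}$-harmonic on $B_r$) and absorbs $B_{2\tau r}\subset B_{4\tau r}$ into the factor, which only changes constants. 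Altogether we obtain
$$
\dashint_{B_{2\tau r}}|\bfV_{H^-_{\tau r}}(D\bfu)-(\bfV_{H^-_{\tau r}}(D\bfu))_{x_0,\tau r}|^2\,\mathrm{d}x
\le C(\chi)\big(\tau^{2\gamma_0} + \tau^{-n}\varepsilon + \delta_4\big)\,\mathcal{E}_{2r}\,.
$$
Finally, since $\gamma<\gamma_0$, I would first fix $\tau\in(0,1/4)$ so small that $C(\chi)\tau^{2\gamma_0}\le \tfrac13\tau^{2\gamma}$, then choose $\varepsilon$ (hence, going backwards, $\delta$ and $\delta_3$) so that $C(\chi)\tau^{-n}\varepsilon\le\tfrac13\tau^{2\gamma}$, and $\delta_4$ so that $C(\chi)\delta_4\le\tfrac13\tau^{2\gamma}$; this yields \eqref{eq:degeneratedecayest}.

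\textbf{Main obstacle.} The delicate point is the bookkeeping of the order in which the small parameters are fixed: $\kappa=\kappa(\delta)$ in \eqref{eq:degenereassump} is determined \emph{after} $\delta$, so the term $(H^-_{2r})^{-1}(\Psi(x_0,2r))/\kappa(\delta)$ in the almost-harmonicity constant can only be made small by choosing $\delta_3$ (controlling $\Psi(x_0,2r)\sim\mathcal{E}_{2r}$) small \emph{relative to} the already-fixed $\kappa(\delta)$ — and all of this must be compatible with the threshold $\delta$ coming out of Lemma~\ref{lem:phiharmapprox}, which itself depends on the target $\varepsilon$, which in turn is chosen after $\tau$. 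Making this chain of dependencies non-circular (i.e. $\tau\rightsquigarrow\varepsilon\rightsquigarrow\delta\rightsquigarrow\kappa\rightsquigarrow\delta_3\rightsquigarrow\delta_4$) is the real content; the comparison-type manipulations with the shifted $N$-functions $H^-_{2r}$, $H^-_{\tau r}$ and the transfer via \eqref{eq:comparison2} are routine given the machinery already assembled in Sections~\ref{sec:prelim} and \ref{sec:caccio_reverse}.
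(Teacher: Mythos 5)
Your proposal follows the paper's proof essentially step for step: almost $H^-_{2r}$-harmonicity fed into Lemma~\ref{lem:phiharmapprox}, the decay estimate \eqref{eq:excessdecayw} for the $H^-_{2r}$-harmonic comparison map on the small ball, the frozen-coefficient error between $\bfV_{H^-_{\tau r}}$ and $\bfV_{H^-_{2r}}$ handled via \eqref{eq:comparison2}, and the same non-circular ordering $\tau\rightsquigarrow\epsilon\rightsquigarrow\delta\rightsquigarrow\kappa\rightsquigarrow\delta_3\rightsquigarrow\delta_4$ of the small parameters. This is the paper's argument; no gaps.
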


\begin{proof}
To enlighten notation, we omit the explicit dependence $x_0$. 

We first determine $\tau=\tau(n,N,p,q,\nu,L,\alpha,[a]_{C^{0,\alpha}},\gamma,\chi)>0$ small so that
\begin{equation}\label{choosetau}
\tau \le \frac{1}{4} 
\quad\text{and}\quad
\tilde c_6 \tau^{2\gamma_0}\chi^{-1} \le \tau^{2\gamma}\,, 
\end{equation}
where $\tilde c_6=\tilde c_6(n,N,p,q,\nu,L,\alpha,[a]_{C^{0,\alpha}})>0$ will be determined later. Set 
\begin{equation}\label{chooseepsilon}
\epsilon = \tau^{2\gamma_0+n} \,.
\end{equation}
For this $\epsilon$, we denote the constant $\delta$  in Lemma~\ref{lem:phiharmapprox}, when $\phi=H^-_{2r}$, $s=1+\sigma$ ($\sigma$ is the constant determined in Lemma~\ref{lem:high}), and $c_0$ is the constant $c$ given in Lemma~\ref{cor:reverse}, by  $\delta_0$. We then choose $\delta$ such that
$$
c_* \delta \le \frac{\delta_0}{2}\,,
$$
where $c_*$ denotes the constant in  \eqref{lem:almostHharmonic}, and  hence $\kappa=\kappa(\delta)$ in \eqref{eq:degenereassump} is also determined. Moreover, by the first two assumptions \eqref{eq:smallexcess1} and \eqref{eq:smallexcess2} we have
$$
\Psi(2r) \le c  \dashint_{B_{2r}} \left|\bfV_{H^-_{2r}}(D\bfu)\right|^2 \,\mathrm{d} x  \le \frac{c}{\chi} \dashint_{B_{2r}} \left|\bfV_{H^-_{2r}}(D\bfu)-\left(\bfV_{H^-_{2r}}(D\bfu)\right)_{2r}\right|^2 \,\mathrm{d} x \le \frac{c}{\chi}\delta_3\,.
$$
Hence, with \eqref{eq:smallradius}, we have
$$
c_* \left( \delta+ \frac{(H_{2r}^-)^{-1}(\Psi(2r))}{\kappa} + r^{\alpha_2}\right) \le \frac{\delta_0}{2}+ c_*\max\{\chi^{-\frac{1}{p}},\chi^{-\frac{1}{q}}\}\frac{(H^-_{2r})^{-1}(\delta_1)}{\kappa} + c_* \delta_4\,.
$$
We choose $\delta_3$ and $\delta_4$ such that 
$$
c_*\max\{\chi^{-\frac{1}{p}},\chi^{-\frac{1}{q}}\}\frac{(H^-_{2r})^{-1}(\delta_4)}{\kappa} + c_* \delta_4 \le  \frac{\delta_0}{2}.
$$ 
Therefore, by Lemma~\ref{lem:phiharmapprox} with $\phi=H^-_{2r}$, we have
\begin{equation}\label{Hharmonicapproximation}
\dashint_{B_{r}} \left|\bfV_{H^-_{
2r}}(D\bfu)-\bfV_{H^-_{2r}}(D\bfw)\right|^2 \,\mathrm{d} x \le  c \epsilon  \Psi(2r),
\end{equation}
where $\bfw \in \bfu+W^{1,H^-_{2r}}_0(B_r)$ is the unique $H^-_{2r}$-harmonic mapping coinciding with $\bfu$ on $\partial B_r$.

Therefore, for $\tau\in (0,1/4)$,
\begin{equation}\begin{split}\label{eq:degenerateexcesstau}
\dashint_{B_{2\tau r}} \biggl|\bfV_{H^-_{2\tau r}}&(D\bfu)-\left(\bfV_{H^-_{2\tau r}}(D\bfu)\right)_{2\tau r}\biggr|^2 \,\mathrm{d} x  \le   \dashint_{B_{2\tau r}} \biggl|\bfV_{H^-_{2\tau r}}(D\bfu)-\left(\bfV_{H^-_{2r}}(D\bfw)\right)_{2\tau r}\biggr|^2 \,\mathrm{d} x\\
& \le 4 \dashint_{B_{2\tau r}} \biggl|\bfV_{H^-_{2\tau r}}(D\bfu)-\bfV_{H^-_{2r}}(D\bfu)\biggr|^2 \,\mathrm{d} x  + 4 \dashint_{B_{2\tau r}} \biggl|\bfV_{H^-_{2r}}(D\bfu)-\bfV_{H^-_{2r}}(D\bfw)\biggr|^2 \,\mathrm{d} x\\
&\qquad + 2 \dashint_{B_{2\tau r}} \biggl|\bfV_{H^-_{2r}}(D\bfw)-\left(\bfV_{H^-_{2r}}(D\bfw)\right)_{2\tau r}\biggr|^2 \,\mathrm{d} x\,.
\end{split}\end{equation}
Note that, since $|\sqrt{1+t_1}-\sqrt{1+t_2}|^2\le |t_1-t_2|$ for $t_1,t_2\ge 0$, for every ${\bf P}\in\R^{N\times n}$ we have 
\begin{equation*}
\begin{split}
\left|\bfV_{H^-_{2\tau r}}(\bfP) - \bfV_{H^-_{2r}}(\bfP) \right|^2 & = |\bfP|^{p}\left|\sqrt{1+a^-_{2\tau r}\frac{q}{p}|\bfP|^{q-p}} -\sqrt{1+a^-_{2r}\frac{q}{p}|\bfP|^{q-p}} \right|^2\\
& \le \frac{q}{p} (a^-_{2\tau r}-a^-_{2r}) |\bfP|^q\,.
\end{split}\end{equation*}
Using this, \eqref{eq:comparison2}  and \eqref{eq:smallradius}, we have
\begin{equation}\label{eq:VHcomparison2}\begin{split}
 \dashint_{B_{2\tau r}} \left|\bfV_{H^-_{2\tau r}}(D\bfu)-\bfV_{H^-_{2r}}(D\bfu)\right|^2 \,\mathrm{d} x 
 & \le c  \dashint_{B_{2\tau r}}  (a^-_{2\tau r}-a^-_{2r}) |D\bfu|^q\,\mathrm{d} x\\ 
 &\le c  \tau^{-n} \dashint_{B_{r}}  (a(x)-a^-_{2r}) |D\bfu|^q
 \,\mathrm{d} x\\
 & \le c \tau^{-n} r^{\alpha_2}\dashint_{B_{2r}} \left|\bfV_{H^-_{2r}}(D\bfu)\right|^2 \,\mathrm{d} x \\
 & \le \tilde c_6 \tau^{-n} \delta_4 \dashint_{B_{2r}} \left |\bfV_{H^-_{2r}}(D\bfu)\right|^2 \,\mathrm{d} x
 \end{split}\end{equation}
 for some constant $\tilde c_6>0$. We further choose $\delta_4$ such that $ \delta_4\le \tau^{n+2\gamma_0}$.
Inserting \eqref{eq:excessdecayw}, \eqref{Hharmonicapproximation} with \eqref{chooseepsilon}, and \eqref{eq:VHcomparison2} into \eqref{eq:degenerateexcesstau} and using  assumption \eqref{eq:smallexcess1} and \eqref{choosetau}, we finally obtain
\begin{equation*}\begin{split}
\dashint_{B_{2\tau r}} \left|\bfV_{H^-_{2 \tau r}}(D\bfu)-\left(\bfV_{H^-_{\tau r}}(D\bfu)\right)_{2 \tau r}\right|^2 \,\mathrm{d} x 
&\le \tilde c_1 \tau^{2\gamma_0}  \dashint_{B_{2r}} \left|\bfV_{H^-_{2r}}(D\bfu)\right|^2 \,\mathrm{d} x\\
& \le \tilde c_1 \tau^{2\gamma_0} \chi^{-1} \dashint_{B_{2r}} \left|\bfV_{H^-_{2r}}(D\bfu)-\left(\bfV_{H^-_{2r}}(D\bfu)\right)_{2r}\right|^2 \,\mathrm{d} x\\
& \le \tau^{2\gamma} \dashint_{B_{2r}} \left|\bfV_{H^-_{2r}}(D\bfu)-\left(\bfV_{H^-_{2r}}(D\bfu)\right)_{2r}\right|^2 \,\mathrm{d} x\,. 
\end{split}\end{equation*}
This concludes the proof of \eqref{eq:degeneratedecayest}. 
\end{proof}

\subsection{Iteration in the nondegenerate regime} \label{sec:iterationnondeg}

In this section we set up the iteration scheme which proves the partial regularity 
of the weak solution $\bfu$ to the system \eqref{system1}. 
First we consider the nondegenerate case and, from Lemma~\ref{lem:nondegeneratedecay}, 
prove the following result.

\begin{lemma} 
\label{lemma:5:iteration-nondegenerate} 
 Let $B_{2R}(x_0)\Subset \Omega$ with $R\in(0,1/4)$ satisfy \eqref{eq:assless1} with $r=R$, and $0 < \beta  \le \alpha_3/4$, where $\alpha_3\in(0,1)$ 
is given by \eqref{alpha3}. 
There exist $\delta_5,\delta_6 > 0$ depending on $n$, $N$, 
$p$, $q$, $\nu$, $L$, $\alpha$, $[a]_{C^{0,\alpha}}$, $\beta_0$ and $\beta$  such that the following 
property holds: if 
\begin{equation} 
\label{eq:5:nondegenerate-decay-1}
      \dashint_{B_R(x_0)} \Big\vert \bfV_{H^{-}_{B_R(x_0)}}(D\bfu) - \left(\bfV_{H^-_{B_R(x_0)}}(D\bfu)\right)_{x_0,R} \Big\vert^2 \, \mathrm{d}x  
  \leq \delta_5 \dashint_{B_R(x_0)} \Big\vert \bfV_{H^-_{B_R(x_0)}}(D\bfu) \Big\vert^2 \, \mathrm{d}x
\end{equation}
and 
\begin{equation}                      
\label{eq:5:nondegenerate-decay-2}
R^{\frac{\alpha_3}{2}} 
  \leq \delta_6 \,, 
\end{equation} 
then we have
\begin{equation}
\label{eq:5:nondegenerate-holder}
\begin{aligned}
& \dashint_{B_r(x_0)} 
 \Big\vert \bfV_{H^-_{B_r(x_0)}}(D\bfu) - \left(\bfV_{H^-_{B_r(x_0)}}(D\bfu)\right)_{x_0,r} \Big\vert^2 \, \mathrm{d}x                              \\
& \leq c \left( \frac rR \right)^{2\beta}
       \dashint_{B_R(x_0)} \Big\vert \bfV_{H^-_{B_R(x_0)}}(D\bfu) - \left(\bfV_{H^-_{B_R(x_0)}}(D\bfu)\right)_{x_0,R} \Big\vert^2 \, \mathrm{d}x  
    +  c \, r^{2 \beta} 
       \dashint_{B_R(x_0)} \Big\vert \bfV_{H^-_{B_R(x_0)}}(D\bfu) \Big\vert^2 \, \mathrm{d}x
\end{aligned}
\end{equation}
for every $r \in (0,R)$.
\end{lemma}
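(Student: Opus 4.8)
The plan is the classical Campanato iteration: apply the one-step decay of Lemma~\ref{lem:nondegeneratedecay} along a geometric sequence of concentric balls and sum up. It is convenient to write, for $\rho\le R$,
$\mathcal{E}(\rho):=\dashint_{B_\rho(x_0)}\vert\bfV_{H^-_{B_\rho(x_0)}}(D\bfu)-(\bfV_{H^-_{B_\rho(x_0)}}(D\bfu))_{x_0,\rho}\vert^2\,\mathrm{d}x$ and $\mathcal{H}(\rho):=\dashint_{B_\rho(x_0)}\vert\bfV_{H^-_{B_\rho(x_0)}}(D\bfu)\vert^2\,\mathrm{d}x$; then $\mathcal{E}(\rho)\sim E(x_0,\rho)$ by \eqref{eq:equivalenceexcess}, $\mathcal{H}(\rho)\sim\dashint_{B_\rho}H^-_{B_\rho}(\vert D\bfu\vert)\,\mathrm{d}x$ by \eqref{ineq:phiast_phi_p}, and, whenever $\mathcal{E}(\rho)\le\delta_1\mathcal{H}(\rho)$ with $\delta_1$ small, the triangle and Jensen inequalities give $\mathcal{H}(\rho)\sim H^-_{B_\rho(x_0)}(\vert(D\bfu)_{x_0,\rho}\vert)$. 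Recalling \eqref{eq:excess2} and using $\rho^{\alpha_3/2}\le\rho^{2\beta}$ for $\rho<1$ (valid since $\beta\le\alpha_3/4$), Lemma~\ref{lem:nondegeneratedecay} reads: for each $\epsilon>0$ and $\tau\in(0,1/4)$ there are $\delta_1,\delta_2>0$ such that $\mathcal{E}(\rho)\le\delta_1\mathcal{H}(\rho)$ and $\rho^{\alpha_3/2}\le\delta_2$ imply $\mathcal{E}(\tfrac\tau2\rho)\le c\,\tau^2(1+\epsilon\,\tau^{-n-2})\big(\mathcal{E}(\rho)+\rho^{2\beta}\mathcal{H}(\rho)\big)$.

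First I fix the parameters. Choose an intermediate exponent $\gamma_1\in(\beta,1)$ (nonempty since $\beta\le\alpha_3/4<1$), set $\lambda:=\tau/2$ and $\epsilon:=\tau^{n+2}$, and take $\tau$ so small---depending only on the structural data, $\beta_0$, $\beta$ and $\gamma_1$---that $c\,\tau^2(1+\epsilon\,\tau^{-n-2})$ times the implied constants above is $\le\lambda^{2\gamma_1}$ and moreover $2\lambda^{2\gamma_1}<\lambda^{2\beta}$ and $4\lambda^{2\gamma_1}<1$; this determines $\delta_1,\delta_2$. Writing $\sigma_j:=\lambda^jR$, $e_j:=\mathcal{E}(\sigma_j)$, $h_j:=\mathcal{H}(\sigma_j)$ and $\theta_j:=e_j/h_j$, applying the one-step estimate at $\rho=\sigma_j$ (so the ball called $B_{2r}$ in Lemma~\ref{lem:nondegeneratedecay} is $B_{\sigma_j}$) gives $e_{j+1}\le\lambda^{2\gamma_1}h_j(\theta_j+\sigma_j^{2\beta})$, provided $\theta_j\le\delta_1$ (and $\sigma_j^{\alpha_3/2}\le\delta_2$, automatic from $\lambda<1$ and \eqref{eq:5:nondegenerate-decay-2} once $\delta_6\le\delta_2$).

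The core of the argument---and the main obstacle---is to propagate, by induction on $j$, that $\theta_j\le\delta_1$ and $\tfrac12 h_0\le h_j\le 2h_0$. Using $h_j\sim H^-_{B_{\sigma_j}}(\vert(D\bfu)_{x_0,\sigma_j}\vert)$, the change of $h$ over one step is governed by: (i) the change of the frozen coefficient, $H^-_{B_{\sigma_{j+1}}}(t)-H^-_{B_{\sigma_j}}(t)=(a^-_{x_0,\sigma_{j+1}}-a^-_{x_0,\sigma_j})t^q$, whose integral against $D\bfu$ is $\le c\,\sigma_j^{\alpha_2}h_j$ by Lemma~\ref{lem:comparison} (with $2r=\sigma_j$), up to a further $\sigma_j^{\alpha_0}h_j$ from the $\vert(D\bfu)_{x_0,\sigma_j}\vert^q$ part handled as in \eqref{eq:estimq-p}; and (ii) the change of the reference matrix, $\vert(D\bfu)_{x_0,\sigma_{j+1}}-(D\bfu)_{x_0,\sigma_j}\vert\le\lambda^{-n}\dashint_{B_{\sigma_j}}\vert D\bfu-(D\bfu)_{x_0,\sigma_j}\vert\,\mathrm{d}x\le c\,\lambda^{-n}\theta_j^{1/2}\vert(D\bfu)_{x_0,\sigma_j}\vert$, where the last inequality uses the splitting of $B_{\sigma_j}$ into $\{\vert D\bfu-{\bf Q}\vert\ge\tfrac12\vert{\bf Q}\vert\}$ and its complement together with \eqref{eq:gradestim}, with ${\bf Q}=(D\bfu)_{x_0,\sigma_j}$. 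Hence $h_{j+1}=(1+\eta_j)h_j$ with $\vert\eta_j\vert\le c(\theta_j^{1/2}+\sigma_j^{\alpha_2})$, and then $\theta_{j+1}\le 2\lambda^{2\gamma_1}(\theta_j+\sigma_j^{2\beta})$; solving this linear recursion with $2\lambda^{2\gamma_1}<\lambda^{2\beta}$ gives $\theta_j\le c_0(\lambda^{2j\gamma_1}\delta_5+\lambda^{2j\beta}\delta_6)$, which decays geometrically, so $\sum_j\vert\eta_j\vert\le c(\delta_5^{1/2}+\delta_6^{1/2})$ is finite and small. Choosing $\delta_5,\delta_6$ small enough then closes the induction: $\theta_j\le\delta_1$ for all $j$, and $h_j=h_0\prod_{i<j}(1+\eta_i)\in[\tfrac12 h_0,2h_0]$.

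Finally, once smallness is secured, the one-step bound is the recursion $e_{j+1}\le\lambda^{2\gamma_1}e_j+c\,\lambda^{2\gamma_1}\sigma_j^{2\beta}h_0$; summing it (using $\gamma_1>\beta$, so that $\sum_{i<j}\lambda^{2(j-i)\gamma_1}\sigma_i^{2\beta}\le c\,\sigma_j^{2\beta}$) gives $e_j\le c\,\lambda^{2j\gamma_1}e_0+c\,\sigma_j^{2\beta}h_0\le c(\sigma_j/R)^{2\beta}\mathcal{E}(R)+c\,\sigma_j^{2\beta}\mathcal{H}(R)$ for all $j$, where we also used $\lambda^{2j\gamma_1}\le\lambda^{2j\beta}=(\sigma_j/R)^{2\beta}$. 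For arbitrary $r\in(0,R)$ I pick $j$ with $\sigma_{j+1}\le r<\sigma_j$ and transfer the bound from $B_{\sigma_j}$ to $B_r$: this costs $\mathcal{E}(r)\le c\,\lambda^{-n}\mathcal{E}(\sigma_j)$ up to the change of the frozen function $H^-_{B_r}$ versus $H^-_{B_{\sigma_j}}$, which as in (i) is of order $c\,\sigma_j^{\alpha_3}\mathcal{H}(\sigma_j)\le c\,r^{2\beta}\mathcal{H}(R)$; since $\sigma_j\le r/\lambda$, combining yields exactly \eqref{eq:5:nondegenerate-holder}. As emphasized by the authors, the delicate point is the propagation step (i)--(ii): because the excess is anchored to the $r$-dependent objects $H^-_{B_r(x_0)}$ and $(D\bfu)_{x_0,r}$, one must show both drift across scales by amounts summable in the iteration index so that the recursions for $\theta_j$ and $h_j$ close; the remaining manipulations of geometric series are routine.
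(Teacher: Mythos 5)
Your proposal is correct and follows essentially the same route as the paper: a Campanato iteration of Lemma~\ref{lem:nondegeneratedecay} along the geometric sequence of balls, with a simultaneous induction propagating the smallness ratio and the boundedness of the height while accounting for the scale-dependence of $H^-_{B_r(x_0)}$ via Lemma~\ref{lem:comparison}, followed by interpolation to arbitrary radii $r$. The only cosmetic difference is that you control the height multiplicatively through the drift of the averages $(D\bfu)_{x_0,\sigma_j}$, whereas the paper uses a telescoping $L^2$ sum of the quantities $\bfV_k(D\bfu)$ together with the monotonicity $a^-_{x_0,\tau^{k-1}R}\le a^-_{x_0,\tau^{k}R}$; both devices close the same induction.
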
 

\begin{proof}
As usual, throughout the proof we omit the dependence on the point $x_0$, and write $H^\pm_{B_\rho(x_0)}=H^\pm_{\rho}$.

\smallskip 

\emph{Step~1. Choice of parameters.} 
Choose the parameters $\tau$ and $\varepsilon$ in Lemma~\ref{lem:nondegeneratedecay} 
as follows  
\begin{equation}
\label{eq:5:tau-epsilon}
  \tau
:= \min \left\{ 
           \left( \frac {1}{2c^\ast} \right)^{\frac{1}{1 - \beta}},
           \left( \frac {1}{16} \right)^{\frac {1}{1 - \beta}} 
        \right\} 
\qquad \text{and} \qquad 
   \varepsilon 
:= \frac {\tau^{n + 1 + \beta}}{2c^\ast},
\end{equation}
where the constant $c^\ast>0$ will be determined below.
This determines $\delta_1$ and $\delta_2$ in Lemma~\ref{lem:nondegeneratedecay}. 
We next choose $\delta_5$ and $\delta_6$ as follows: 
\begin{equation}
\label{eq:5:delta3-delta4}
   \delta_5
:= \min \left\{
           \delta_1,\frac {1}{8(1 + \tau^{-n})},
           \frac {(\sqrt2 - 1)^2(1 - \tau^{\beta})^2\tau^n}{2} 
        \right\} 
\quad \text{and} \quad
    \delta_6 
:= \min \left\{ \delta_2,\delta_5 \right\}.
\end{equation}


\emph{Step~2. Induction.}  
We prove by induction that the following inequalities hold 
\begin{subequations} 
\begin{align}
\label{eq:5:induction-1}
&      \dashint_{B_{\tau^kR}} 
       \vert \bfV_k(D\bfu) - (\bfV_k(D\bfu))_{\tau^kR} \vert^2 \, \mathrm{d}x 
  \leq \tau^{2\tilde{\beta}k}\delta_5 
       \dashint_{B_{\tau^kR}} \vert \bfV_k(D\bfu) \vert^2 \, \mathrm{d}x;                                  \\ 
\label{eq:5:induction-2} 
& \begin{aligned}
  &      \dashint_{B_{\tau^kR}} \vert \bfV_k(D\bfu) - (\bfV_k(D\bfu))_{\tau^kR} \vert ^2 \, \mathrm{d}x 
    \leq \tau^{(1 +\tilde{\beta)}k}
         \dashint_{B_R} \vert \bfV_0(D\bfu) - (\bfV_0(D\bfu))_R \vert^2 \, \mathrm{d}x                     \\
  & \hspace*{200pt} 
    +    \frac {1 - \tau^{(1-\tilde{\beta})k}}
                {1 - \tau^{1 - \tilde{\beta}}}(\tau^kR)^{2\tilde{\beta}}  
         \dashint_{B_R} 
         \hspace*{-5pt} 
         \vert \bfV_0(D\bfu) \vert^2 \, \mathrm{d}x;
  \end{aligned}                                                                        \\ 
\label{eq:5:induction-3}
&      \dashint_{B_{\tau^kR}} \vert \bfV_k(D\bfu) \vert^2 \, \mathrm{d}x
  \leq 2\dashint_{B_R} \vert \bfV_0(D\bfu) \vert^2 \, \mathrm{d}x 
\end{align}
\end{subequations}
for every $k \geq 0$, where
$\bfV_{k} := \bfV_{H^-_{\tau^kR}}$.
For convenience, in the sequel we shall write $\eqref{eq:5:induction-1}_k$, 
$\eqref{eq:5:induction-2}_k$ and $\eqref{eq:5:induction-3}_k$ to denote 
\eqref{eq:5:induction-1}, \eqref{eq:5:induction-2} and \eqref{eq:5:induction-3} 
for a specific value of $k$. Clearly, $\eqref{eq:5:induction-1}$, 
$\eqref{eq:5:induction-2}$ and $\eqref{eq:5:induction-3}$ hold for $k = 0$ by \eqref{eq:5:nondegenerate-decay-1}.

We next suppose that $\eqref{eq:5:induction-1}_h$, $\eqref{eq:5:induction-2}_h$ 
and $\eqref{eq:5:induction-3}_h$ hold for $h = 0,1,2,\ldots,k - 1$ for some $k \geq 1$ 
and then prove $\eqref{eq:5:induction-1}_k$, $\eqref{eq:5:induction-2}_k$ and 
$\eqref{eq:5:induction-3}_k$.
By \eqref{eq:5:delta3-delta4}, \eqref{eq:5:nondegenerate-decay-2} and $\eqref{eq:5:induction-1}_{k-1}$, we see that \eqref{smallness1} 
and \eqref{smallness2} hold for $r = \tau^{k - 1}R/2$. 
Hence, we can apply Lemma~\ref{lem:nondegeneratedecay} with $r = \tau^{k - 1}R/2$ and replacing $\tau$ by $2\tau$ 
to get 
\begin{align*}
&        \dashint_{B_{\tau^k R}} 
         \vert \bfV_k(D\bfu) - (\bfV_k(D\bfu))_{\tau^k R} \vert^2 \, \mathrm{d}x                                \\
& 
  \leq c^\ast \tau^2(1 + \varepsilon \tau^{-n - 2}) 
       \bigg( 
          \dashint_{B_{\tau^{k - 1}R}} \vert \bfV_{k-1}(D\bfu) - (\bfV_{k-1}(D\bfu))_{\tau^{k - 1}R} \vert^2 \, \mathrm{d}x                                
          + 
          (\tau^{k - 1}R)^{\frac{\alpha_3}{2}}  
          \dashint_{B_{\tau^{k - 1}R}} \vert \bfV_{k-1}(D\bfu) \vert ^2 \, \mathrm{d}x                                
       \bigg)
\end{align*} 
for some constant $c^*>0$.
Note that since $c^\ast\tau^{1 - \beta} \leq \frac{1}{2}$ by \eqref{eq:5:tau-epsilon},
$c^\ast \varepsilon\tau^{-\beta - n - 1} = \frac{1}{2}$ and
\begin{displaymath} 
     c^\ast \tau^2(1 + \varepsilon \tau^{-n - 2}) 
=    \tau^{1 + \beta}(c^\ast \tau^{1 - \beta} + c^\ast \varepsilon \tau^{-\beta - n - 1})
\leq \tau^{1 + \beta}.
\end{displaymath} 
Hence, recalling the facts that $\beta<\alpha_3/4$, $\delta_6 \leq \delta_5$ by \eqref{eq:5:delta3-delta4} and $\tau^{1 - \beta} \leq \frac{1}{16}$ 
by \eqref{eq:5:tau-epsilon},
and using $\eqref{eq:5:induction-1}_{k-1}$, \eqref{eq:5:nondegenerate-decay-2}, 
we see that 
\begin{equation} 
\label{eq:5:lem61pf2} 
\begin{aligned}
&      \dashint_{B_{\tau^kR}} \vert \bfV_{k}(D\bfu) - (\bfV_{k}(D\bfu))_{\tau^kR} \vert^2 \, \mathrm{d}x                                               \\
& \quad 
  \leq \tau^{1 + \beta} 
       \bigg( 
          \dashint_{B_{\tau^{k - 1}R}} 
          \vert \bfV_{k-1}(D\bfu) - (\bfV_{k-1}(D\bfu))_{\tau^{k - 1}R} \vert^2 \, \mathrm{d}x                                    
          +
          (\tau^{k - 1}R)^{\frac{\alpha_3}{2}}  
          \dashint_{B_{\tau^{k - 1}R}} \vert \bfV_{k-1}(D\bfu) \vert^2 \, \mathrm{d}x                               
       \bigg)                                                                          \\
& \quad 
  \leq \tau^{1 - \beta}\tau^{2\beta} 
       \bigg( 
          \tau^{2\beta(k - 1)} \delta_5
          \dashint_{B_{\tau^{k - 1}R}} \vert \bfV_{k-1}(D\bfu) \vert^2 \, \mathrm{d}x                                  
          + 
          \tau^{2\beta(k - 1)} \delta_6  
          \dashint_{B_{\tau^{k - 1}R}} \vert \bfV_{k-1}(D\bfu) \vert^2 \, \mathrm{d}x                                
       \bigg)                                                                          \\
& \quad 
  \leq \frac 18\tau^{2 \beta k} \delta_5 
       \dashint_{B_{\tau^{k - 1}R}} \vert \bfV_{k-1}(D\bfu) \vert^2 \, \mathrm{d}x                            . 
  \smash[b]{\vphantom{\left( \int_{B_R} \right)}}
\end{aligned}
\end{equation} 
On the other hand, by $\eqref{eq:5:induction-1}_{k-1}$ and the fact that 
$4(1 + \tau^{-n})\delta_5 \leq \frac{1}{2}$ by \eqref{eq:5:delta3-delta4}, we have 
\begin{displaymath} 
\begin{aligned}
        \dashint_{B_{\tau^{k - 1}R}} 
        \vert \bfV_{k-1}(D\bfu) \vert^2 \, \mathrm{d}x                               
& \leq 4\dashint_{B_{\tau^{k - 1}R}} 
        \vert \bfV_{k-1}(D\bfu) - (\bfV_{k-1}(D\bfu))_{\tau^{k - 1}R} \vert^2 \, \mathrm{d}x                                \\
& \qquad\quad
  +    4\vert (\bfV_{k-1}(D\bfu))_{\tau^{k - 1}R} - (\bfV_{k-1}(D\bfu))_{\tau^kR} \vert^2 
  +    4\dashint_{B_{\tau^kR}} \hspace*{-5pt} \vert \bfV_{k-1}(D\bfu) \vert^2 \, \mathrm{d}x                                                  \\
& \leq 4\left( 1 + \tau^{-n} \right) 
        \dashint_{B_{\tau^{k - 1}R}} 
        \vert \bfV_{k-1}(D\bfu) - (\bfV_{k-1}(D\bfu))_{\tau^{k - 1}R} \vert^2 \, \mathrm{d}x                                                             
  +    4\dashint_{B_{\tau^kR}} \vert \bfV_{k-1}(D\bfu) \vert^2 \, \mathrm{d}x                                                                 \\
& \leq 4\left( 1 + \tau^{-n} \right) \delta_5 
        \dashint_{B_{\tau^{k - 1}R}} \vert \bfV_{k-1}(D\bfu) \vert^2 \, \mathrm{d}x                                
       + 
       4\dashint_{B_{\tau^kR}} \vert \bfV_{k-1}(D\bfu) \vert^2 \, \mathrm{d}x                                                                 \\
& \leq \frac 12 
       \dashint_{B_{\tau^{k - 1}R}} \vert \bfV_{k-1}(D\bfu) \vert^2 \, \mathrm{d}x                                
       +
       4\dashint_{B_{\tau^kR}} \vert \bfV_{k-1}(D\bfu) \vert^2 \, \mathrm{d}x                                
\end{aligned}
\end{displaymath} 
which implies that
\begin{displaymath} 
      \dashint_{B_{\tau^{k - 1}R}} \vert \bfV_{k-1}(D\bfu) \vert^2 \, \mathrm{d}x                               
\leq 8\dashint_{B_{\tau^kR}} \vert \bfV_{k-1}(D\bfu) \vert^2 \, \mathrm{d}x                                \leq 8\dashint_{B_{\tau^kR}} \vert \bfV_{k}(D\bfu) \vert^2 \, \mathrm{d}x
\end{displaymath} 
since clearly $a^-_{\tau^{k-1} R} \leq a^-_{\tau^{k} R}$ and this gives $(H_{\tau^{k-1} R}^-)' \leq (H_{\tau^{k} R}^-)'$. Inserting this into \eqref{eq:5:lem61pf2}, we obtain $\eqref{eq:5:induction-1}_k$.

We next show that $\eqref{eq:5:induction-2}_k$ holds. 
From 
\eqref{eq:5:lem61pf2}, $\eqref{eq:5:induction-2}_{k-1}$ 
and $\eqref{eq:5:induction-3}_{k-1}$, we have 
\begin{displaymath} 
\begin{aligned}
&     \dashint_{B_{\tau^kR}} 
      \vert \bfV_{k}(D\bfu) - (\bfV_{k}(D\bfu))_{\tau^kR} \vert^2 \, \mathrm{d}x                                                                   \\
& 
  \leq \tau^{1 + \beta} 
       \bigg( 
          \dashint_{B_{\tau^{k - 1}R}} 
          \vert \bfV_{k-1}(D\bfu) - (\bfV_{k-1}(D\bfu))_{\tau^{k - 1}R} \vert^2 \, \mathrm{d}x                                    
          +
          (\tau^{k - 1}R)^{\frac{\alpha_3}{2}}  
          \dashint_{B_{\tau^{k - 1}R}} \vert \bfV_{k-1}(D\bfu) \vert^2 \, \mathrm{d}x                               
       \bigg)              \\                        
&
  \leq \tau^{1 + \beta} 
       \dashint_{B_{\tau^{k - 1}R}} \vert \bfV_{k-1}(D\bfu) - (\bfV_{k-1}(D\bfu))_{\tau^{k - 1}R} \vert^2 \, \mathrm{d}x                                
       +    
       \tau^{1-\beta}(\tau^kR)^{2\beta}  
       \dashint_{B_{\tau^{k - 1}R}} \vert \bfV_{k-1}(D\bfu) \vert^2 \, \mathrm{d}x                                                            \\
&  
  \leq \tau^{( 1 + \beta)k} 
       \dashint_{B_R} \vert \bfV_{0}(D\bfu) - (\bfV_{0}(D\bfu))_R \vert^2 \, \mathrm{d}x                                                              \\ 
&  
    \quad   + 
        \tau^{1 + \beta} \frac {1 - \tau^{(1 - \beta)(k - 1)}} 
              {1 - \tau^{1 - \beta}}(\tau^{k - 1}R)^{2\tilde{\beta}} 
       \dashint_{B_R} \vert \bfV_{0}(D\bfu) \vert^2 \, \mathrm{d}x                                     
       + 
       (\tau^{k}R)^{2\beta}  
       \dashint_{B_R} \vert \bfV_{0}(D\bfu) \vert^2 \, \mathrm{d}x                                                                         \\
&  
  =    \tau^{(1 + \beta)k} 
       \dashint_{B_R} \vert \bfV_{0}(D\bfu) - (\bfV_{0}(D\bfu))_R \vert^2 \, \mathrm{d}x                                
       + 
       \frac {1 - \tau^{(1 - \beta)k}} 
              {1 - \tau^{1 - \beta}}(\tau^kR)^{2\beta} 
       \dashint_{B_R} \vert \bfV_{0}(D\bfu) \vert^2 \, \mathrm{d}x                                
\end{aligned} 
\end{displaymath} 
which is $\eqref{eq:5:induction-2}_k$. 

Finally, by $\eqref{eq:5:induction-1}_h$ and $\eqref{eq:5:induction-3}_h$ with $h = 0,1,2,\dots,k - 1$ and the fact that 
$\tau^{-\frac n2}(2\delta_3)^{\frac 12} \frac {1}{1 - \tau^{\tilde{\beta}}} \leq \sqrt{2} - 1$ 
by \eqref{eq:5:delta3-delta4}, we obtain 

\begin{displaymath} 
\begin{aligned}
       \bigg( \dashint_{B_{\tau^kR}} \vert \bfV_{k}(D\bfu) \vert^2 \, \mathrm{d}x \bigg)^{\frac 12} 
& \leq \tau^{-\frac n2}
       \sum_{h = 0}^{k - 1} 
       \bigg(
          \dashint_{B_{\tau^hR}} \vert \bfV_{h+1}(D\bfu) - (\bfV_{h}(D\bfu))_{\tau^hR} \vert^2 \, \mathrm{d}x 
       \bigg)^{\frac 12}                                       
       +    
       \bigg( \dashint_{B_R} \vert \bfV_{0}(D\bfu) \vert^2 \, \mathrm{d}x \bigg)^{\frac 12}                 \\
& \leq \tau^{-\frac n2} \delta_5^{\frac 12} 
       \sum_{h = 0}^{k - 1} 
       \tau^{\beta h} 
       \bigg( \dashint_{B_{\tau^hR}} \vert \bfV_h(D\bfu) \vert^2 \, \mathrm{d}x \bigg)^{\frac 12} 
       +
       \bigg( \dashint_{B_R}  \vert \bfV_0(D\bfu) \vert^2 \, \mathrm{d}x \bigg)^{\frac 12}                 \\
& \leq \bigg( 
          \tau^{-\frac n2} (2\delta_5)^{\frac 12} \frac {1}{1-\tau^{\beta}} + 1 
       \bigg)
       \bigg(  \dashint_{B_R} \vert \bfV_0 (D \bfu) \vert^2 \, \mathrm{d}x \bigg)^{\frac 12}                \\
& \leq \bigg( 2\dashint_{B_R} \vert \bfV_0 (D \bfu) \vert^2 \, \mathrm{d}x \bigg)^{\frac 12} 
\end{aligned} 
\end{displaymath}
which implies $\eqref{eq:5:induction-3}_k$.

\smallskip 

\textit{Step~3. Decay estimates.}
Let $r \in (0,R)$. Then $\tau^{k + 1}R \leq r < \tau^kR$ for some $k \geq 0$. Therefore, 
by the same estimation as in \eqref{eq:VHcomparison2} we have
\begin{displaymath} 
\begin{aligned}
& \dashint_{B_r} \vert \bfV_{H^{-}_{r}}(D \bfu) - (\bfV_{H^{-}_r}(D \bfu))_r \vert^2 \, \mathrm{d}x 
\le \dashint_{B_r} \vert \bfV_{H^{-}_{r}}(D \bfu) - (\bfV_{k}(D \bfu))_{\tau^kR} \vert^2 \, \mathrm{d}x \\ 
&\le 2 \dashint_{B_{r}}\vert \bfV_{H^{-}_{r}}(D \bfu) - \bfV_{k}(D \bfu) \vert^2 \, \mathrm{d}x  + 2 \dashint_{B_{r}} \vert \bfV_{k}(D \bfu) - (\bfV_{k}(D \bfu))_{\tau^kR} \vert^2 \, \mathrm{d}x \\ 
&\le  c \tau^{-n}(\tau^k R)^{\alpha_2} \dashint_{B_{\tau^kR}} \vert \bfV_{k}(D \bfu) \vert^2 \, \mathrm{d}x  + c\tau^{-n} \dashint_{B_{\tau^kR}} \vert \bfV_{k}(D \bfu) - (\bfV_{k}(D \bfu))_{\tau^kR} \vert^2 \, \mathrm{d}x \\
&=: I+II \,.
\end{aligned}                         
\end{displaymath} 
For $I$, using \eqref{eq:5:induction-3}, 
we have 
$$
I \lesssim  \tau^{-n-\alpha_2}(\tau^{k+1} R)^{\frac{\alpha_2}{2}} \dashint_{B_{R}} \vert \bfV_{0}(D \bfu) \vert^2 \, \mathrm{d}x 
\lesssim  \tau^{-n-\alpha_2} r^{\frac{\alpha_2}{2}} \dashint_{B_{R}} \vert \bfV_{0}(D \bfu) \vert^2 \, \mathrm{d}x \,.
$$
For $II$, by \eqref{eq:5:induction-2}, we have 
\begin{displaymath} 
\begin{aligned}
II &
  \lesssim \tau^{-n}\tau^{(1 + \beta)k} 
       \dashint_{B_R} \vert \bfV_{0}(D\bfu) - (\bfV_0(D\bfu))_R \vert^2 \, \mathrm{d}x  
       +    
       \tau^{-n}\frac {1 - \tau^{(1 - \beta)k}} 
                       {1 - \tau^{1 - \beta}}(\tau^kR)^{2\beta} 
       \dashint_{B_R} \vert \bfV_{0}(D\bfu) \vert^2 \, \mathrm{d}x                                           \\
&  \lesssim  \tau^{-n-1-\beta} \left( \frac {r}{R} \right)^{2\beta}
       \dashint_{B_R} \vert \bfV_0(D\bfu) - (\bfV_{0}(D\bfu))_R \vert^2 \, \mathrm{d}x  
  +    \frac {\tau^{-n-1-2\beta}}{1 - \tau^{1 - \beta}}
       \left( \frac {r}{\tau} \right)^{2 \beta} 
       \dashint_{B_R} \vert \bfV_0(D\bfu) \vert^2 \, \mathrm{d}x\,.
\end{aligned} 
\end{displaymath} 
Consequently, recalling the definition \eqref{eq:5:tau-epsilon} of $\tau$, we obtain 
\eqref{eq:5:nondegenerate-holder}.
\end{proof}

\section{Proof of Theorem~\ref{thm:1:main-thm}} \label{sec:proofmainthm}

We are now in position to prove Theorem~\ref{thm:1:main-thm}. 

\begin{proof}[Proof of Theorem~\ref{thm:1:main-thm}]
Let $\gamma_0$ be the exponent of Lemma~\ref{lemma:2:G-harmonic-holder}, and fix $\gamma \in (0,\gamma_1)$, where
\begin{displaymath} 
   \beta 
:= \min \left\{ \frac{\gamma_0}{2}, \frac {\alpha_3}{4}\right\}\,. 
\end{displaymath} 
With this $\beta$, we find $\delta_5$ and $\delta_6$ in Lemma~\ref{lemma:5:iteration-nondegenerate}.
We also choose $\chi = \delta_5$  and $\gamma=\beta$ in Lemma~\ref{lemma:4:degenerate-decay}.
Consequently, $\delta_5$ and $\delta_6$ in Lemma~\ref{lemma:5:iteration-nondegenerate} 
and $\delta_3$, $\delta_4$ and $\tau$ in Lemma~\ref{lemma:4:degenerate-decay} 
are determined and depend only on the structure constants. 

Now, choose any point $x_1 \in \Omega$ satisfying
\begin{displaymath} 
\liminf_{r \to 0^+} \dashint_{B_r(x_1)} \Big\vert \bfV_{H^-_{B_r(x_1)}}(D\bfu) - (\bfV_{H^-_{B_r(x_1)}}(D\bfu))_{x_1,r} \Big\vert^2 \, \mathrm{d}x = 0 
\end{displaymath} 
and 
\begin{displaymath} 
   M 
:= \limsup_{r \to 0^+} \dashint_{B_r(x_1)} \Big\vert  \bfV_{H^-_{B_r(x_1)}}(D\bfu) \Big\vert^2 \, \mathrm{d}x 
<  +\infty\,. 
\end{displaymath} 
Fix $\Omega'\Subset\Omega$ such that $x_1\in \Omega'$. Note that there exists $r_0 \in (0,1/4)$ such that the inequalities in \eqref{eq:assless1} hold whenever $B_{2r}(x_0)\subset \Omega'$ and $r\in (0,r_0)$. Moreover, we can find $R_0\in (0,r_0/2)$ such that $B_{2R_0}(x_1)\subset \Omega'$,
and moreover 
\begin{equation}
\label{eq:5:mainthmpf-0}
R_0^{\frac{\alpha_3}{2}} \leq \min\left\{\frac {\delta_3}{4(M + 1)}, \delta_6,\delta_4\right\}\,, 
\end{equation}
\begin{equation*}
\dashint_{B_{R_0}(x_1)} \Big\vert  \bfV_{H^-_{B_{R_0}}(x_1)} (D\bfu) - ( \bfV_{H^-_{B_{R_0}(x_1)}}(D\bfu))_{x_1,R_0} \Big\vert^2 \, \mathrm{d}x 
\leq \frac {\delta_3}{4}
\ \mbox{ and } \ 
 \dashint_{B_{R_0}(x_1)} \Big\vert  \bfV_{H^-_{B_{R_0}(x_1)}}(D\bfu) \Big\vert^2 \, \mathrm{d}x 
\leq M + 1\,.
\end{equation*}
Therefore, by the continuity of the integrals above with respect to the translation 
of the domain of integration, there exists $R_1\in(0,R_0)$ such that for 
every $x_0 \in B_{R_1}(x_0)$ we have 
\begin{equation}
\label{eq:5:mainthmpf-1}
\dashint_{B_{R_0}(x_0)} \bigg \vert \bfV_{H^-_{B_{R_0}(x_0)}}(D\bfu) - (\bfV_{H^-_{B_{R_0}(x_0)}}(D\bfu))_{x_0,R_0} \bigg \vert^2 \, \mathrm{d}x \leq \frac {\delta_3}{2} 
\ \ \text{and}\ \ 
\dashint_{B_{R_0}(x_0)} \Big \vert \bfV_{H^-_{B_{R_0}(x_0)}}(D\bfu) \Big \vert^2 \, \mathrm{d}x \leq 2(M + 1)\,.
\end{equation}

Now, we fix an arbitrary point $x_0\in B_{R_1}(x_1)$, and write
$$
\bfV_{k}(\bfP) := \bfV_{H^-_{B_{\tau^k R_0}(x_0)}} (\bfP)\,, 
\qquad \bfP\in \R^{N\times n}\,.
$$
As usual,  throughout the remaining part we omit the dependence on the point $x_0$ and write $H_r^\pm:= H^\pm_{B_r(x_0)}$. We first suppose that 
\begin{equation} 
\label{eq:5:mainthmpf-2}
     \delta_5\dashint_{B_{\tau^kR_0}} \vert \bfV_{k}(D\bfu) \vert^2 \, \mathrm{d}x 
\leq \dashint_{B_{\tau^kR_0}} \vert \bfV_{k}(D\bfu) - (\bfV_{k}(D\bfu))_{\tau^{k}R_0} \vert^2 \, \mathrm{d}x 
\qquad 
\text{for every $k \geq 0$}\,.
\end{equation}
In view of \eqref{eq:5:mainthmpf-0} and \eqref{eq:5:mainthmpf-1},
 applying Lemma~\ref{lemma:4:degenerate-decay} inductively 
for $r=\tau^kR_0/2$, we have   
\begin{equation} 
\label{eq:5:mainthmpf-2bis} 
\begin{aligned}
       & \dashint_{B_{\tau^kR_0}} 
       \vert \bfV_{k}(D\bfu) - (\bfV_{k}(D\bfu))_{\tau^kR_0} \vert^2 \, \mathrm{d}x \\
& \,\,\,\, \leq \tau^{2\beta} 
       \dashint_{B_{\tau^{k -1}R_0}} 
       \vert \bfV_{k-1}(D\bfu) - (\bfV_{k-1}(D\bfu))_{\tau^{k - 1}R_0} \vert^2 \, \mathrm{d}x \\ 
& \,\,\,\, \leq \ldots \leq 
       \tau^{2k\beta} 
       \dashint_{B_{R_0}} 
       \vert \bfV_{0}(D\bfu) - (\bfV_{0}(D\bfu))_{R_0} \vert^2 \, \mathrm{d}x \leq  \tau^{2k \beta} \frac{\delta_3}{2} 
\end{aligned} 
\end{equation} 
holds for every $k \geq 0$. Hence, for $r \in (0,R_0)$ there exists $k \geq 0$ such that 
$\tau^{k + 1}R_0 \leq r < \tau^kR_0$ and so,
by arguing as in \eqref{eq:VHcomparison2} and  using \eqref{eq:5:mainthmpf-2bis},
\begin{displaymath} 
\begin{aligned}
& \dashint_{B_r} \vert \bfV_{H^{-}_{r}}(D \bfu) - (\bfV_{H^{-}_r}(D \bfu))_r \vert^2 \, \mathrm{d}x 
\le \dashint_{B_r} \vert \bfV_{H^{-}_{r}}(D \bfu) - (\bfV_{k}(D \bfu))_{\tau^kR_0} \vert^2 \, \mathrm{d}x                                    \\ 
&\le 2 \dashint_{B_{r}}|\bfV_{H^{-}_{r}}(D \bfu) - \bfV_{k}(D \bfu) \vert^2 \, \mathrm{d}x  + 2 \dashint_{B_{r}} \vert \bfV_{k}(D \bfu) - (\bfV_{k}(D \bfu))_{\tau^kR_0} \vert^2 \, \mathrm{d}x \\ 
&\le  c \tau^{-n}(\tau^k R_0)^{\alpha_2} \dashint_{B_{\tau^kR_0}} \vert \bfV_{k}(D \bfu) \vert^2 \, \mathrm{d}x  + c\tau^{-n} \dashint_{B_{\tau^kR_0}} \vert \bfV_{k}(D \bfu) - (\bfV_{k}(D \bfu))_{\tau^kR_0} \vert^2 \, \mathrm{d}x \\
&\le  c (\delta_5^{-1} + 1) \tau^{-n} \dashint_{B_{\tau^kR_0}} \vert \bfV_{k}(D \bfu) - (\bfV_{k}(D \bfu))_{\tau^kR_0} \vert^2 \, \mathrm{d}x \\
&\leq 
     c \delta_3(\delta_5^{-1} + 1)\tau^{-n}   \tau^{2k\beta} 
     \leq c \delta_3(\delta_5^{-1}+1)\tau^{-n}    \left(\frac{r}{\tau R_0}\right)^{2\beta} \,.
\end{aligned}
\end{displaymath}
Therefore, 
we have 
\begin{equation}
\label{eq:5:mainthmpf-3}
     \dashint_{B_r} \frac{\vert \bfV_{H^-_{r}}(D\bfu) - (\bfV_{H^-_r}(D\bfu))_{y,B_r(y)} \vert^2}{r^{2\beta}} \, \mathrm{d}x
\leq \frac {c\delta_3 (\delta_5^{-1}+1)}{\tau^{n + 2\beta} R_0^{2\beta}}\,.
\end{equation}

We next suppose that \eqref{eq:5:mainthmpf-2} does not hold. Then, there exists $k_0 \geq 0$ such that
\begin{equation}
\label{eq:5:mainthmpf-4}
     \delta_5
     \dashint_{B_{\tau^k R_0}} \vert \bfV_{k}(D\bfu) \vert^2 \, \mathrm{d}x\\
\leq \dashint_{B_{\tau^k R_0}} \vert \bfV_{k}(D\bfu) - (\bfV_{k}(D\bfu))_{\tau^{k}R_0} \vert^2 \, \mathrm{d}x 
\end{equation}
for every $k = 0,\dots,k_0 - 1$ (when $k_0=0$, \eqref{eq:5:mainthmpf-4} is meaningless) and 
\begin{equation} 
\label{eq:5:mainthmpf-5}
  \dashint_{B_{\tau^{k_0}R_0}} \vert \bfV_{k_0}(D\bfu) - (\bfV_{k_0}(D\bfu))_{\tau^{k_0}R_0} \vert^2 \, \mathrm{d}x
< \delta_3
  \dashint_{B_{\tau^{k_0} R_0}} \vert \bfV_{k_0}(D\bfu) \vert^2 \, \mathrm{d}x\,.
\end{equation}
If $k_0 = 0$, in view of Lemma~\ref{lemma:5:iteration-nondegenerate} with $R = R_0$ and  
of \eqref{eq:5:mainthmpf-1}, for every 
$r \in (0,R_0)$ we have 
\begin{align*}
       \dashint_{B_r} \vert \bfV_{H^-_{r}}(D\bfu) - (\bfV_{H^-_{r}}(D\bfu))_{r} \vert^2 \, \mathrm{d}x 
& \leq c  \left( \frac {r}{R_0} \right)^{2\beta}
       \dashint_{B_{R_0}} \vert \bfV_{0}(D\bfu) - (\bfV_{0}(D\bfu))_{R_0} \vert^2 \, \mathrm{d}x        + 
       c r^{2\beta} \dashint_{B_{R_0}} \vert \bfV_0(D\bfu) \vert^2 \, \mathrm{d}x                     \\
& \leq c  \delta_3\left( \frac {r}{R_0} \right)^{2\beta} 
       + 
       c  r^{2\beta}(M + 1)
\end{align*} 
and so
\begin{equation} 
\label{eq:5:mainthmpf-6}
\dashint_{B_r}\frac{\vert\bfV_{H^-_{r}}(D\bfu) - (\bfV_{H^-_{r}}(D\bfu))_{r}  \vert^2}{r^{2\beta}}\,\mathrm{d}x\leq c\left(\frac{\delta_3}{R_0^{2\beta}}+M+1\right)\,.
\end{equation}
It remains the case when \eqref{eq:5:mainthmpf-4} and \eqref{eq:5:mainthmpf-5} hold 
for some $k_0 \geq 1$. For $r \in [\tau^{k_0}R_0,R_0)$, we obtain \eqref{eq:5:mainthmpf-3} 
by the very same argument already used when \eqref{eq:5:mainthmpf-2} holds. 
On the other hand, if $r \in (0,\tau^{k_0} R_0)$, 
by Lemma~\ref{lemma:5:iteration-nondegenerate} with $R = \tau^{k_0}R_0$ and 
\eqref{eq:5:mainthmpf-3} with $r = \tau^{k_0}R_0$, we have  
\begin{align*}
&      \dashint_{B_r} \vert \bfV_{H^-_{r}}(D\bfu) - (\bfV_{H^-_{r}}(D\bfu))_{r} \vert^2 \, \mathrm{d}x \\
& \qquad\quad 
  \leq c  \left( \frac {r}{\tau^{k_0}R_0} \right)^{2\beta} 
       \dashint_{B_{\tau^{k_0}R_0}} \vert \bfV_{k_0}(D\bfu) - (\bfV_{k_0}(D\bfu))_{\tau^{k_0}R_0} \vert^2 \, \mathrm{d}x 
       + 
       c  r^{2\beta} 
       \dashint_{B_{\tau^{k_0}R_0}} \vert \bfV_{k_0}(D\bfu) \vert^2 \, \mathrm{d}x                           \\
& \qquad\quad 
  \leq c \frac{\delta_3}{2\tau^{n + 2\beta}}
       \left( \frac {r}{R_0} \right)^{2\beta} 
       + 
       c  r^{2\beta} 
       \dashint_{B_{\tau^{k_0}R_0}} \vert \bfV_{k_0}(D\bfu) \vert^2 \, \mathrm{d}x\,.
\end{align*}
Moreover,
by arguing as in \eqref{eq:VHcomparison2} and \eqref{eq:5:mainthmpf-2bis} for $k=k_0-1$ and using   \eqref{eq:5:mainthmpf-4}  for $k=k_0-1$,
\begin{align*}
    \dashint_{B_{\tau^{k_0}R_0}} \vert \bfV_{k_0}(D\bfu) \vert^2 \, \mathrm{d}x 
    &\le  2 \dashint_{B_{\tau^{k_0}R_0}} \vert \bfV_{k_0}(D\bfu)-\bfV_{k_0-1}(D\bfu) \vert^2 \, \mathrm{d}x + 2 \dashint_{B_{\tau^{k_0}R_0}} \vert \bfV_{k_0-1}(D\bfu) \vert^2 \, \mathrm{d}x\\
& \leq c \tau^{-n}  \dashint_{B_{\tau^{k_0-1}R_0}} \vert \bfV_{k_0-1}(D\bfu) \vert^2 \, \mathrm{d}x   \\
& \leq c\tau^{-n}  \delta^{-1}_5 
       \dashint_{B_{\tau^{k_0 - 1}R_0}} 
       \vert \bfV_{k_0-1}(D\bfu) - (\bfV_{k_0-1}(D\bfu))_{\tau^{k_0 - 1}R_0} \vert^2 \, \mathrm{d}x                        \\
& \leq c \tau^{-n} \delta^{-1}_5 \delta_3.
       \smash[b]{\vphantom{\int}}
\end{align*}
Therefore, for every $r \in (0,R_0)$ we have 
\begin{equation}
\label{eq:5:mainthmpf-7}
     \dashint_{B_r} \frac{\vert \bfV_{H^-_r}(D\bfu) - (\bfV_{H^-_{r}}(D\bfu))_{r} \vert^2}{r^{2\beta}} \, \mathrm{d}x
\leq \frac{c \delta_3}{\tau^{n + 2\beta}R_0^{2\beta}} 
     + 
     c \tau^{-n}  \delta^{-1}_5 \delta_3. 
\end{equation}
Consequently, by \eqref{eq:5:mainthmpf-3}, \eqref{eq:5:mainthmpf-6} and 
\eqref{eq:5:mainthmpf-7} we conclude that the inequality 
\begin{displaymath} 
     \dashint_{B_r(x_0)}\frac{\vert \bfV_{H^-_{B_r(x_0)}}(D\bfu) - (\bfV_{H^-_{B_r(x_0)}}(D\bfu))_{x_0,r} \vert^2}{r^{2\beta}}\,\mathrm{d}x
\leq C
\end{displaymath} 
holds for every ball $B_r(x_0)$ with $x_0 \in B_{R_1}(x_1)$ and for every $r \in (0,R_0)$. Moreover, since  we have from \eqref{eq:linkVphi} that
$$\begin{aligned}
\vert \bfV_{H^-_r}(\bfP_1) - \bfV_{H^-_{r}}(\bfP_2) \vert^2
& \sim
 (H^-_{r})_{|\bfP_2|} (|\bfP_1-\bfP_2|)\\
& \sim  \vert \bfV_{p}(\bfP_1) - \bfV_{p}(\bfP_2) \vert^2 + a^-_{r} \vert \bfV_{q}(\bfP_1) - \bfV_{q}(\bfP_2) \vert^2 \\
&\ge \vert \bfV_{p}(\bfP_1) - \bfV_{p}(\bfP_2) \vert^2\,,
\end{aligned}
$$
the previous inequality together with \eqref{eq:equivalencebis} implies
\begin{displaymath} 
     \dashint_{B_r(x_0)}\frac{\vert \bfV_{p}(D\bfu) - (\bfV_{p}(D\bfu))_{x_0,r} \vert^2}{r^{2\beta}}\,\mathrm{d}x
\leq C\,.
\end{displaymath} 
Hence $\bfV_p(D\bfu)\in C^{0,\beta}(B_{R_1}(x_1);\R^{N\times n})$, and this concludes the proof.
\end{proof}

\section*{Acknowledgments }

G. Scilla and B. Stroffolini are members of Gruppo Nazionale per l'Analisi Matematica, la Probabilit\`a e le loro Applicazioni (GNAMPA) of INdAM.
B. Stroffolini has been supported by the International  agreement between Sogang University and University of Naples Federico II. J. Ok was supported by the National Research Foundation of Korea by the Korean
Government (NRF-2022R1C1C1004523).

\subsection*{Conflict of interest} The authors  declare no conflict of interest.

\subsection*{Data availability} Data sharing is not applicable to this article as obviously no datasets were generated or
analyzed during the current study.




\begin{thebibliography}{999}

\bibitem{Adams} R. A. Adams, \emph {Sobolev Spaces}, Academic Press, New York, 1975.




\bibitem{BBK23} S. Baasandorj, S. Byun, Sun-Sig and  W. Kim, 
{Self-improving properties of very weak solutions to double phase systems},
{\it Trans. Amer. Math. Soc.} {\bf 376} (2023), no. 12, 8733--8768.

\bibitem{BCM15} P. Baroni, M. Colombo and G. Mingione, {{Harnack inequalities for double phase functionals}} {\it Nonlinear Anal.} {\bf 121} (2015), 206--222.

\bibitem{BCM18} P. Baroni, M. Colombo and G. Mingione, {{Regularity for general functionals with double phase}}, {\it Calc. Var. Partial Differential Equations} {\bf 57} (2018), no. 2, Paper No. 62, 48 pp.

\bibitem{BeckSt13} L. Beck and B. Stroffolini, {Regularity results for differential forms solving degenerate elliptic systems},  
{\it Calc. Var. Partial Differential Equations}
{\bf 46} (2013), no. 3-4, 769--808.

\bibitem{Bo12} V. Bögelein, {Partial regularity for minimizers of discontinuous quasi-convex integrals with degeneracy},
{\it J. Differential Equations} {\bf 252} (2012), no. 2, 1052--1100. 

\bibitem{BoDuMin13} V. Bögelein, F. Duzaar and G. Mingione, {The regularity of general parabolic systems with degenerate diffusion},
{\it Mem. Amer. Math. Soc.}
{\bf 221} (2013), no. 1041, vi+143 pp.


\bibitem{ByunOh17}  S. Byun and J. Oh,
{Global gradient estimates for non-uniformly elliptic equations},
{\it Calc. Var. Partial Differential Equations} {\bf 56} (2017), no. 2, Paper No. 46, 36 pp.

\bibitem{BOS22} S. Byun, J. Ok and K. Song, {{Hölder regularity for weak solutions to nonlocal double phase problems}},
{\it J. Math. Pures Appl.} (9) {\bf 168} (2022), 110--142.

\bibitem{Bennett} C. Bennett and R. Sharpley, \emph{Interpolation of Operators}, vol. {\bf129} of Pure
and Applied Mathematics. Academic Press Inc., Boston, MA, (1988).








\bibitem{CeladaOk} P. Celada and J. Ok, {Partial regularity for non-autonomous degenerate quasi-convex functionals with general growth}, {\it  Nonlinear Anal.} {\bf 194} (2020), art. 111473. 

\bibitem{ColomboMingione15} M. Colombo and G. Mingione, {{Regularity for double phase variational problems}}, {\it Arch. Ration. Mech. Anal.} {\bf 215} (2015), no. 2, 443--496.

\bibitem{ColomboMingione151}
M. Colombo and G. Mingione, {{Bounded minimisers of double phase variational integrals}} {\it Arch. Ration. Mech. Anal.} {\bf 218} (2015), no. 1, 219--273.

\bibitem{ColomboMingione16}
M. Colombo and G. Mingione, {{Calderón-Zygmund estimates and non-uniformly elliptic operators}} {\it J. Funct. Anal.} {\bf 270} (2016), no. 4, 1416--1478.





\bibitem{cristianaMin20} C. De Filippis and G. Mingione,
{A borderline case of Calderón-Zygmund estimates for nonuniformly elliptic problems}, 
{\it St. Petersburg Math. J.} {\bf 31} (2020), no. 3, 455--477.


\bibitem{cristianaOh19} C. De Filippis and J. Oh,  {Regularity for multi-phase variational problems}, {\it J. Differential Equations} {\bf 267} (2019), no. 3, 1631--1670.

\bibitem{cristianaPalatucci19} C. De Filippis and G. Palatucci, 
{Hölder regularity for nonlocal double phase equations},
{\it J. Differential Equations}
{\bf 267} (2019), no. 1, 547--586.


\bibitem{DIEETT08}
L.~Diening and F.~Ettwein, Fractional estimates for non-differentiable elliptic systems with general growth, \emph{Forum Math.} \textbf{{\bf 20}} (2008), no.~3, 523--556.

\bibitem{DiKaSch} L. Diening, P. Kaplick\'y, and S. Schwarzacher, BMO estimates for the $p$-Laplacian, \emph{Nonlinear Anal.} {\bf 75} (2) (2012), 637 -- 650.




 


\bibitem{DIELENSTROVER12}
L. Diening, D. Lengeler, B. Stroffolini and A. Verde, { Partial regularity for minimizers of quasiconvex functionals with general growth}, {\it  SIAM J. Math. Anal.} {\bf 44}(5) (2012), 3594--3616.




\bibitem{DIESTROVER09} L.~Diening, B.~Stroffolini and A.~Verde, Everywhere regularity of functionals with $\phi$-growth, \emph{Manuscr. Math.} \textbf{{\bf 129}} (2009), no.~4, 449--481.


\bibitem{DIESTROVER10} L.~Diening, B.~Stroffolini, and A.~Verde, The $\varphi$-harmonic approximation and the regularity of
  $\varphi$-harmonic maps, \emph{J. Differential Equations} \textbf{{\bf 253}} (2012), no.~ 7, 1943--1958.

\bibitem{DUGRO00} F. Duzaar and J. F. Grotowski, {Optimal interior partial regularity for nonlinear elliptic systems: the method of A-harmonic approximation}, {\it Manuscripta Math.} {\bf 103} (2000), no. 3, 267--298. 

\bibitem{DUMIN04b} F.~Duzaar and G.~Mingione, Regularity for degenerate elliptic problems via $p$-harmonic approximation, \emph{Ann. Inst. H. Poincar\'{e} Anal. Non Lin\'{e}aire} \textbf{{\bf 21}} (2004), no.~5, 735--766.

\bibitem{DUMIN04} F.~Duzaar and G.~Mingione, The $p$-harmonic approximation and the regularity of $p$-harmonic maps, \emph{Calc. Var. Partial Differential Equations} \textbf{{\bf 20}} (2004), 235--256.

\bibitem{DUMIN09} F.~Duzaar and G.~Mingione, 
{Harmonic type approximation lemmas},
 {\it J. Math. Anal. Appl.} {\bf 352} (2009), no. 1, 301--335.

\bibitem{DUST02} F. Duzaar and K. Steffen, 
{Optimal interior and boundary regularity for almost minimizers to elliptic variational integrals},
{\it J. Reine Angew. Math.} {\bf 546} (2002), 73--138.








\bibitem{ELM04} L. Esposito, F. Leonetti and G.  Mingione, {Sharp regularity for functionals with (p,q) growth},
{\it J. Differential Equations} {\bf 204} (2004), no. 1, 5--55.


\bibitem{FMM04} I. Fonseca, J. Malý, and G. Mingione, 
{Scalar minimizers with fractal singular sets}, 
{\it Arch. Ration. Mech. Anal.} {\bf 172} (2004), no. 2, 295--307.

\bibitem{Gehring} F. W. Gehring, The $L^p$-integrability of the partial derivatives of a quasiconformal mapping, \emph{Acta Math.} {\bf 130} (1973), 265--277.

\bibitem{giustibook} E.~ Giusti, \emph{{Direct Methods in the Calculus of Variations}}, World Scientific, First Edition (2003), World Scientific. 

\bibitem{GoodScSt22} C. Goodrich, G. Scilla and B. Stroffolini, 
{Partial regularity for minimizers of discontinuous quasiconvex integrals with general growth,}
{\it Proc. Roy. Soc. Edinburgh Sect. A}
{\bf 152} (2022), no. 5, 1191--1232.




\bibitem{KKM23} W. Kim, J. Kinnunen and  K. Moring, {Gradient higher integrability for degenerate parabolic double-phase systems},
{\it Arch. Ration. Mech. Anal.} {\bf 247} (2023), no. 5, Paper No. 79, 46 pp.

\bibitem{Kras} M. A. Krasnosel'ski\v{i} and Ya. B. Ruticki\v{i}, \emph{Convex Functions and Orlicz Spaces}, P. Noordhoff LTD. - Groningen - The Netherlands, (1961).



\bibitem{Kufn} A. Kufner, O. John and S. Fuc\v{i}k, \emph{Function Spaces}, Springer Netherlands (1977).




\bibitem{Manfredi} J. Manfredi, {Regularity for minima of functionals with $p$-growth}, {\it J. Differential Equations} 76 (1988), no. 2, 203--212.

\bibitem{mingionedark} G. Mingione, {Regularity of minima: an invitation to the dark side of the calculus of variations}, {\it  Appl. Math. } 51 (4) (2006) 355--426.



\bibitem{Lie} G. M. Lieberman, 
{The natural generalization of the natural conditions of Ladyzhenskaya and Ural`tseva for elliptic equations},
{\it Comm. Partial Differential Equations} {\bf 16} (1991), no. 2-3, 311--361.

\bibitem{OK2017}  J. Ok, Regularity of $\omega$-minimizers for a class of functionals with non-standard growth, \emph{Calc. Var.} {\bf 56}:48 (2017).

\bibitem{OKJFA18} J. Ok, {{Partial H\"older regularity for elliptic systems with non-standard growth}}, {\it J. Funct. Anal.} {\bf 274} (3) (2018), 723--768.

\bibitem{OKNLA18} J. Ok, {Partial regularity for general systems of double phase type with continuous coefficients}. {\it Nonlinear Anal.} {\bf 177} (2018), 673--698.


\bibitem{OKNLA20} J. Ok, {Regularity for double phase problems under additional integrability assumptions}, 
{\it Nonlinear Anal.}
{\bf 194} (2020), 111408, 13 pp.

\bibitem{OkScSt24} J. Ok, G. Scilla and B. Stroffolini, {Partial regularity for degenerate parabolic systems with general growth via caloric approximations},
{\it Calc. Var. Partial Differential Equations}
{\bf 63} (2024), no. 4, Paper No. 105, 49 pp.



\bibitem{SciStroffofluids} G. Scilla and B. Stroffolini, {Partial regularity for steady double phase fluids}, {\it Math. Eng.} {\bf 5}(5) (2023), 1--47.


\bibitem{Tolksdorf} P. Tolksdorf, 
{Everywhere-regularity for some quasilinear systems with a lack of ellipticity},
{\it Ann. Mat. Pura Appl.} (4) {\bf 134} (1983), 241--266.

\bibitem{Uhlenbeck} K. Uhlenbeck, 
{Regularity for a class of non-linear elliptic systems},
{\it Acta Math.} {\bf 138} (1977), no. 3-4, 219--240.

\bibitem{Ural}
N. N. Ural`ceva, 
{Degenerate quasilinear elliptic systems},
{\it Zap. Naučn. Sem. Leningrad. Otdel. Mat. Inst. Steklov. (LOMI)} {\bf 7} (1968), 184--222.

\bibitem{Zhikov86} V. V. Zhikov, 
{Averaging of functionals of the calculus of variations and elasticity theory},
{\it Izv. Akad. Nauk SSSR Ser. Mat.} {\bf 50} (1986), no. 4, 675--710, 877.

\bibitem{Zhikov95} V. V. Zhikov, 
{On Lavrentiev's phenomenon},
{\it Russian J. Math. Phys.} {\bf 3} (1995), no. 2, 249--269.

\bibitem{Zhikov97} V. V. Zhikov, 
{On some variational problems},
{\it Russian J. Math. Phys.} {\bf 5} (1997), no. 1, 105--116 (1998).

\end{thebibliography}
\end{document}